\colorlet{Black}{black}
\newenvironment{tz}[1][]{%
                                \begin{tikzpicture}[baseline={([yshift=-.8ex]current bounding                        box.center)},#1] %
                                }{%
                        \end{tikzpicture} %
                        }
\DeclareRobustCommand{\SkipTocEntry}[5]{}
\tikzstyle{none}=[inner sep=0pt]
\tikzstyle{rn}=[circle,fill=Red,draw=Black,line width=0.8 pt]
\tikzstyle{gn}=[circle,fill=Lime,draw=Black,line width=0.8 pt]
\tikzstyle{bl}=[circle,fill=Blue,draw=Black,line width=0.8 pt]
\tikzstyle{simple}=[-,draw=Black,thick]
\tikzstyle{arrow}=[-,draw=Black,postaction={decorate},decoration={markings,mark=at position .5 with {\arrow{>}}},thick]
\tikzstyle{tick}=[-,draw=Black,postaction={decorate},decoration={markings,mark=at position .5 with {\draw (0,-0.1) -- (0,0.1);}},line width=2.000]
\def\thickness{0.7pt}
\tikzstyle{dot}=[circle, draw=black, fill=black, inner sep=.5ex, line width=\thickness, node on layer=foreground]
     \gdef\node@@on@layer{%
      \setbox\tikz@tempbox=\hbox\bgroup\pgfonlayer{#1}\unhbox\tikz@tempbox\endpgfonlayer\egroup}
\def\node@on@layer{\aftergroup\node@@on@layer}
\def\calign@preamble{%
   &\hfil\strut@
    \setboxz@h{\@lign$\m@th\displaystyle{##}$}%
    \ifmeasuring@\savefieldlength@\fi
    \set@field
    \hfil
    \tabskip\alignsep@
}
\let\cmeasure@\measure@
\patchcmd\cmeasure@{\divide\@tempcntb\tw@}{}{}{}
\patchcmd\cmeasure@{\divide\@tempcntb\tw@}{}{}{}
\patchcmd\cmeasure@{\ifodd\maxfields@
  \global\advance\maxfields@\@ne
  \fi}{}{}{}    
\newenvironment{calign}
{%
  \let\align@preamble\calign@preamble
  \let\measure@\cmeasure@
  \align
}
{%
  \endalign
}  
\tikzset{
    master/.style={
        execute at end picture={
            \coordinate (lower right) at (current bounding box.south east);
            \coordinate (upper left) at (current bounding box.north west);
        }
    },
    slave/.style={
        execute at end picture={
            \pgfresetboundingbox
            \path (upper left) rectangle (lower right);
        }
    }
}
\tikzset{blob/.style={draw, circle, fill=white, inner sep=1pt, minimum width=15pt, font=\scriptsize, line width=0.7pt}}
\tikzset{greenregion/.style={fill=green, fill opacity=0.3, draw=none}}
\tikzset{redregion/.style={fill=red, fill opacity=0.3, draw=none}}
\tikzset{blueregion/.style={fill=blue, fill opacity=0.3, draw=none}}
\tikzset{yellowregion/.style={fill=yellow, fill opacity=0.5, draw=none}}
\tikzset{cyanregion/.style={fill=cyan, fill opacity=0.3, draw=none}}
\tikzset{orangeregion/.style={fill=orange, fill opacity=0.6, draw=none}}
\tikzset{solidgreenregion/.style={fill=green!30, fill opacity=1, draw=none}}
\tikzset{solidredregion/.style={fill=red!30, fill opacity=1, draw=none}}
\tikzset{solidblueregion/.style={fill=blue!30, fill opacity=1, draw=none}}
\tikzset{solidyellowregion/.style={fill=yellow!30, fill opacity=1, draw=none}}
\tikzset{string/.style={line width=0.7pt}}
\tikzset{zig/.style={decoration={zigzag,segment length=3, amplitude=0.5}}}
\tikzset{bnd/.style={draw,string}}   
\tikzset{projector/.style={circle, draw, font=\scriptsize, inner sep=-5pt, minimum width=0.35cm, string, fill=white}}
\tikzset{dimension/.style={font=\scriptsize, inner sep=1pt}}
\tikzset{arrow data/.style 2 args={
      decoration={
         markings,
         mark=at position #1 with \arrow{#2}},
         postaction=decorate}
}
\tikzset{along path/.style={every path/.style={}, sloped, allow upside down}}
\def\zxnormal {
                \def \zxscale{0.55}
                \def\zxnodescale{0.8}
                \def\vertexscale{0.7}
                \def\zxshift{0.075cm}
                \def\hadscale{0.8}
                \def\trianglescale{1}
                \def\boxscale{1}
                }
\def\zxgreen{white}
\def\zxwhite{white}
\tikzset{front/.style ={node on layer=foreground}}
\tikzset{zx/.style = {string, scale=\zxscale}}
\tikzset{zxnode/.style n args={1}{blob,scale=\zxnodescale,fill=#1,node on layer=foreground}}
\tikzset{box/.style={draw, rectangle, fill=white, inner sep=1pt, minimum width=10pt,minimum height=10pt, font=\scriptsize, line width=0.7pt,scale=\zxnodescale,node on layer=foreground}}
\tikzset{boxvertex/.style={draw, rectangle, fill=white, line width=0.733pt,scale=0.75*\vertexscale}}
\tikzset{bigbox/.style={draw, rectangle, fill=white,  minimum width=\boxscale *18pt,minimum height=\boxscale*8pt, line width=0.7pt,scale=\zxnodescale}}
\newlength{\unitbox}
\tikzset{widebox/.style ={draw,rectangle, fill=white, line width=0.7pt,scale=0.75*\zxnodescale,minimum height=15pt,inner sep=1pt,  minimum width = \unitbox,   anchor=center }}
\tikzset{wideboxm/.style n args={1}{draw,rectangle, fill=white, line width=0.7pt,scale=0.75*\zxnodescale,minimum height=15pt,inner sep=1pt,  minimum width =2\unitbox+#1\unitbox,   anchor=center }}
\tikzset{triangleup/.style n args={1}{draw, shape=isosceles triangle, isosceles triangle stretches, fill=white, line width=0.7pt,scale=0.75*\zxnodescale,minimum height=15pt,inner sep=1pt,  minimum width = #1*\trianglescale cm +0.15*\trianglescale cm,  shape border rotate=90, anchor=south }}
\tikzset{triangledown/.style n args={1}{draw, shape=isosceles triangle, isosceles triangle stretches, fill=white, line width=0.7pt,scale=0.75*\zxnodescale,minimum height=15pt,inner sep=1pt,  minimum width = #1*\trianglescale cm +0.15*\trianglescale cm,  shape border rotate=-90, anchor=north }}
\tikzset{zxvertex/.style n args={1}{draw,fill=#1,circle,line width=0.7pt,scale=0.75*\vertexscale}}
\tikzset{zxdown/.style={yshift=-\zxshift}}
\tikzset{zxup/.style={yshift=\zxshift}}
\newcommand\mult[3]{ 
\draw[string] (#1.center) to [out=up, in=-135] +(0.5*#2,#3) to [out=-45, in=up] +(0.5*#2,-#3);
\node[zxvertex=\zxgreen,zxdown] at ($(#1)+(0.5*#2,#3)$){};
}
\newcommand\comult[3]{ 
\draw[string] (#1.center) to [out=down, in=135] +(0.5*#2,-#3) to [out=45, in=down] +(0.5*#2,#3);
\node[zxvertex=\zxgreen,zxup] at ($(#1) +(0.5*#2,-#3)$){};}
\newcommand\unit[2]{ 
\draw[string] (#1.center) to + (0, -#2);
\node[zxvertex=\zxgreen] at ($(#1) +(0,-#2)$){};
}
\newcommand{\Tr}{\mathrm{Tr}}
\renewcommand{\to}[1][]{\ensuremath{\xrightarrow{#1}}}
\theoremstyle{plain} 
\newtheorem{theorem}{Theorem}[section]
\newtheorem{lemma}[theorem]{Lemma}
\newtheorem{corollary}[theorem]{Corollary}          
\newtheorem{proposition}[theorem]{Proposition}
\theoremstyle{definition} 
\newtheorem{definition}[theorem]{Definition}
\newtheorem{remark}[theorem]{Remark}
\theoremstyle{remark}  
\newtheoremstyle{special_statement} 
        {\topskip}
        {\topskip}
        {\addtolength{\leftskip}{2.5em} \itshape }
        {}
        {\bfseries}
        {:}
        {.5em}
        {}
\theoremstyle{special_statement}
\DeclareMathOperator{\Hom}{Hom}
\DeclareMathOperator{\End}{End}
\DeclareMathOperator{\Fun}{Fun}
\newcommand{\id}{\mathrm{id}}
\newcommand{\Rep}{\mathrm{Rep}}
\newcommand{\Hilb}{\ensuremath{\mathrm{Hilb}}}
\newcommand{\Obj}{\ensuremath{\mathrm{Obj}}}
\DeclareFontFamily{OMX}{MnSymbolE}{}
\DeclareSymbolFont{MnLargeSymbols}{OMX}{MnSymbolE}{m}{n}
\DeclareFontShape{OMX}{MnSymbolE}{m}{n}{
    <-6>  MnSymbolE5
   <6-7>  MnSymbolE6
   <7-8>  MnSymbolE7
   <8-9>  MnSymbolE8
   <9-10> MnSymbolE9
  <10-12> MnSymbolE10
  <12->   MnSymbolE12
}{}
\DeclareFontShape{OMX}{MnSymbolE}{b}{n}{
    <-6>  MnSymbolE-Bold5
   <6-7>  MnSymbolE-Bold6
   <7-8>  MnSymbolE-Bold7
   <8-9>  MnSymbolE-Bold8
   <9-10> MnSymbolE-Bold9
  <10-12> MnSymbolE-Bold10
  <12->   MnSymbolE-Bold12
}{}
\let\llangle\@undefined
\let\rrangle\@undefined
\DeclareMathDelimiter{\llangle}{\mathopen}%
                     {MnLargeSymbols}{'164}{MnLargeSymbols}{'164}
\DeclareMathDelimiter{\rrangle}{\mathclose}%
                     {MnLargeSymbols}{'171}{MnLargeSymbols}{'171}
\newcounter{DRcomment}
\newcounter{DVcomment}
\newcounter{BMcomment}
\newcounter{JVcomment}
\newcommand\ignore[1]{}
\tikzstyle{blackdot}=[circle, draw=black, fill=black, inner sep=.5ex, line width=\thickness, node on layer=foreground]
\tikzstyle{whitedot}=[circle, draw=black, fill=white, inner sep=.5ex, line width=\thickness, node on layer=foreground]
\tikzset{proofdiagram/.style={scale=1}}
\newlength\morphismheight
\newlength\minimummorphismwidth
\newlength\stateheight
\title{Unitary pseudonatural transformations}
\author{Dominic Verdon}
\date{School of Mathematics \\ University of Bristol \\[2ex]
\today}
\begin{document}

\normalsize
\zxnormal
\maketitle      
\abstract{We suggest two approaches to a definition of unitarity for pseudonatural transformations between unitary pseudofunctors on pivotal dagger 2-categories. The first is to require that the 2-morphism components of the transformation be unitary. The second is to require that the dagger of the transformation be equal to its inverse. We show that the `inverse' making these definitions equivalent is the right dual of the transformation in the 2-category $\Fun(\mathcal{C},\mathcal{D})$ of pseudofunctors $\mathcal{C} \to \mathcal{D}$, pseudonatural transformations, and modifications. We show that the subcategory $\Fun_u(\mathcal{C},\mathcal{D}) \subset \Fun(\mathcal{C},\mathcal{D})$ whose objects are unitary pseudofunctors and whose 1-morphisms are unitary pseudonatural transformations is a pivotal dagger 2-category. We apply these results to obtain a Morita-theoretical classification of unitary pseudonatural transformations between fibre functors on the category of representations of a compact quantum group.}

\section{Introduction}

\subsection{Overview} Natural transformations between functors are a crucial element of category theory. Let $\mathcal{C},\mathcal{D}$ be categories and $F,F': \mathcal{C} \to \mathcal{D}$ be functors. We say that a natural transformation $\alpha: F \to F'$ is \emph{invertible} if its components $\{\alpha_X\}_{X \in \Obj(\mathcal{C})}$ are invertible in $\mathcal{D}$. If $\mathcal{D}$ is a dagger category, then we say that an invertible natural transformation is \emph{unitary} if its components are additionally unitary in $\mathcal{D}$.

Perhaps more naturally, these notions of invertibility may be defined with respect to the category $\Fun(\mathcal{C},\mathcal{D})$ of functors and natural transformations. An invertible natural transformation is just an invertible morphism in this category. If $\mathcal{C}, \mathcal{D}$ are dagger categories, the subcategory of $\Fun(\mathcal{C},\mathcal{D})$ whose objects are unitary functors inherits a dagger structure; a unitary natural transformation is a unitary morphism in this dagger category. 

Just as natural transformations between functors are an important part of category theory, pseudonatural transformations between pseudofunctors are an important part of 2-category theory, which includes monoidal category theory. In this work we consider the generalisation of the aforementioned notions of invertibility to pseudonatural transformations.\footnote{We remark that our results about duality generalise straightforwardly to oplax natural transformations, although for applications we did not require this level of generality.}

Let $\mathcal{C},\mathcal{D}$ be 2-categories, and let $\Fun(\mathcal{C},\mathcal{D})$ be the 2-category of pseudofunctors $\mathcal{C} \to \mathcal{D}$, pseudonatural transformations and modifications. We consider invertibility of a pseudonatural transformation as a 1-morphism in $\Fun(\mathcal{C},\mathcal{D})$.

We could consider equivalences in $\Fun(\mathcal{C},\mathcal{D})$. However, we find that this notion of invertibility is too strong for our purposes. A weaker notion of invertibility of a 1-morphism in a 2-category is duality, a.k.a. adjunction. A 2-category is said to `have right (resp. left) duals' when every 1-morphism has a chosen right (resp. left) dual. A coherent choice of left and right duals for every object is called a  pivotal structure; a 2-category with a pivotal structure is called \emph{pivotal}.
Here we unpack the notion of duality for pseudonatural transformations (Definition~\ref{def:dualpnt}) and show the following facts.
\begin{itemize}
\item If $\mathcal{C}$ has left (resp. right) duals and $\mathcal{D}$ has right (resp. left) duals, then $\Fun(\mathcal{C},\mathcal{D})$ has right (resp. left) duals (Corollary~\ref{cor:dualsexistence}).
\item If $\mathcal{C}, \mathcal{D}$ are pivotal, then $\Fun_{p}(\mathcal{C},\mathcal{D})$ is also pivotal, where the subscript $p$ represents restriction to pivotal functors. (Proposition~\ref{prop:pivinduced}).
\end{itemize}
If the 2-categories $\mathcal{C},\mathcal{D}$ additionally have a dagger structure, we restrict $\Fun(\mathcal{C},\mathcal{D})$ to unitary pseudofunctors. We now consider the notion of \emph{unitarity} of a pseudonatural transformation. This consideration is motivated either physically, by the desire that the components of the transformation should be unitary in $\mathcal{D}$; or categorically, by the desire that the 2-category $\Fun(\mathcal{C},\mathcal{D})$ should itself inherit a dagger structure (for general pseudonatural transformations, there is no obvious dagger structure on $\Fun(\mathcal{C},\mathcal{D})$). 

We could say that a pseudonatural transformation is unitary when all its 2-morphism components are unitary in $\mathcal{D}$. This is our first definition of a \emph{unitary pseudonatural transformation}. However, the more categorically natural way of specifying unitarity of a pseudonatural transformation is to say that its \emph{dagger} is equal to its inverse (i.e. its right dual). When $\mathcal{C},\mathcal{D}$ are pivotal dagger (i.e. possessing compatible pivotal and dagger structures), we observe that there is a notion of the dagger of a pseudonatural transformation such that the following definitions of a unitary pseudonatural transformation are equivalent (Lemma~\ref{lem:unitaritydefsequiv}):
\begin{itemize} 
\item A pseudonatural transformation all of whose 2-morphism components are unitary. 
\item A pseudonatural transformation whose right dual is equal to its dagger.
\end{itemize}
Let $\Fun_u(\mathcal{C},\mathcal{D}) \subset \Fun(\mathcal{C},\mathcal{D})$ be the subcategory whose objects are unitary pseudofunctors and whose 1-morphisms are unitary pseudonatural transformations. The category $\Fun_u(\mathcal{C},\mathcal{D})$ inherits a dagger structure from $\mathcal{D}$. Moreover, pivotality comes `for free', with no need to restrict to pivotal functors.
\begin{itemize}
\item Let $\mathcal{C},\mathcal{D}$ be pivotal dagger categories. Then the category $\Fun_u(\mathcal{C},\mathcal{D})$ is a pivotal dagger category. (Theorem~\ref{thm:funcddagger}.)
\end{itemize}
Our main motivation for this work is the study of unitary pseudonatural transformations between fibre functors on representation categories of compact quantum groups, which are the subject of the paper~\cite{Verdon2020}. In particular, the results in this paper allow us to classify fibre functors and unitary pseudonatural transformations between them in terms of Morita theory in the 2-category $\Fun_u(\Rep(G),\Hilb)$. To show this we prove a more general result (Theorem~\ref{thm:starisoclass}) which relates equivalence classes of 1-morphisms out of an object in a pivotal dagger 2-category to unitary $*$-isomorphism classes of special Frobenius algebras in its endomorphism category.

\subsection{Acknowledgements}
The author thanks Ashley Montanaro, David Reutter, Changpeng Shao and Jamie Vicary for useful discussions related to this work, and is grateful to anonymous referees for many useful comments. The work was supported by EPSRC.

\subsection{Structure}
In Section~\ref{sec:background} we introduce necessary background material for the rest of this paper. In Section~\ref{sec:pnts} we recall the basic theory of pseudonatural transformations. In Section~\ref{sec:duals} we discuss dualisability of pseudonatural transformations. In Section~\ref{sec:unitary} we consider unitary pseudonatural transformations. In Section~\ref{sec:apps} we consider an application of our results to the study of fibre functors on representation categories of compact quantum groups.

\section{Background: Pivotal dagger 2-categories}\label{sec:background}

\subsection{Diagrams for 2-categories}
\ignore{A 2-category is a generalisation of a  category. While a category has objects, morphisms, and composition laws, a 2-category has objects, morphisms, and morphisms between the morphisms, called 2-morphisms, obeying composition laws. The general `weak' definition of 2-category can be found in e.g.~\cite{Leinster1998}. Roughly, a 2-category $\mathcal{C}$ is defined by a set of objects of objects $r,s,\dots$, together with a category of morphisms $\mathcal{C}(r,s)$ for every pair of objects, and functors $\mathcal{C}(r,s) \times \mathcal{C}(s,t) \to \mathcal{C}(r,t)$ defining composition of these Hom-categories, with various coherence data.
2-categories are much more manageable than the general definition might suggest.}
Recall that every monoidal category is equivalent to a strict monoidal category~\cite{MacLane1963}. This allows us to assume our monoidal categories are strict, allowing the use of a convenient and well-known diagrammatic calculus~\cite{Selinger2010}.
In 2-category theory, a similar strictification result holds --- every weak 2-category is equivalent to a strict 2-category~\cite{Leinster1998}. We can therefore also use a diagrammatic calculus in this case. 

A monoidal category is precisely a 2-category with a single object, where 1-morphisms are the `objects' of the monoidal category, 2-morphisms are the `morphisms', and composition of 1-morphisms is the `monoidal product'. The 2-categorical diagrammatic calculus is nothing more than the diagrammatic calculus for monoidal categories enhanced with region labels. We briefly summarise this calculus now, closely following the exposition in~\cite{Marsden2014}. More information can be found in e.g.~\cite{Hummon2012}.

Objects $r,s, \cdots$ of a 2-category are represented by labelled two-dimensional regions of a planar diagram:
\begin{calign}\nonumber
\includegraphics{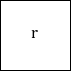}
\end{calign}
The 1-morphisms $X: r \to s$ are represented by edges, separating the region $r$ on the left from the region $s$ on the right:
\begin{calign}\nonumber
\includegraphics{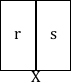}
\end{calign}
Edges corresponding to identity 1-morphisms $\id_r: r \to r$ are invisible in the diagrammatic calculus. 

The 1-morphisms compose from left to right. That is, for 1-morphisms $X:r \to s, Y: s \to t$, the composite $X \otimes Y: r \to t$ is represented as:
\begin{calign}\nonumber
\includegraphics{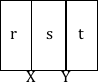}
\end{calign}
For two parallel 1-morphisms $X, Y: r \to s$, a  2-morphism $\alpha: X \to Y$ is represented by a vertex in the diagram, drawn as a box:
\begin{calign}\nonumber
\includegraphics{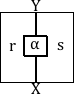}
\end{calign}
The 2-morphisms can compose in two ways, depending on their type. For parallel 1-morphisms $X,Y,Z: r \to s$, 2-morphisms $\alpha:X \to Y, \beta: Y \to Z$ can be composed `vertically' to obtain a 2-morphism $\beta \circ \alpha: X \to Z$. This is represented by vertical juxtaposition in the diagram:
\begin{calign}\nonumber
\includegraphics{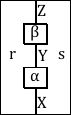}
\end{calign}
For 1-morphisms $X,X': r \to S$ and $Y,Y': s \to t$, 2-morphisms $\alpha: X \to X'$ and $\beta: Y \to Y'$ can be composed `horizontally' to obtain a 2-morphism $\alpha \otimes \beta: X \otimes Y \to X' \otimes Y'$. This is represented by horizontal juxtaposition in the diagram:
\begin{calign}\nonumber
\includegraphics{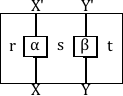}
\end{calign}
As with 1-morphisms, the identity 2-morphisms $\id_X: X \to X$ are invisible in the diagrammatic calculus.

All 2-categories satisfy the \emph{interchange law}. For any 1-morphisms $X,X',X'': r \to s$ and $Y,Y',Y'': s \to t$, and 2-morphisms $\alpha: X \to X'$, $\alpha': X' \to X''$, $\beta: Y \to Y',\beta':Y' \to Y''$:
$$
(\alpha' \circ \alpha) \otimes (\beta' \circ \beta) = (\alpha' \otimes \beta') \circ (\alpha \otimes \beta)
$$
This corresponds to the following diagram having an unambiguous interpretation as a 2-morphism:   
\begin{align}\label{eq:interchange}
\includegraphics[valign=c]{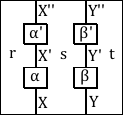}
\end{align}
We also have the following \emph{sliding equalities}, which may be obtained by taking some morphisms to be the identity in~\eqref{eq:interchange}:
\begin{align*}\nonumber
\includegraphics[valign=c]{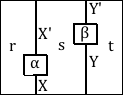}
~~=~~
\includegraphics[valign=c]{Figures/svg/2cats/2morphismhcomp.pdf}
~~=~~
\includegraphics[valign=c]{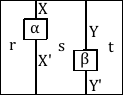}
\end{align*}
These equalities allow us to move 2-morphism boxes past each other provided there are no obstructions. 

Before moving onto pseudofunctors, we give a first definition from 2-category theory. Equivalence is a strong notion of invertibility of a 1-morphism in a 2-category. From now on we will not draw an enclosing box around diagrams. 
\begin{definition}\label{def:equiv}
Let $\mathcal{C}$ be a 2-category and let $X: r \to s$ be a 1-morphism in $\mathcal{C}$. We say that $X$ is an \emph{equivalence} if there exists a 1-morphism $X^{-1}: s \to r$, and invertible\footnote{I.e. invertible in the Hom-categories $\mathcal{C}(r,r)$ and $\mathcal{C}(s,s)$. We sometimes call an invertible 2-morphism a \emph{2-isomorphism}.} 2-morphisms $\alpha: \id_r \to X \otimes X^{-1}$ and $\beta: \id_s \to X^{-1} \otimes X$. In diagrams, the equations for invertibility of $\alpha, \beta$ are as follows, where $\alpha^{-1},\beta^{-1}$ are the inverse 2-morphisms:
\begin{align}\label{eq:equivalence}
\includegraphics[valign=c]{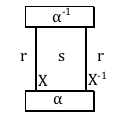}
~~=
\includegraphics[valign=c]{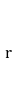},
&&
\includegraphics[valign=c]{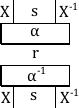}
=
\includegraphics[valign=c]{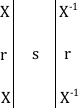},
&&
\includegraphics[valign=c]{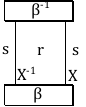}
=
\includegraphics[valign=c]{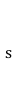},
&&
\includegraphics[valign=c]{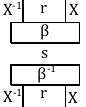}
=
\includegraphics[valign=c]{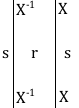}
\end{align}
If there exists an equivalence $X: r \to  s$ we say that the objects $r$ and $s$ are \emph{equivalent} in $\mathcal{C}$.
\end{definition}
\subsection{Diagrams for pseudofunctors}
While our 2-categories are strictified, allowing us to use the diagrammatic calculus, we will consider functors between them which are not strict. For this, we use a graphical calculus of \emph{functorial boxes} previously applied in the special case of  monoidal functors~\cite{Mellies2006}.
\begin{definition}
Let $\mathcal{C}, \mathcal{D}$, be 2-categories. A \emph{pseudofunctor} $F: \mathcal{C} \to \mathcal{D}$ consists of the following data.
\begin{itemize}
\item For each object $r$ of $\mathcal{C}$, an object $F(r)$ of $\mathcal{D}$.
\item For each hom-category $\mathcal{C}(r,s)$ of $\mathcal{C}$, a functor $F_{r,s}: \mathcal{C}(r,s) \to \mathcal{D}(F(r),F(s))$.

In the graphical calculus, we represent the effect of the functor $F_{r,s}$ by drawing a shaded box around 1- and 2-morphisms in $\mathcal{C}(r,s)$. For example, $X,Y: r \to s$ be 1-morphisms and  $f: X \to Y$ a 2-morphism in $\mathcal{C}$. Then the 2-morphism $F(f): F(X) \to F(Y)$ in $\mathcal{D}(F(r), F(s))$ is represented as:
\begin{calign}\nonumber
\includegraphics[scale=0.8,valign=c]{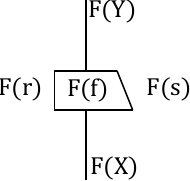}
~~=~~
\includegraphics[scale=0.8,valign=c]{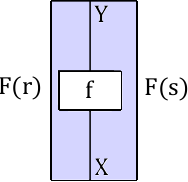}
\end{calign}
\item For every pair of composable 1-morphisms $X:r \to s$, $Y: s \to t$ of $\mathcal{C}$, an invertible \emph{multiplicator} 2-morphism $m_{X,Y}:F(X) \otimes_{\mathcal{D}} F(Y) \to F(X \otimes_{\mathcal{C}} Y)$. In the graphical calculus, these 2-morphisms and their inverses are represented as follows:
\begin{calign}\label{eq:multiplicator}
\includegraphics[scale=.8,valign=c]{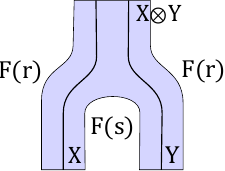}
&
\includegraphics[scale=.7,valign=c]{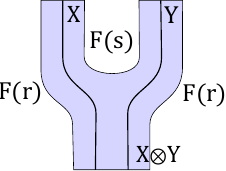}
\\\nonumber
m_{X,Y}: F(X) \otimes_{\mathcal{D}}  F(Y) \to F(X \otimes_{\mathcal{C}} Y) & m_{X,Y}^{-1}: F(X \otimes_{\mathcal{C}} Y) \to F(X) \otimes_{\mathcal{D}}  F(Y)
\end{calign}
\item For every object $r$ of $\mathcal{C}$, an invertible `unitor' 2-morphism $u_r: \id_{F(r)} \to F(\id_{r})$. In the diagrammatic calculus, these 2-morphism and their inverses are represented as follows (recall that identity 1-morphisms are invisible):
\begin{calign}\label{eq:unitor}
\includegraphics[scale=.8,valign=c]{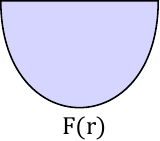}
&
\includegraphics[scale=.8,valign=c]{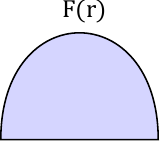} \\\nonumber
u_r: \id_{F(r)} \to F(\id_{r}) & u_r^{-1}: F(\id_{r})  \to \id_{F(r)}
\end{calign}
\end{itemize}
The multiplicators and unitor must obey the following coherence equations:
\begin{itemize}
\item \emph{Naturality}. For any objects $r,s,t$, 1-morphisms $X,X': r \to s$, $Y,Y': s \to t$, and 2-morphisms $f:X \to X', g: Y \to Y'$ in $\mathcal{C}$:
\begin{calign}\label{eq:psfctnat}
\includegraphics[scale=0.8,valign=c]{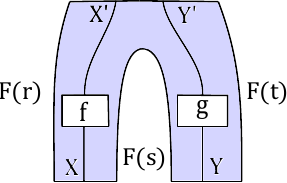}
~~=~~
\includegraphics[scale=0.8,valign=c]{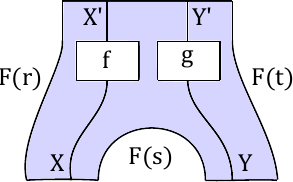}
\end{calign}
\item \emph{Associativity}. For any objects $r,s,t,u$ and 1-morphisms $X:r \to s$, $Y: s \to t$, $Z: t \to u$ of $\mathcal{C}$:
\begin{calign}\label{eq:psfctassoc}
\includegraphics[scale=0.8,valign=c]{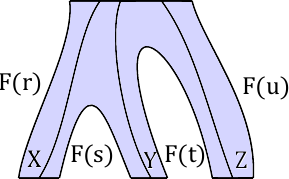}
~~=~~
\includegraphics[scale=0.8,valign=c]{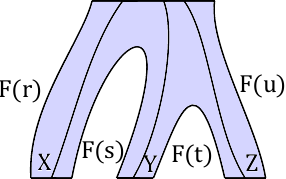}
\end{calign}
\item \emph{Unitality}. For any objects $r,s$ and 1-morphism $X:r \to s$ of $\mathcal{C}$:
\begin{calign}\label{eq:psfctunital}
\includegraphics[scale=0.8,valign=c]{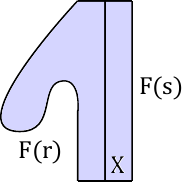}
~~=~~
\includegraphics[scale=0.8,valign=c]{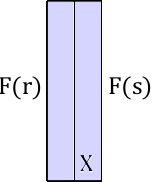}
~~=~~
\includegraphics[scale=0.8,valign=c]{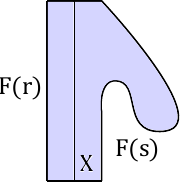}
\end{calign}
\end{itemize}
We say that a pseudofunctor $F: \mathcal{C} \to \mathcal{D}$ is an \emph{equivalence} if every object in $\mathcal{D}$ is equivalent to an object in the image of $F$ (Definition~\ref{def:equiv}) and the functors $F_{r,s}: \mathcal{C}(r,s) \to \mathcal{D}(r,s)$ are equivalences.
\end{definition}
\noindent
We remark that the analogous \emph{conaturality}, \emph{coassociativity} and \emph{counitality} equations for the inverses $\{m_{X,Y}^{-1}\},\{u_r^{-1}\}$, obtained by reflecting~(\ref{eq:psfctnat}-\ref{eq:psfctunital}) in a horizontal axis, are already implied by~(\ref{eq:psfctnat}-\ref{eq:psfctunital}). To give some idea of the calculus of functorial boxes, we explicitly prove the following lemma and proposition. From now on we will unclutter the diagrams by omitting region and 1-morphism labels, unless adding the labels seems to significantly aid comprehension.
\begin{lemma}\label{lem:pushpast}
For any objects $r,s,t,u$ and 1-morphisms $X: r \to s$, $Y: s \to t$, $Z: t \to u$, the following equations are satisfied:
\begin{calign}\nonumber
\includegraphics[scale=1,valign=c]{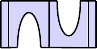}
~~=~~
\includegraphics[scale=1,valign=c]{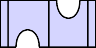}
&
\includegraphics[scale=1,valign=c]{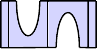}
~~=~~
\includegraphics[scale=1,valign=c]{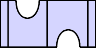}
\end{calign}
\end{lemma}
\begin{proof}
We prove the left equation; the right equation is proved similarly. 
\begin{calign}\nonumber
\includegraphics[scale=1,valign=c]{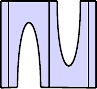}
~~=~~
\includegraphics[scale=1,valign=c]{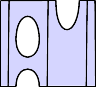}
~~=~~
\includegraphics[scale=1,valign=c]{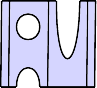}
~~=~~
\includegraphics[scale=1,valign=c]{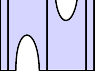}
\end{calign}
Here the first and third equalities are by invertibility of $m_{X,Y}$, and the second is by coassociativity.
\end{proof}
\noindent
With Lemma~\ref{lem:pushpast}, the equations~(\ref{eq:psfctnat}-\ref{eq:psfctunital}) are sufficient to deform functorial boxes topologically as required.  From now on we will do this mostly without comment.

\subsection{Pseudonatural transformations}\label{sec:pnts}
We now recall the definition of a pseudonatural transformation between pseudofunctors~\cite{Leinster1998}.
\begin{definition}\label{def:pnt}
Let $\mathcal{C},\mathcal{D}$ be 2-categories, and let $F,G: \mathcal{C} \to \mathcal{D}$ be pseudofunctors (depicted by blue and red boxes respectively). A \emph{pseudonatural transformation} $\alpha: F \to G$ is defined by the following data:
\begin{itemize}
\item For every object $r$ of $\mathcal{C}$, a 1-morphism $\alpha_r:F(r) \to G(r)$ of $\mathcal{D}$ (drawn as a green wire). 
\item For every 1-morphism $X:r \to s$ of $\mathcal{C}$, an invertible 2-morphism $\alpha_X: F(X) \otimes \alpha_s \to \alpha_r \otimes G(X)$ (drawn as a white vertex):
\begin{calign}\label{eq:pntdef}
\includegraphics[valign=c]{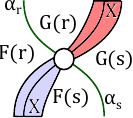}
\end{calign}
\end{itemize}
The 1-morphisms $\alpha_X$ must satisfy the following conditions:
\begin{itemize}
\item \emph{Naturality}. For every 2-morphism $f: X \to Y$ in $\mathcal{C}$: 
\begin{calign}\label{eq:pntnat}
\includegraphics[valign=c]{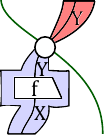}
~~=~~
\includegraphics[valign=c]{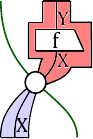}
\end{calign}
\item \emph{Monoidality.} \begin{itemize}\item For every pair of 1-morphisms $X: r \to s, Y: s \to t$ in $\mathcal{C}$:
\begin{calign}\label{eq:pntmon}
\includegraphics[scale=.9,valign=c]{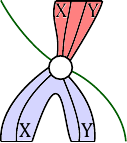}
~~=~~
\includegraphics[scale=.9,valign=c]{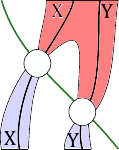}
\end{calign}
\item For every object $r$ of $\mathcal{C}$:
\begin{calign}\label{eq:pntmonunit}
\includegraphics[scale=1,valign=c]{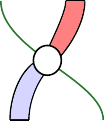}
~~=~~
\includegraphics[scale=1.25,valign=c]{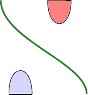}
\end{calign}
\end{itemize}
(Equation~\eqref{eq:pntmon} already implies the analogous pullthroughs for the comultiplicators $\{m_{X,Y}^{-1}\}$.)
\end{itemize}
If $\alpha_r = \id_{F(r)}$ for every object $r$ of $\mathcal{C}$, we say that $\alpha$ is an \emph{invertible icon}~\cite{Lack2010}.  In particular, if $\mathcal{C}, \mathcal{D}$ are one-object 2-categories and $\alpha$ is an invertible icon, we recover the standard notion of monoidal natural isomorphism. 
\end{definition}
\begin{remark}
The results we will prove in Section~\ref{sec:duals} extend to oplax natural transformations (i.e. where the 2-morphism components are not invertible). However, we did not need this level of generality for applications.
\end{remark}
\ignore{
\begin{remark}
There is an unavoidable choice of convention in the definition of a pseudonatural transformation regarding whether the $A$-wire should go from left to right or from right to left.
\end{remark}}
\noindent
Pseudonatural transformations $\alpha: F \to G$ and $\beta: G \to H$ can be composed associatively. We define $\alpha \otimes \beta: F \to H$ as follows.
\begin{itemize}
\item For every object  $r$ of $\mathcal{C}$, $(\alpha \otimes \beta)_r := \alpha_r \otimes \beta_r$. 
\item For any 1-morphism $X: r \to s$ of $\mathcal{C}$, $(\alpha \otimes \beta)_X$ is defined as the following composite (we colour the $\beta$-wire orange, and the $H$-box brown):
\begin{calign}
\includegraphics[valign=c]{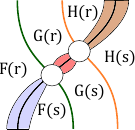}
\end{calign}
\end{itemize}
There are also morphisms between pseudonatural transformations, known as \emph{modifications}~\cite{Leinster1998}.
\begin{definition}
Let $\alpha, \beta: F \Rightarrow G$ be pseudonatural transformations between pseudofunctors $F,G: \mathcal{C} \to \mathcal{D}$. (We colour the $\alpha$-wire green and the $\beta$-wire orange.) A \emph{modification} $f: \alpha \to \beta$ is defined by the following data:
\begin{itemize}
\item For every object $r$ of $\mathcal{C}$, a 2-morphism $f_r: \alpha_r \to \beta_r$ in $\mathcal{D}$, such that the 2-morphisms $\{f_r\}$ satisfy the following equation for all 1-morphisms $X:r \to s$ in $\mathcal{C}$:
\begin{calign}
\includegraphics[valign=c]{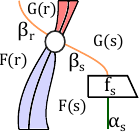}
~~=~~
\includegraphics[valign=c]{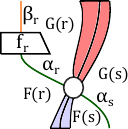}
\end{calign}
\end{itemize}
\end{definition} 
\noindent
Modifications can themselves be composed horizontally and vertically in an obvious way. Altogether, this compositional structure is again a 2-category.
\begin{definition}
Let $\mathcal{C},\mathcal{D}$ be 2-categories. The 2-category $\Fun(\mathcal{C},\mathcal{D})$ is defined as follows:
\begin{itemize}
\item Objects: pseudofunctors $F,G,\dots,\cdot: \mathcal{C} \to \mathcal{D}$.
\item 1-morphisms: pseudonatural transformations $\alpha,\beta,\dots: F \to G$.
\item 2-morphisms: modifications $f, g,\dots: \alpha \to \beta$.
\end{itemize}
As we are assuming that $\mathcal{C}$ and $\mathcal{D}$ are strict, strictness of $\Fun(\mathcal{C},\mathcal{D})$ follows.
\ignore{
By Definition~\ref{}, the vertical composition $\mu \circ \lambda$ of modifications $\lambda: (\alpha,A) \to (\beta, B)$ and $\mu: (\beta,B) \to (\delta, D)$ is
$$DRAW$$ 
while the horizontal composition of modifications $\lambda: (\alpha,A) \to (\beta, B)$
}
\end{definition}
\noindent

\subsection{Pivotal 2-categories}

In a 2-category the most general notion of invertibility of a 1-morphism is \emph{duality}, also known as \emph{adjunction}.
\begin{definition}\label{def:duals}
Let $X: r \to s$ be a 1-morphism in a 2-category. A \emph{right dual} $[X^*,\eta,\epsilon]$ for $X$ is:
\begin{itemize}
\item A 1-morphism $X^*: s \to r$.
\item Two 2-morphisms $\eta: \id_s \to X^* \otimes X$ and $\epsilon: X \otimes X^* \to \id_r$ satisfying the following \emph{snake equations}: 
\begin{calign}\label{eq:rightsnakes}
\includegraphics[valign=c]{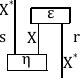}
~~=~~
\includegraphics[valign=c]{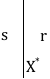}
&
\includegraphics[valign=c]{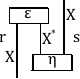}
~~=~~
\includegraphics[valign=c]{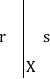}
\end{calign}
\end{itemize}
A \emph{left dual} $[{}^*X,\eta,\epsilon]$ is defined similarly, with 2-morphisms $\eta: \id_s \to X \otimes {}^*X$ and $\epsilon: {}^*X \otimes X \to \id_r$ satisfying the analogues of~\eqref{eq:rightsnakes}. 

We say that a 2-category $\mathcal{C}$ \emph{has right duals} (resp. \emph{has left duals}) if every 1-morphism $X$ in $\mathcal{C}$ has a chosen right dual $[X^*,\eta,\epsilon]$ (resp. a chosen left dual).
\end{definition}
\noindent
To represent duals in the graphical calculus, we label the $X$-wire and the $X^*$-wire with the label $X$, draw an upwards-pointing arrow on the $X$-wire and a downward-pointing arrow on the $X^*$-wire, and write $\eta$ and $\epsilon$ as a cup and a cap, respectively. Then the equations~\eqref{eq:rightsnakes} appear as follows:
\begin{calign}\nonumber
\includegraphics[valign=c]{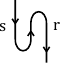}
~~=~~
\includegraphics[valign=c]{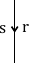}
~~~~~~~~
\includegraphics[valign=c]{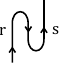}
~~=~~
\includegraphics[valign=c]{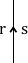}
&&
\includegraphics[valign=c]{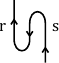}
~~=~~
\includegraphics[valign=c]{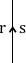}
~~~~~~~~
\includegraphics[valign=c]{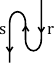}
~~=~~
\includegraphics[valign=c]{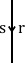}\\\nonumber
\text{right dual} &&\text{left dual}
\end{calign}
Since the graphical calculus for 2-categories is just a `region-labelled' version of the graphical calculus for monoidal categories, various statements about duals in monoidal categories  immediately generalise to duals in 2-categories. We recall some of these statements now.
\begin{lemma}[{\cite[Lemmas 3.6, 3.7]{Heunen2019}}]\label{lem:nestedduals}
If $[X^*,\eta_X,\epsilon_X]$ and $[Y^*,\eta_Y,\epsilon_Y]$ are right duals for $X:r \to s$ and $Y: s \to t$ respectively, then $[Y^* \otimes X^*, \eta_{X \otimes Y},\epsilon_{X \otimes Y}]$ is right dual to $X \otimes Y$, where $\eta_{X \otimes Y}$ and $\epsilon_{X \otimes Y}$ are defined by:
\begin{calign}\label{eq:nestedduals}
\includegraphics[valign=c]{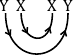}
&&
\includegraphics[valign=c]{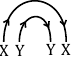}
\\\nonumber
\eta_{X \otimes Y} && \epsilon_{X \otimes Y}
\end{calign}
Moreover, for any object $r$, $[\id_r,\id_{\id_r},\id_{\id_r}]$ is right dual to $\id_r$. Analogous statements hold for left duals.
\end{lemma}
\begin{lemma}[{\cite[Lemma 3.4]{Heunen2019}}]\label{lem:relateduals}
Let $X: r \to s$ be a 1-morphism, and let $[X^*,\eta,\epsilon],[X^*{}',\eta',\epsilon']$ be right duals. Then there is a unique 2-isomorphism $\alpha: X^* \to X^*{}'$ such that 
\begin{calign}\label{eq:relateduals}
\includegraphics[valign=c]{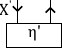}
~~=~~
\includegraphics[valign=c]{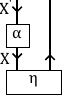}
&
\includegraphics[valign=c]{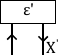}
~~=~~
\includegraphics[valign=c]{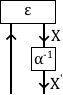}
\end{calign}
An analogous statement holds for left duals.
\end{lemma}
\noindent
In a 2-category with duals, we can define a notion of transposition for 2-morphisms.
\begin{definition}
Let $X,Y: r \to s$ be 1-morphisms with chosen right duals $[X^*,\eta_X,\epsilon_X]$ and $[Y^*,\eta_Y,\epsilon_Y]$. For any 2-morphism $f:X \to Y$, we define its \emph{right transpose} (a.k.a. \emph{mate}) $f^*: Y^* \to X^*$ as follows:
\begin{calign}\label{eq:rtranspose}
\includegraphics[valign=c]{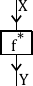}
~~=~~
\includegraphics[valign=c]{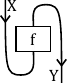}
\end{calign}
For left duals ${}^*X,{}^*Y$, a \emph{left transpose} may be defined analogously.
\end{definition}
\noindent
In this work we are mostly interested in categories with compatible left and right duals. Such categories are called \emph{pivotal}. Let $\mathcal{C}$ be a 2-category with right duals. It is straightforward to check that the following defines an identity-on-objects pseudofunctor $\mathcal{C} \to \mathcal{C}$, which we call the \emph{double duals} pseudofunctor:
\begin{itemize} 
\item 1-morphisms $X:r \to s$ are taken to the double dual $X^{**}:=(X^*)^*$.
\item 2-morphisms $f: X \to Y$ are taken to the double transpose $f^{**}:=(f^*)^*$.
\item The multiplicators $m_{X,Y}$ and unitors $u_r$ are defined using the isomorphisms of Lemma~\ref{lem:relateduals}.
\end{itemize}
\begin{definition}\label{def:pivcat}
We say that a 2-category $\mathcal{C}$ with right duals is \emph{pivotal} if there is an invertible icon (Definition~\ref{def:pnt}) from the double duals pseudofunctor to the identity pseudofunctor. 
\end{definition}
\noindent
Roughly, the existence of an invertible icon in Definition~\ref{def:pivcat} comes down to the following statement:
\begin{itemize}
\item For every 1-morphism $X: r \to s$, there is a 2-isomorphism $\iota_X: X^{**} \to X$.
\item These $\{\iota_X\}$ are compatible with composition in $\mathcal{C}$.
\end{itemize}
In a pivotal 2-category, for any $X: r \to s$ the right dual $X^*$ is also a left dual for $X$ by the following cup and cap. Here and throughout we will indicate the double dual $X^{**}$ in the diagrammatic calculus by an $X$-labelled wire with a double upwards-pointing arrow:
\begin{calign}\label{eq:pivldualdef}
\includegraphics[valign=c]{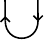}
~~:=~~
\includegraphics[valign=c]{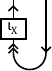}
&
\includegraphics[valign=c]{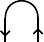}
~~:=~~
\includegraphics[valign=c]{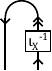}
\end{calign}
With these left duals, the left transpose of a 2-morphism is equal to the right transpose. Whenever we refer to a pivotal 2-category from now on, we suppose that the left duals are chosen in this way. 

There is a very useful graphical calculus for these compatible dualities in a pivotal 2-category.
To represent the transpose, we will modify our notation slightly. We now represent a morphism $f: X \to Y$ not by a rectangular box, but by a box where the right vertical edge is tilted:
\begin{align*}
\includegraphics[valign=c]{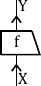}
\end{align*}
We now represent the transpose by rotating the boxes, as though we had `yanked' both ends of the wire in the RHS of~\eqref{eq:rtranspose}:
\begin{calign}\nonumber
\includegraphics[valign=c]{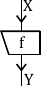}
:=
\includegraphics[valign=c]{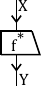}
\end{calign}
Using this notation, 2-morphisms now freely slide around cups and caps.
\begin{lemma}[{\cite[Lemma 3.12, Lemma 3.26]{Heunen2019}}]\label{lem:sliding}
Let $\mathcal{C}$ be a pivotal 2-category and $f:X \to Y$ a 2-morphism. Then:
\begin{calign}\label{eq:sliding}
\includegraphics[valign=c]{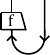}
~~=~~
\includegraphics[valign=c]{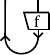}
&
\includegraphics[valign=c]{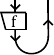}
~~=~~
\includegraphics[valign=c]{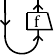}
&
\includegraphics[valign=c]{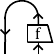}
~~=~~
\includegraphics[valign=c]{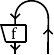}
&
\includegraphics[valign=c]{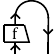}
~~=~~
\includegraphics[valign=c]{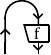}
\end{calign}
\end{lemma}
\noindent
In a pivotal 2-category, we can define notions of \emph{trace} and \emph{dimension} for 1-morphisms.  
\begin{definition}\label{def:trace}
Let $X: r \to s$ be an 1-morphism and let $f: X \to X$ be a 2-morphism in a pivotal 2-category $\mathcal{C}$. We define the \emph{right trace} of $f$ to be the following 2-morphism $\Tr_{R}(f): \id_r \to \id_r$: 
\begin{calign}\nonumber
\includegraphics{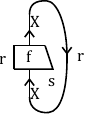}
\end{calign}
We define the \emph{right dimension} $\dim_R(X)$ of the 1-morphism $X$ to be $\Tr_R(\id_X)$. 
The \emph{left trace} $\Tr_L(f):\id_s \to \id_s$ and \emph{left dimension} $\dim_L(X)$ are defined analogously using the right cup and left cap. 
\end{definition}

\paragraph{Pivotal functors.} We now consider pseudofunctors between pivotal 2-categories. We first observe that the duals in $\mathcal{C}$ induce duals in $\mathcal{D}$ under a pseudofunctor $F: \mathcal{C} \to \mathcal{D}$. 
\begin{lemma}[Induced duals]\label{lem:indduals}
Let $X: r \to s$ be a 1-morphism in $\mathcal{C}$ and $[X^*,\eta,\epsilon]$ a right dual. Then $F(X^*)$ is a right dual of $F(X)$ in $\mathcal{D}$ with the following cup and cap:
\begin{calign}\nonumber
\includegraphics[scale=.6]{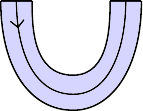}
&
\includegraphics[scale=.6]{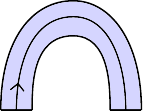}
\\\nonumber
F(\eta) & F(\epsilon)
\end{calign}
The analogous statement holds for left duals.
\end{lemma}
\begin{proof}
We show one of the snake equations~\eqref{eq:rightsnakes} in the case of right duals; the others are all proved similarly.
\begin{calign}\nonumber
\includegraphics[valign=c]{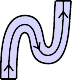}
~~=~~
\includegraphics[valign=c]{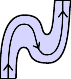}
~~=~~
\includegraphics[valign=c]{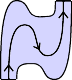}
~~=~~
\includegraphics[valign=c]{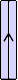}
\end{calign}
Here the first equality is by Lemma~\ref{lem:pushpast}, the second by~\eqref{eq:psfctnat} and the third by~\eqref{eq:psfctunital}.
\end{proof}
\noindent
For any 1-morphism $X$ of $\mathcal{C}$, then, we have two sets of left and right duals on $F(X)$; the first from the pivotal structure in $\mathcal{C}$ by Lemma~\ref{lem:indduals}, and the second from the pivotal structure in $\mathcal{D}$. To depict both dualities in the graphical calculus, we here introduce elements of the graphical syntax which allow us to `zoom in' and `zoom out', representing $F(X)$ as a directed coloured wire rather than as a boxed wire:
\begin{calign}\label{eq:zoominout}
\includegraphics[valign=c]{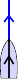}
&
\includegraphics[valign=c]{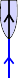}
\end{calign}
We emphasise that these elements of the graphical calculus are semantically empty, simply switching between two ways of representing $F(X)$. We can now represent the duality corresponding to the pivotal structure in $\mathcal{D}$ in the usual way on the directed coloured wire, writing $F(X)^* $ and $F(X)^{**}$ with a downwards and a double upwards arrow respectively, as usual.

We now define a pivotal pseudofunctor. 
Let $\mathcal{C},\mathcal{D}$ be pivotal 2-categories, and let $F: \mathcal{C} \to \mathcal{D}$ be a pseudofunctor. By Lemma~\ref{lem:relateduals}, for every 1-morphism $X: r \to s$ in $\mathcal{C}$ we obtain two 2-isomorphisms $F_l,F_r: F(X^*) \to F(X)^*$, the first from the left duality and the second from the right duality:
\begin{calign}\label{eq:pivlrisos}
\includegraphics[valign=c]{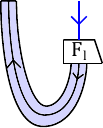}
~~=~~
\includegraphics[valign=c]{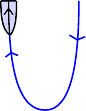}
&
\includegraphics[valign=c]{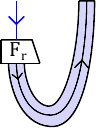}
~~=~~
\includegraphics[valign=c]{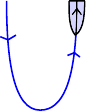}
\end{calign}
The following definition is inspired by the corresponding definition for monoidal functors~\cite[\S{}1.7.5]{Turaev2017}.
\begin{definition}\label{def:pivfct}
Let $\mathcal{C},\mathcal{D}$ be pivotal 2-categories, let $F: \mathcal{C} \to \mathcal{D}$ be a pseudofunctor, and let $F_l,F_r: F(X^*) \to F(X)^*$ be the isomorphisms~\eqref{eq:pivlrisos}. We say that $F$ is \emph{pivotal} if $F_l = F_r=:P$.
\end{definition}
\noindent
In the graphical calculus we again here write these isomorphisms $P$ and their inverses as `zoom ins' and `zoom outs', which this time are not semantically empty:
\begin{calign}\nonumber
\includegraphics[valign=c]{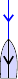}
~~:=~~
\includegraphics[valign=c]{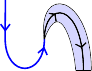}
&
\includegraphics[valign=c]{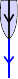}
~~:=~~
\includegraphics[valign=c]{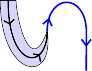}
\end{calign}
\ignore{
Uniqueness of right duals in $\mathcal{D}$ induces isomorphisms $F(X)^* \cong F(X^*)$ and $F(X)^{**} \cong F(X^{**})$, which we again write as zoom-ins and zoom-outs:
\begin{calign}
\includegraphics[valign=c]{Figures/svg/monoidalfunctors/fXdualzoomout.png}
~~:=~~
\includegraphics[valign=c]{Figures/svg/monoidalfunctors/fXdualzoomoutdef.png}
&
\includegraphics[valign=c]{Figures/svg/monoidalfunctors/fXdualzoomin.png}
~~:=~~
\includegraphics[valign=c]{Figures/svg/monoidalfunctors/fXdualzoomindef.png}
&
\includegraphics[valign=c]{Figures/svg/monoidalfunctors/fXdoubledualzoomout.png}
~~:=~~
\includegraphics[valign=c]{Figures/svg/monoidalfunctors/fXdoubledualzoomoutdef.png}
&
\includegraphics[valign=c]{Figures/svg/monoidalfunctors/fXdoubledualzoomin.png}
~~:=~~
\includegraphics[valign=c]{Figures/svg/monoidalfunctors/fXdoubledualzoomindef.png}
\end{calign}
It is easy to prove the following relations, which we will use later:
\begin{lemma}
Add
\end{lemma}
\begin{definition}
Let $\mathcal{C},\mathcal{D}$ be pivotal 2-categories. We say that a pseudofunctor $\mathcal{C} \to \mathcal{D}$ is \emph{pivotal} (a.k.a. \emph{planar pivotal}) when 
\begin{calign}
\includegraphics[valign=c]{Figures/svg/monoidalfunctors/pivfunctor1.png}
~~=~~
\includegraphics[valign=c]{Figures/svg/monoidalfunctors/pivfunctor2.png}
\end{calign}
\end{definition}
}

\subsection{Pivotal dagger 2-categories}
The final structure we will consider on a 2-category is a \emph{dagger}. In this section we define a dagger 2-category and discuss compatibility with the various notions already introduced. 
\begin{definition}\label{def:dagcat}
A dagger 2-category is a 2-category equipped with contravariant identity-on-objects functors $\dagger_{r,s}:\mathcal{C}(r,s) \to \mathcal{C}(r,s)$ for each pair of objects $r,s$, which are:
\begin{itemize} 
\item \emph{Involutive}: for any morphism $f: X \to Y$ in $\mathcal{C}(r,s)$, $\dagger_{r,s}(\dagger_{r,s}(f)) = f$. (This is to say that $\mathcal{C}(r,s)$ is a \emph{dagger category}.)
\item \emph{Compatible with 1-morphism composition}: for any 1-morphisms $X,X': r \to s$ and $Y,Y': s \to t$, and 2-morphisms $\alpha: X \to X'$ and $\beta: Y \to Y'$, we have $(\alpha \otimes \beta)^{\dagger_{r,t}} = \alpha^{\dagger_{r,s}}  \otimes \beta^{\dagger_{s,t}}$.
\end{itemize}
\ignore{
\item \emph{Compatible with composition}: For any 2-morphisms $f:X \to Y,g:Y \to Z$ in $\mathcal{C}(r,s)$ and $g$ in $\mathcal{C}(s,t)$, $(f \circ g)^{\dagger_{r,t}} = f^{\dagger_{r,s}} \circ g^{\dagger_{s,t}}$.
\item \emph{Preserves the identity 2-morphisms.} and $(\id_{X})^{\dagger_{r,s}} = \id_X$.
\end{itemize} 
}
We call the image of a 2-morphism $f:X \to Y$ under $\dagger_{r,s}$ its \emph{dagger}, and write it as $f^{\dagger}$.
\end{definition}
\noindent
In the graphical calculus, we represent the dagger of a 2-morphism by reflection in a horizontal axis, preserving the direction of any arrows:
\begin{calign}\label{eq:graphcalcdagger}
\includegraphics[valign=c]{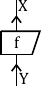}
~~:=~~
\includegraphics[valign=c]{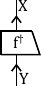}
\end{calign}
\begin{definition}
Let $\mathcal{C}$ be a dagger 2-category.  We say that a 2-morphism $\alpha: X \to Y$ in $\mathcal{C}(r,s)$ is an \emph{isometry} if $\alpha^{\dagger} \circ \alpha  = \id_X$. We say that it is \emph{unitary} if it is an isometry and additionally a \emph{coisometry}, i.e. $\alpha \circ \alpha^{\dagger} = \id_Y$.
\end{definition}
\begin{definition}\label{def:daggerequiv}
Let $\mathcal{C}$ be a dagger 2-category and let $r,s$ be objects. We say that a 1-morphism $X: r \to s$ is a \emph{dagger equivalence} if there exists some 1-morphism $X^{-1}: s \to r$ (called the weak inverse) and unitary 2-morphisms $\eta: \id_s \to X^{-1} \otimes X$ and $\epsilon: X \otimes X^{-1} \to \id_r$ witnessing an equivalence (Definition~\ref{def:equiv}). It is a standard result that $\eta,\epsilon$ may be chosen such that $[X^{-1},\eta,\epsilon]$ is a right dual for $X$ (this is to say that any dagger equivalence can be promoted to an \emph{adjoint} dagger equivalence).
\end{definition}
\noindent
We now give the condition for  compatibility of dagger and pivotal structure.
\begin{definition}\label{def:pivdagcat}
Let $\mathcal{C}$ be a pivotal 2-category which is also a dagger 2-category. \ignore{Let $X: r \to s$ be a 1-morphism in $\mathcal{C}$, and and let $[X^*,\eta,\epsilon]$ be the chosen right dual; we represent $\eta,\epsilon$ in the graphical calculus by a cup and a cap.} We say that $\mathcal{C}$ is a  \emph{pivotal dagger 2-category} when, for all 1-morphisms $X: r \to s$:
\begin{calign}\label{eq:pivdagcat}
\includegraphics[valign=c]{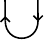}
~~=~~
\left(
\includegraphics[valign=c]{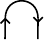}\right)^{\dagger}
&
\includegraphics[valign=c]{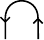}
~~=~~
\left(\includegraphics[valign=c]{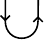}\right)^{\dagger}
\end{calign}
\end{definition}
\begin{remark}
For any object $X$ in a dagger 2-category, a right dual $[X^*,\eta_X,\epsilon_X]$ is also a left dual $[X^*,\epsilon_X^{\dagger},\eta_X^{\dagger}]$. This means that a dagger 2-category with right duals also has left duals. The pivotal structure gives another way to obtain left duals from right duals~\eqref{eq:pivldualdef}.  The equation~\eqref{eq:pivdagcat} implies that the left duals obtained from the dagger structure are the same as those obtained from the pivotal structure.

Practically, when taking the dagger of a cup or a cap in a pivotal dagger category, the equation~\eqref{eq:pivdagcat} implies we should reflect the cup or cap in a horizontal axis, preserving the direction of the arrows.
\end{remark}
The following result from the theory of pivotal dagger categories generalises immediately to pivotal dagger 2-categories, since the proof is entirely diagrammatic. 
\begin{lemma}[{\cite[Prop. 3.5.2, Prop. 3.5.3]{Heunen2019}}]\label{lem:pivdagisometry}
Let $\mathcal{C}$ be a pivotal dagger 2-category. Then the 2-isomorphism components $\iota_X$ of the invertible icon $\iota: **_{\mathcal{C}} \to \id_{\mathcal{C}}$ are unitary, and the following equality holds:
\begin{calign}
\includegraphics[valign=c]{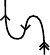}
~~=~~
\includegraphics[valign=c]{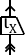}
&&
\includegraphics[valign=c]{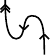}
~~=~~
\includegraphics[valign=c]{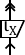}
\end{calign}
\end{lemma}
\noindent
For any morphism $f:X \to Y$, a pivotal dagger structure implies the following \emph{conjugate} 2-morphism $f_*$ is graphically well-defined:
\begin{calign}\label{eq:conjugate}
\includegraphics[valign=c]{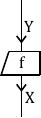}
~~:=~~
\includegraphics[valign=c]{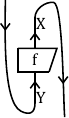}
~~=~~
\includegraphics[valign=c]{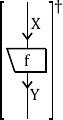}
\end{calign}
\begin{remark}\label{rem:sliding2}
In view of~\eqref{eq:pivdagcat}, in a pivotal dagger 2-category we also have sliding equations for the dagger and conjugate 2-morphisms obtained by taking the reflections of~\eqref{eq:sliding} in a horizontal axis.
\end{remark}
\noindent
Finally, we consider the notion of a unitary pseudofunctor between dagger 2-categories.
\begin{definition}
Let $\mathcal{C},\mathcal{D}$ be dagger 2-categories and let $F: \mathcal{C} \to \mathcal{D}$ be a pseudofunctor. We say that $F$ is \emph{unitary} if the following hold:
\begin{itemize}
\item For any 2-morphism $f$, $F(f^{\dagger}) = F(f)^{\dagger}$:
\begin{calign}\nonumber
\includegraphics[scale=0.5,valign=c]{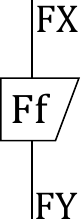}
~~=~~
\includegraphics[scale=0.5,valign=c]{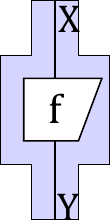}
\end{calign}
\item The multiplicators $\{m_{X,Y}\}$ and unitors $\{u_r\}$ are all unitary 2-morphisms in $\mathcal{D}$.  
\end{itemize}
\end{definition}
\begin{remark}
The latter condition implies that our depiction of the inverses $\{m^{-1}_{X,Y}\}$ and $\{u_r^{-1}\}$ by reflection in a horizontal axis~(\ref{eq:multiplicator},~\ref{eq:unitor}) is consistent with the `horizontal flip' calculus~\eqref{eq:graphcalcdagger} of the dagger in $\mathcal{D}$.
\end{remark}

\section{Dualisable pseudonatural transformations}\label{sec:duals}

\subsection{Duals}
\ignore{
It is worth remarking that most of the results in the following sections can be immediately horizontally categorified to apply to pseudonatural transformations between arbitrary 2-categories. However, we here restrict statements to monoidal category theory.} 
We now consider invertibility of pseudonatural transformations. As we saw in Definition~\ref{def:duals}, the most general notion of invertibility of a 1-morphism in a 2-category is dualisability. The following definition is nothing more than an explicit statement of what it means for a 1-morphism in $\Fun(\mathcal{C},\mathcal{D})$ to have a dual (Definition~\ref{def:duals}).
\begin{definition}\label{def:dualpnt}
Let $F, G: \mathcal{C} \to \mathcal{D}$ be pseudofunctors and $\alpha: F \to G$ a pseudonatural transformation. A \emph{right dual} for $\alpha$ is a triple $[\alpha^*, \eta, \epsilon]$, where  $\alpha^*: G \to F$ is a pseudonatural transformation and $\epsilon: \alpha \otimes \alpha^* \to \id_F$ and $\eta: \id_G \to \alpha^* \otimes \alpha$ are modifications, such that the following equations hold for any 1-morphism $X: r \to s$ in $\mathcal{C}$:
\begin{calign}\label{eq:rightmodsnakes}
\includegraphics[valign=c]{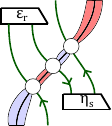}
~~=~~
\includegraphics[valign=c]{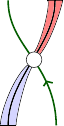}
&
\includegraphics[valign=c]{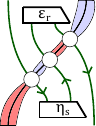}
~~=~~
\includegraphics[valign=c]{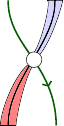}
\end{calign}
In the above equations we have drawn the $\alpha$-wire in green with an upwards-facing arrow and the $\alpha^*$-wire in green with a downwards-facing arrow, as though $\alpha_r$ and $\alpha_r^*$ were dual 1-morphisms. This will be justified by Lemma~\ref{lem:pntduals}. 
A \emph{left dual} is defined analogously. 

\ignore{
if there exist modifications $\nu: (\beta \circ \alpha, A \otimes B) \to (\id,\mathbbm{1})$ and $\epsilon:  (\id,\mathbbm{1}) \to (\alpha \circ \beta, B \otimes A)$ such that the following equations hold:}
\ignore{
We say that $\nu, \epsilon$ define an \emph{equivalence} if the modifications $d,e$ are invertible.\footnote{For simplicity, since every equivalence in a 2-category can be promoted to an adjoint equivalence~\cite{}, we have here considered equivalence as a form of adjunction.} In this case, }
\end{definition}
\begin{lemma}\label{lem:pntduals}
Let $F,G: \mathcal{C} \to \mathcal{D}$ be pseudofunctors and $\alpha: F \to G$ a pseudonatural transformation with right dual $[\alpha^*,\eta,\epsilon]$. Then for each object $r$ of $\mathcal{C}$, $[\alpha^*_r,\eta_r,\epsilon_r]$ is a right dual for $\alpha_r$ in $\mathcal{D}$. The analogous statement holds for left duals.
\end{lemma}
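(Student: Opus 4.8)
The plan is to observe that the lemma is essentially a componentwise reading of Definition~\ref{def:dualpnt}, combined with the fact that an equation of modifications holds if and only if it holds object-component by object-component. First I would record the types of the relevant data: since $\eta:\id_G \to \alpha^* \otimes \alpha$ and $\epsilon:\alpha \otimes \alpha^* \to \id_F$ are modifications, they consist of families of 2-morphisms in $\mathcal{D}$, and because $(\id_G)_r = \id_{G(r)}$, $(\alpha^*\otimes\alpha)_r = \alpha^*_r \otimes \alpha_r$, $(\alpha\otimes\alpha^*)_r = \alpha_r\otimes\alpha^*_r$ and $(\id_F)_r = \id_{F(r)}$, the $r$-components are 2-morphisms $\eta_r: \id_{G(r)} \to \alpha^*_r \otimes \alpha_r$ and $\epsilon_r: \alpha_r\otimes\alpha^*_r \to \id_{F(r)}$ in $\mathcal{D}$ — exactly of the shape required to be the cup and cap of a right dual $[\alpha^*_r,\eta_r,\epsilon_r]$ for $\alpha_r$ in the sense of Definition~\ref{def:duals}.

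Next I would verify the two snake equations~\eqref{eq:rightsnakes} for these data. The point is that the snake equations~\eqref{eq:rightmodsnakes} in Definition~\ref{def:dualpnt} are equalities of modifications (the first an equality of modifications $\alpha\to\alpha$, the second of modifications $\alpha^*\to\alpha^*$), and two modifications coincide precisely when all their object-components do. The object-component at $r$ of any of the modifications appearing in~\eqref{eq:rightmodsnakes} is obtained by setting the ambient 1-morphism to $X=\id_r$, so that the invisible $\id_r$-wire disappears; using that horizontal and vertical composition of modifications is computed componentwise, that the $r$-component of a composite pseudonatural transformation is the composite of the $r$-components, and that $(\id_\alpha)_r = \id_{\alpha_r}$, the two equations of~\eqref{eq:rightmodsnakes} collapse to exactly the 2-morphism equations of~\eqref{eq:rightsnakes} with $X,X^*,\eta,\epsilon$ replaced by $\alpha_r,\alpha^*_r,\eta_r,\epsilon_r$. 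Hence $[\alpha^*_r,\eta_r,\epsilon_r]$ is a right dual for $\alpha_r$ in $\mathcal{D}$. The left-dual case is handled identically, using the left-dual analogues of~\eqref{eq:rightmodsnakes} and~\eqref{eq:rightsnakes}.

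There is no genuine obstacle here: the only care needed is the bookkeeping of recognising the diagrams in~\eqref{eq:rightmodsnakes}, which are drawn with an auxiliary $X$-wire, as full modification identities whose $\id_r$-components are precisely the snake identities for $\alpha_r$. This lemma is exactly the promised justification for drawing the $\alpha$- and $\alpha^*$-wires in Definition~\ref{def:dualpnt} with dual orientations, and it is the base case from which the stronger structural results (e.g. Corollary~\ref{cor:dualsexistence}) will build.
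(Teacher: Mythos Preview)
Your overall strategy matches the paper's: specialise~\eqref{eq:rightmodsnakes} to the identity 1-morphism. The gap is the phrase ``so that the invisible $\id_r$-wire disappears.'' The equations~\eqref{eq:rightmodsnakes} are drawn in $\mathcal{D}$ and, at $X=\id_r$, still carry the wires $F(\id_r)$, $G(\id_r)$ and the white vertices $\alpha_{\id_r}$, $\alpha^*_{\id_r}$. Since $F,G$ are pseudofunctors rather than strict 2-functors, $F(\id_r)$ is not $\id_{F(r)}$ but only isomorphic to it via the unitor $u_r$ of~\eqref{eq:unitor}, and by~\eqref{eq:pntmonunit} the component $\alpha_{\id_r}$ is a genuine (non-identity) 2-morphism. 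Hence the $X=\id_r$ instance of~\eqref{eq:rightmodsnakes} does not literally reduce to~\eqref{eq:rightsnakes} for $\alpha_r$; one must insert and cancel the unitors. The paper's proof does exactly this: it brackets the bare snake for $\alpha_r$ by $u_r$ and $u_r^{-1}$, uses~\eqref{eq:pntmonunit} to introduce $\alpha_{\id_r}$, applies~\eqref{eq:rightmodsnakes} at $X=\id_r$, and then reverses those steps using monoidality of $\alpha$ and $\alpha^*$ and invertibility of the unitors.

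Relatedly, your framing of~\eqref{eq:rightmodsnakes} as ``equalities of modifications'' whose ``object-component at $r$ is obtained by setting $X=\id_r$'' conflates two things. Modifications have components indexed by \emph{objects}; the display~\eqref{eq:rightmodsnakes} is indexed by \emph{1-morphisms} and is not itself the family of object-components of a modification identity. There is a clean abstract argument along the lines you gesture at --- the snake equation in $\Fun(\mathcal{C},\mathcal{D})$ is a modification identity $\alpha\to\alpha$, and its $r$-component is precisely~\eqref{eq:rightsnakes} for $\alpha_r$ --- but to run it you must first identify the paper's concrete display~\eqref{eq:rightmodsnakes} with that abstract identity, and that identification is again the unitor and monoidality bookkeeping you skipped.
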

\begin{proof}
We prove the right snake equation for right duals; everything else may be proved similarly. For any object $r$ of $\mathcal{C}$:
\begin{calign}
\includegraphics[valign=c]{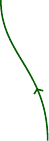}
~~=~~
\includegraphics[valign=c]{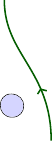}
~~=~~
\includegraphics[valign=c]{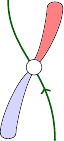}
~~=~~
\includegraphics[valign=c]{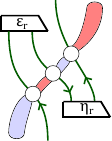}
~~=~~
\includegraphics[valign=c]{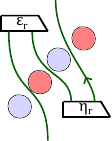}
~~=~~
\includegraphics[valign=c]{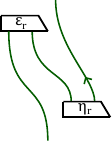}
\end{calign}
Here the first equation is by invertibility of the unitor $u_r$~\eqref{eq:unitor} for $F$; the second by monoidality~\eqref{eq:pntmonunit} of the pseudonatural transformation $\alpha$ on the 1-morphism $\id_r: r \to r$ and invertibility of the unitor for $G$; the third by~\eqref{eq:rightmodsnakes}; the fourth by monoidality of $\alpha$ and $\alpha^*$ on $\id_r$; and the last by invertibility of the unitors.
\end{proof}
\noindent
From this point forward, therefore, we will draw $\eta_r$ and $\epsilon_r$ as a cup and cap.  If $\mathcal{C}$ has duals, we obtain explicit expressions for the left and right duals in $\Fun(\mathcal{C},\mathcal{D})$ whenever they exist.
\begin{lemma}\label{lem:pntduals}
Let $F, G: \mathcal{C} \to \mathcal{D}$ be pseudofunctors, and suppose that $\mathcal{C}$ has left duals. A pseudonatural transformation $\alpha: F \to G$ has a right dual in $\Fun(\mathcal{C},\mathcal{D})$ precisely when $\alpha_r$ has a right dual $[\alpha_r^*,\eta_r, \epsilon_r]$ in $\mathcal{D}$ for each object $r$ of $\mathcal{C}$. A right dual $\alpha^*$ is defined as follows:
\begin{itemize}
\item For each object $r$ of $\mathcal{C}$, $(\alpha^*)_r = (\alpha_r)^*$ and the components of the modifications $\eta, \epsilon$ are $[\eta_r,\epsilon_r]$.
\item For each 1-morphism $X: r \to s$ of $\mathcal{C}$, $(\alpha^*)_X$ is:
\begin{calign}\label{eq:dualpnt}
\includegraphics[valign=c]{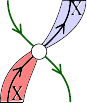}
:=
\includegraphics[valign=c]{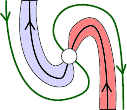}
\end{calign}
\end{itemize}
This statement also holds with `left' and `right' swapped, in which case a left dual ${}^*\alpha$ is defined as follows:
\begin{itemize}
\item For each object $r$ of $\mathcal{C}$, $({}^*\alpha)_r = {}^*(\alpha_r)$ and the components of the modifications $\eta, \epsilon$ are $[\eta_r,\epsilon_r]$.
\item For each 1-morphism $X: r \to s$ of $\mathcal{C}$, $({}^*\alpha)_X$ is defined as in~\eqref{eq:dualpnt}, but with the opposite transposition.
\end{itemize}
\end{lemma}
\begin{proof}
We consider the case of the right dual $\alpha^*$; the argument for the left dual is similar. 

If some $\alpha_r$ has no right dual, then nor can $\alpha$ by Lemma~\ref{lem:pntduals}. 

If every $\alpha_r$ has some right dual, then we must show firstly that $\alpha^*$ as defined is a pseudonatural transformation, and secondly that $\eta, \epsilon$ as defined are modifications satisfying the snake equations~\eqref{eq:rightsnakes}.
\begin{enumerate}
\item \emph{Naturality of $\alpha^*$.~\eqref{eq:pntnat}} For all 2-morphisms $f: X \to Y$ in $\mathcal{C}$:
\begin{calign}
\includegraphics[valign=c]{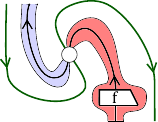}
~~=~~
\includegraphics[valign=c]{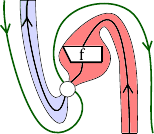}
~~=~~
\includegraphics[valign=c]{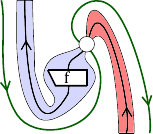}
~~=~~
\includegraphics[valign=c]{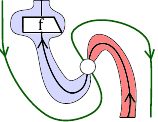}
\end{calign}
Here the first and third equalities use Lemma~\ref{lem:sliding}; the second equality is by naturality of $\alpha$.
\item \emph{Monoidality of $\alpha^*$.} (\ref{eq:pntmon}-\ref{eq:pntmonunit}) 
\begin{itemize}
\item For every pair of 1-morphisms $X:r \to s, Y: s \to t$ in $\mathcal{C}$, let $f: {}^*(X \otimes Y) \to {}^*Y \otimes {}^*X$ be the isomorphism of Lemmas~\ref{lem:nestedduals} and~\ref{lem:relateduals}. Then:
\begin{calign}\nonumber
\includegraphics[valign=c]{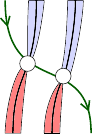}
~~=~~
\includegraphics[valign=c]{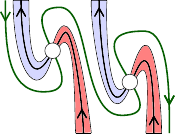}
~~=~~
\includegraphics[valign=c]{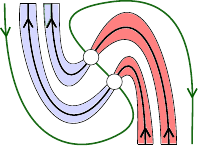}
~~=~~
\includegraphics[valign=c]{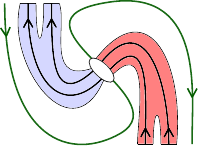}\\\label{eq:pntdualmonpf}
~~=~~
\includegraphics[valign=c]{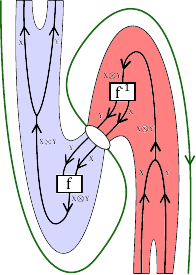}
~~=~~
\includegraphics[valign=c]{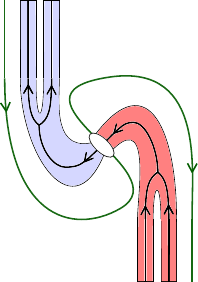}
~~=~~
\includegraphics[valign=c]{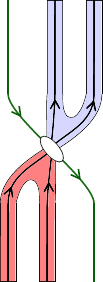}
\end{calign} 
Here the first equality is by definition; the second by a snake equation for $\alpha_s$; the third by monoidality of $\alpha$ and some manipulation of functorial boxes; the fourth by Lemmas~\ref{lem:nestedduals} and~\ref{lem:relateduals}; the fifth by naturality of $\alpha$; and the sixth by definition.
\item For every object $r$ of $\mathcal{C}$:
\begin{calign}
\includegraphics[valign=c]{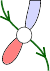}
~~=~~
\includegraphics[valign=c]{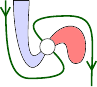}
~~=~~
\includegraphics[valign=c]{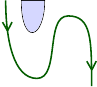}
~~=~~
\includegraphics[valign=c]{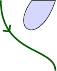}
\end{calign} 
Here the first equality is by definition, the second by monoidality of $\alpha$ and manipulation of functorial boxes, and the third by a snake equation for $\alpha_r$. We have assumed for that the chosen left dual of $\id_r$ is $[\id_r,\id_{\id_r},\id_{\id_r}]$; in general one can use Lemma~\ref{lem:relateduals} and naturality of $\alpha$ as in~\eqref{eq:pntdualmonpf}.
\end{itemize}
\item Since $\eta_r,\epsilon_r$ already satisfy the snake equations for every $r$ by assumption, we need only show that $\eta, \epsilon$ are modifications. For all $X: r \to s$ in $\mathcal{C}$:
\begin{calign}
\includegraphics[valign=c]{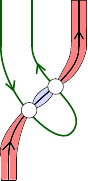}
~~=~~
\includegraphics[valign=c]{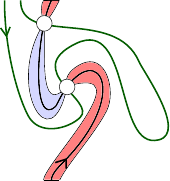}
~~=~~
\includegraphics[valign=c]{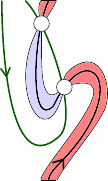}
~~=~~
\includegraphics[valign=c]{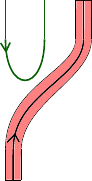}
\end{calign}
\begin{calign}
\includegraphics[valign=c]{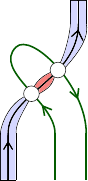}
~~=~~
\includegraphics[valign=c]{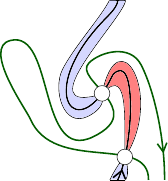}
~~=~~
\includegraphics[valign=c]{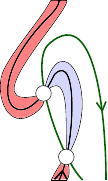}
~~=~~
\includegraphics[valign=c]{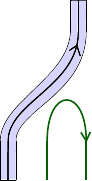}
\end{calign}
Here, the first equalities are by definition, the second are by a snake equation for $\alpha_r^*$ or $\alpha_s^*$, and the third are by naturality and monoidality of $\alpha$. 
\end{enumerate}
\end{proof}
\begin{corollary}\label{cor:dualsexistence}
If $\mathcal{C}$ has left duals, and $\mathcal{D}$ has right duals, then $\Fun(\mathcal{C},\mathcal{D})$ has right duals. 
This statement also holds with `left' and `right' swapped.
\end{corollary}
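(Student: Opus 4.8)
The plan is to read this off Theorem~\ref{thm:pntduals} directly. Fix pseudofunctors $F, G: \mathcal{C} \to \mathcal{D}$ and an arbitrary pseudonatural transformation $\alpha: F \to G$; the task is to exhibit a chosen right dual for $\alpha$ in $\Fun(\mathcal{C},\mathcal{D})$. Since $\mathcal{D}$ has right duals, each 1-morphism $\alpha_r: F(r) \to G(r)$ of $\mathcal{D}$ comes equipped with a chosen right dual $[\alpha_r^*, \eta_r, \epsilon_r]$. This is precisely the hypothesis required to invoke Theorem~\ref{thm:pntduals} (whose other standing assumption, that $\mathcal{C}$ has left duals, holds by hypothesis), so that theorem produces a right dual $\alpha^*$ for $\alpha$ whose component 1-morphisms $(\alpha^*)_r = (\alpha_r)^*$, modification components $[\eta_r,\epsilon_r]$, and 2-morphism data $(\alpha^*)_X$ are all determined by the chosen duals in $\mathcal{C}$ and $\mathcal{D}$ via~\eqref{eq:dualpnt}. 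Because every ingredient of the construction is canonical once the chosen duals in $\mathcal{C}$ and $\mathcal{D}$ are fixed, the assignment $\alpha \mapsto \alpha^*$ furnishes a \emph{chosen} right dual for every 1-morphism of $\Fun(\mathcal{C},\mathcal{D})$, which is exactly the assertion that $\Fun(\mathcal{C},\mathcal{D})$ has right duals in the sense of Definition~\ref{def:duals}.

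For the statement with `left' and `right' swapped, I would run the identical argument using the second half of Theorem~\ref{thm:pntduals}: if $\mathcal{C}$ has right duals and $\mathcal{D}$ has left duals, then each $\alpha_r$ carries a chosen left dual ${}^*(\alpha_r)$ in $\mathcal{D}$, and the theorem builds a chosen left dual ${}^*\alpha$ for each $\alpha$ (using the opposite transposition in~\eqref{eq:dualpnt}), so $\Fun(\mathcal{C},\mathcal{D})$ has left duals.

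I expect essentially no obstacle here: all the content sits in Theorem~\ref{thm:pntduals}, and the corollary is just the observation that ``$\mathcal{D}$ has right duals'' supplies, uniformly in the object $r$, the hypothesis ``$\alpha_r$ has a right dual'' needed to apply it. The one point worth a sentence is that ``has right duals'' demands a \emph{choice} of dual for each 1-morphism rather than mere existence; this is why both hypotheses are phrased in terms of chosen dual data, and why no additional coherence argument beyond Theorem~\ref{thm:pntduals} is required.
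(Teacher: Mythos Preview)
Your proposal is correct and matches the paper's approach: the paper states this corollary immediately after Theorem~\ref{thm:pntduals} with no explicit proof, treating it as the evident consequence you describe. Your additional remark about \emph{chosen} duals is a fair point of care, but the argument is otherwise exactly the intended one.
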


\subsection{Pivotality}
We now consider pivotality of $\Fun(\mathcal{C},\mathcal{D})$. Recall that a 2-category with right duals is \emph{pivotal} (Definition~\ref{def:pivcat}) if there is an invertible icon (Definition~\ref{def:pnt}) from the double duals pseudofunctor to the identity pseudofunctor. 

We now show that $\Fun(\mathcal{C},\mathcal{D})$ inherits pivotality from $\mathcal{C}$ and $\mathcal{D}$ upon restriction to pivotal pseudofunctors. 
\begin{definition}
When $\mathcal{C},\mathcal{D}$ are pivotal we define $\Fun_p(\mathcal{C},\mathcal{D}) \subset \Fun(\mathcal{C},\mathcal{D})$ to be the subcategory whose objects are pivotal pseudofunctors.
\end{definition}
\begin{proposition}\label{prop:pivinduced}
Let $\mathcal{C}, \mathcal{D}$ be pivotal 2-categories, and let $\iota: {**}_{\mathcal{D}} \to \id_{\mathcal{D}}$ be the pivotal structure on $\mathcal{D}$.

The 2-category $\Fun_p(\mathcal{C},\mathcal{D})$ has a canonical structure of a pivotal 2-category. The pivotal structure $\hat{\iota}: {**}_{\Fun(\mathcal{C},\mathcal{D})} \to \id_{\Fun(\mathcal{C},\mathcal{D})}$ assigns to every pseudonatural transformation $\alpha^{**}: F \to G$ the invertible modification $\hat{\iota}_{\alpha}: \alpha^{**} \to \alpha$ 
\ignore{:\alpha^{**} \to (\alpha^{**})^{\hat{\iota}_\alpha} with source $\alpha^{**}$ }
whose components are the 2-isomorphisms $\iota_{\alpha_r}: \alpha_r^{**} \to \alpha_r$ from the pivotal structure on $\mathcal{D}$.
\end{proposition}
\begin{proof}
First we show that the $\hat{\iota}_{\alpha}$ are really modifications. Since $\{\iota_{\alpha_r}\}$ are 2-isomorphisms it is immediate that the \emph{$\hat{\iota}_{\alpha}$-conjugate} $(\alpha^{**})^{\hat{\iota}_{\alpha}}$ of $\alpha^{**}$ is a pseudonatural transformation $F \to G$, where $(\alpha^{**})^{\hat{\iota}_{\alpha}}_r = \alpha_r$ for all objects $r$ of $\mathcal{C}$, and $(\alpha^{**})^{\hat{\iota}_{\alpha}}_X$ is defined as follows for all $X: r \to s$:
\begin{calign}\label{eq:conjpnt}
\includegraphics[scale=0.7,valign=c]{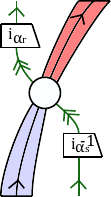}
\end{calign}
It is also clear that $\hat{\iota}_{\alpha}$ is a modification $\alpha^{**} \to (\alpha^{**})^{\hat{\iota}_{\alpha}}$.

We now show that $\hat{\iota}_{\alpha}$ has the right target, i.e. $(\alpha^{**})^{\hat{\iota}_{\alpha}} = \alpha$. 
We first observe that the chosen left dual of a pseudonatural transformation between pivotal functors is identical to its chosen right dual: 
\begin{calign}\label{eq:ldualisrdual}
\includegraphics[scale=0.5,valign=c]{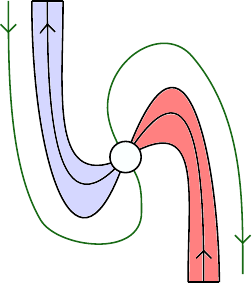}
~~=~~
\includegraphics[scale=0.5,valign=c]{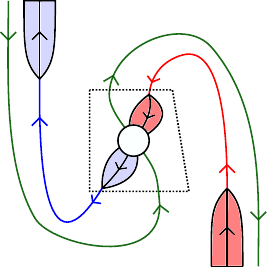}
~~=~~
\includegraphics[scale=0.5,valign=c]{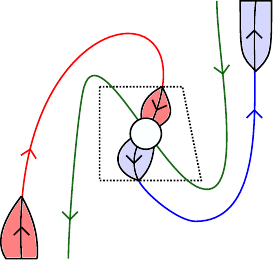}
~~=~~
\includegraphics[scale=0.5,valign=c]{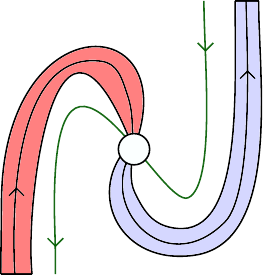}
\end{calign}
Here for the first and third equalities we used Lemma~\ref{lem:relateduals} and the `zoom out' notation~\eqref{eq:zoominout} to relate the duals in $\mathcal{C}$ and $\mathcal{D}$. For the second equality we follow the custom in the setting of pivotal categories of appealing to an unproven but very plausible coherence theorem~\cite[Theorem 4.14]{Selinger2010}; it is not hard to prove the equality directly from the axioms, but we leave this to the reader. For the third equality we require that the pseudofunctors are pivotal.

Now for any $\alpha: F \to G$ and $X: r \to s$ in $\mathcal{C}$ we have:
\begin{calign}\nonumber
\includegraphics[scale=1,valign=c]{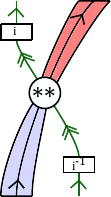}
~~=~~
\includegraphics[scale=0.5,valign=c]{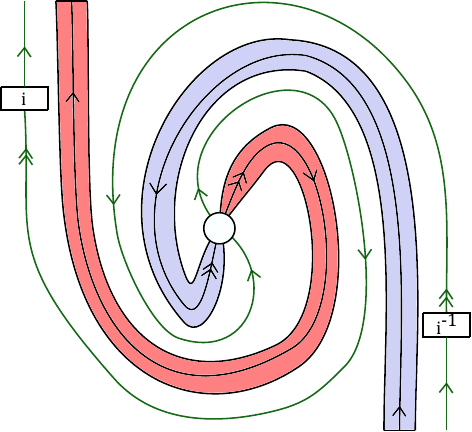}
~~=~~
\includegraphics[scale=0.5,valign=c]{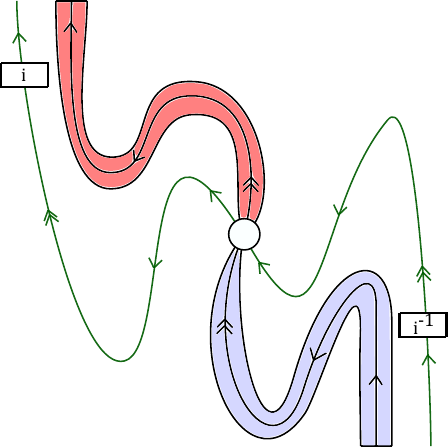}
\\\nonumber
~~=
\includegraphics[scale=0.5,valign=c]{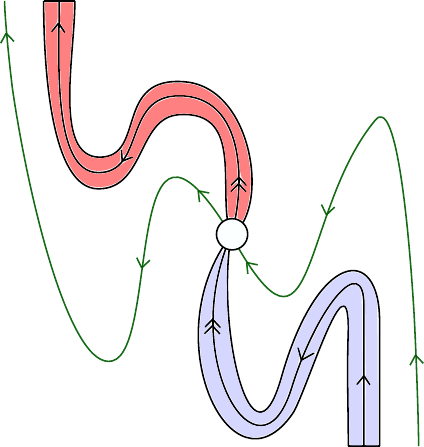}
~~=~~
\includegraphics[scale=0.5,valign=c]{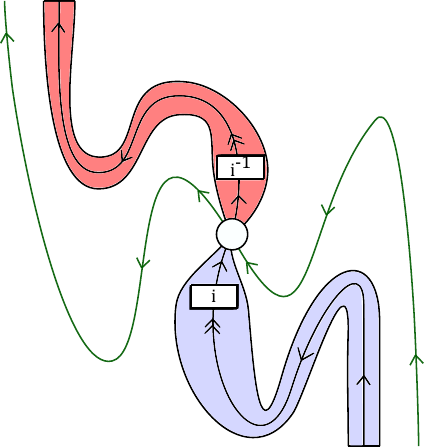}
~~=~~
\includegraphics[scale=0.5,valign=c]{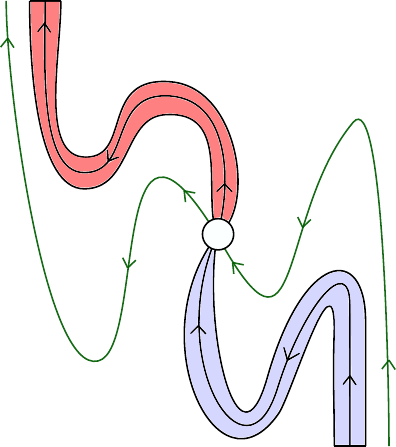}
\\\nonumber
~~=~~
\includegraphics[scale=0.7,valign=c]{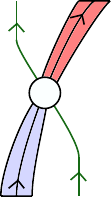}
\end{calign}
Here the first equality is by definition; the second uses~\eqref{eq:ldualisrdual}; the third uses the definition~\eqref{eq:pivldualdef} of the left duality in the pivotal 2-category $\mathcal{D}$; the fourth uses naturality of $\alpha$ to insert $\iota  \iota^{-1}$, where $\iota: X^{**} \to X$ is the isomorphism from the pivotal structure in $\mathcal{C}$; the fifth uses the definition~\eqref{eq:pivldualdef} of the left duality in $\mathcal{C}$; and the last uses the snake equations in $\mathcal{C}$ and $\mathcal{D}$. 

Finally, we need to show that $\hat{\iota}$ is an invertible icon ${**}_{\Fun(\mathcal{C},\mathcal{D})} \to \id_{\Fun(\mathcal{C},\mathcal{D})}$.
\begin{itemize}
\item \emph{Monoidality}: For every pair of pseudonatural transformations $\alpha: F \to G$, $\beta: G \to H$, we need $\hat{\iota}_{\alpha \otimes \beta} = \hat{\iota}_{\alpha} \otimes \hat{\iota}_{\beta}$. For each $X: r \to s$ this is implied by monoidality of $\iota: **_{\mathcal{D}} \to \id_{\mathcal{D}}$. Indeed, we have:
$$
(\hat{\iota}_{\alpha \otimes \beta})_r
= \iota_{\alpha_r \otimes \beta_r}
= \iota_{\alpha_r} \otimes \iota_{\beta_r} = (\hat{\iota}_{\alpha})_r \otimes (\hat{\iota}_{\beta})_r
$$
\item \emph{Naturality}: We need that, for every modification $f: \alpha \to \beta$, $\hat{\iota}_{\beta} \circ f^{**} = f \circ \hat{\iota}_{\alpha}$. For each $X: r \to s$ this is implied by naturality of $\iota: **_{\mathcal{D}} \to \id_{\mathcal{D}}$. Indeed, we have:
$$
(f \circ \hat{\iota}_{\alpha})_r = (f)_r \circ (\hat{\iota}_\alpha)_r
= f_r \circ \iota_{\alpha_r} = \iota_{\beta_r} \circ f_r = (\hat{\iota}_{\beta})_r \circ f_r
= (\hat{\iota}_{\beta} \circ f)_r
$$
\end{itemize}
\end{proof}

\section{Unitary pseudonatural transformations}\label{sec:unitary}

We have considered the case where $\mathcal{C},\mathcal{D}$ are pivotal. We now consider the case where $\mathcal{C},\mathcal{D}$ are pivotal dagger and the pseudofunctors are unitary.

In this case, we get a new contravariant operation on pseudonatural transformations.

\begin{lemma}
Let $F,G: \mathcal{C} \to \mathcal{D}$ be unitary pseudofunctors between pivotal dagger 2-categories. For any pseudonatural transformation $\alpha: F \to G$, there is a pseudonatural transformation $\alpha^{\dagger}: G \to F$ (its \emph{dagger}), defined as follows:
\begin{itemize}
\item For each object $r$ of $\mathcal{C}$, $(\alpha^{\dagger})_r = (\alpha_r)^*$.
\item For each $X: r \to s$ in $\mathcal{C}$, $(\alpha^{\dagger})_X$ is defined as follows:
\begin{calign}\label{eq:daggerpnt}
\includegraphics[scale=0.7,valign=c]{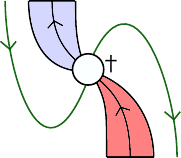}
\end{calign}
\end{itemize}
is also a pseudonatural transformation.
\end{lemma}
\begin{proof}
We must show naturality and monoidality.
\begin{itemize}
\item \emph{Naturality.} For any $f: X\to Y$ in $\mathcal{C}$:
\begin{calign}\nonumber
\includegraphics[scale=0.7,valign=c]{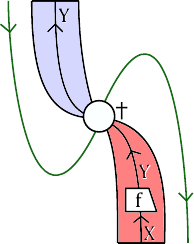}
~~=~~
\includegraphics[scale=0.7,valign=c]{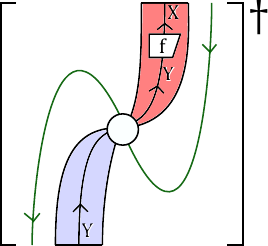}
~~=~~
\includegraphics[scale=0.7,valign=c]{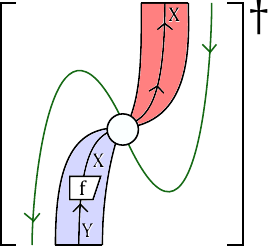}
~~=~~
\includegraphics[scale=0.7,valign=c]{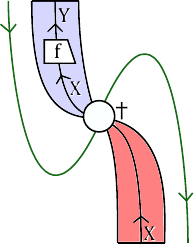}
\end{calign}
Here the first equality is by unitarity of $G$, the second equality is by naturality of $\alpha$, and the third equality is by unitarity of $F$.
\item \emph{Monoidality.} For any $X: r \to s, Y: s \to t$ in $\mathcal{C}$:
\begin{calign}\nonumber
\includegraphics[scale=0.7,valign=c]{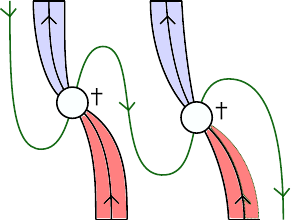}
~~=~~
\includegraphics[scale=0.7,valign=c]{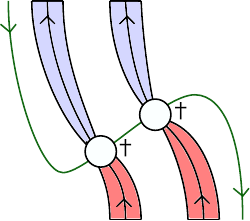}
~~=~~
\includegraphics[scale=0.7,valign=c]{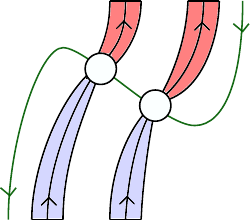}\\\nonumber
~~=~~
\includegraphics[scale=0.7,valign=c]{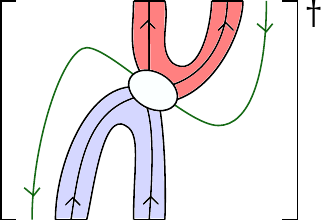}
~~=~~
\includegraphics[scale=0.7,valign=c]{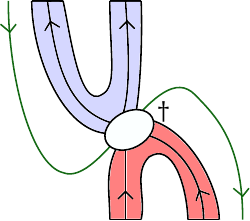}
\end{calign}
Here the first and second equalities are by dagger pivotality of $\mathcal{D}$, the third equality is by monoidality of $\alpha$, and the fourth equality is by unitarity of $F,G$ and dagger pivotality of $\mathcal{D}$. 

We leave the other monoidality condition~\eqref{eq:pntmonunit} to the reader. 
\end{itemize}
\end{proof}
\noindent
We would like $\Fun(\mathcal{C},\mathcal{D})$ to inherit the structure of a dagger  2-category. In general, however, there is no reason why the componentwise dagger of a modification $f: \alpha \to \beta$ --- the only reasonable candidate for a dagger on $\Fun(\mathcal{C},\mathcal{D})$ --- should yield a modification $f^{\dagger}: \beta \to \alpha$.

This problem is resolved by restriction to unitary pseudonatural transformations. There are  two obvious ways to define unitarity. First, given that the dual is the `inverse' of a pseudonatural transformation, we could ask that the dagger~\eqref{eq:daggerpnt} of the transformation should be equal to the right dual~\eqref{eq:dualpnt}. Alternatively, by analogy with the definition of unitary monoidal natural transformations, and motivated by  physicality in quantum mechanics~\cite{Vicary2012}, we might demand that the components of the transformation be individually unitary in $\mathcal{D}$. In fact, these definitions are equivalent.
\begin{lemma}\label{lem:unitaritydefsequiv}
Let $\mathcal{C},\mathcal{D}$ be pivotal dagger 2-categories and let $\alpha: F \to G$ be a pseudonatural transformation between unitary pseudofunctors $F,G: \mathcal{C} \to \mathcal{D}$. The following are equivalent:
\begin{enumerate}
\item There is an equality of pseudonatural transformations $\alpha^* = \alpha^{\dagger}$.
\item For all 1-morphisms $X: r \to s$ in $\mathcal{C}$, the component $\alpha_X: F(X) \otimes \alpha_s \to \alpha_r \otimes G(X)$ is unitary.
\end{enumerate}
\end{lemma} 
\begin{proof}
(i) $\Rightarrow$ (ii): For all $X: r \to s$ in $\mathcal{C}$, unitarity of $\alpha_X$ follows from right duality:
\begin{calign}\nonumber
\includegraphics[valign=c]{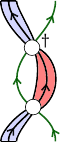}
~~=~~
\includegraphics[valign=c]{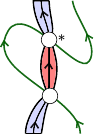}
~~=~~
\includegraphics[valign=c]{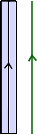}
&
\includegraphics[valign=c]{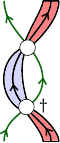}
~~=~~
\includegraphics[valign=c]{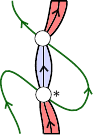}
~~=~~
\includegraphics[valign=c]{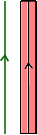}
\end{calign}
(ii) $\Rightarrow$ (i): Unitarity of the components implies that $[\alpha^{\dagger},\eta,\epsilon]$ is a right dual, where $\eta,\epsilon$ are the cup and cap of the right dual $[\alpha^*,\eta,\epsilon]$, since for each component:
\begin{calign}\nonumber
\includegraphics[valign=c]{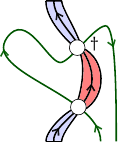}
~~=~~
\includegraphics[valign=c]{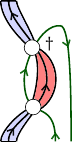}
~~=~~
\includegraphics[valign=c]{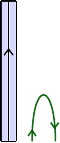}
&&
\includegraphics[valign=c]{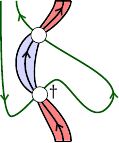}
~~=~~
\includegraphics[valign=c]{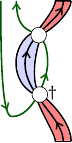}
~~=~~
\includegraphics[valign=c]{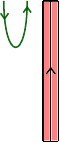}
\end{calign}
\noindent
But this implies equality $\alpha^{\dagger} = \alpha^{*}$; indeed, since the cup and cap modifications are identical, the unique 2-isomorphism of Lemma~\ref{lem:relateduals} relating the two right duals in $\Fun(\mathcal{C},\mathcal{D})$ must be the identity.
\end{proof}
\noindent
We therefore make the following definition.
\begin{definition}
Let $\mathcal{C},\mathcal{D}$ be pivotal dagger 2-categories and let $F,G: \mathcal{C} \to \mathcal{D}$ be unitary pseudofunctors. Then a \emph{unitary pseudonatural transformation (UPT)} $\alpha: F \to G$ is a pseudonatural transformation such that either of the following equivalent conditions are satisfied:
\begin{itemize}
\item There is an equality of pseudonatural transformations $\alpha^* = \alpha^{\dagger}$.
\item For all 1-morphisms $X: r \to s$ in $\mathcal{C}$, the component $\alpha_X: F(X) \otimes \alpha_s \to \alpha_r \otimes G(X)$ is unitary.
\end{itemize} 
\end{definition}
\noindent
\begin{definition}
When $\mathcal{C},\mathcal{D}$ are pivotal dagger, let $\Fun_u(\mathcal{C},\mathcal{D}) \subset \Fun(\mathcal{C},\mathcal{D})$ be the subcategory  whose objects are unitary pseudofunctors and whose 1-morphisms are UPTs.
\end{definition}
\noindent
We now show that $\Fun_u(\mathcal{C},\mathcal{D})$ is a dagger 2-category.  Moreover, it is pivotal dagger, with no need to restrict to pivotal functors. 
\begin{theorem}\label{thm:funcddagger}
Let $\mathcal{C},\mathcal{D}$ be pivotal dagger 2-categories. Then the 2-category $\Fun_u(\mathcal{C},\mathcal{D})$ is pivotal dagger, where: 
\begin{itemize}
\item The dagger of a modification $f: \alpha \to \beta$ is defined on components as $(f^{\dagger})_r = (f_r)^{\dagger}$. 
\item The pivotal structure $\hat{\iota}: **_{\Fun_u(\mathcal{C},\mathcal{D})} \to \id_{\Fun_u(\mathcal{C},\mathcal{D})}$ assigns to every pseudonatural transformation $\alpha^{**}: F \to G$ the invertible modification $\hat{\iota}_{\alpha}: \alpha^{**} \to \alpha$ whose components are the 2-isomorphisms $\iota_{\alpha_r}: \alpha_r^{**} \to \alpha_r$ from the pivotal structure on $\mathcal{D}$.
\end{itemize}
\end{theorem}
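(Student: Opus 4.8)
The plan is to establish the three ingredients of a pivotal dagger 2-category (Definition~\ref{def:pivdagcat}) for $\Fun(\mathcal{C},\mathcal{D})$, after the restriction to unitary pseudofunctors and UPTs: that it carries a dagger, that it is pivotal, and that its chosen right duals are dagger duals. I begin with the dagger. The only point requiring work is that, given UPTs $\alpha,\beta\colon F\to G$ and a modification $f\colon\alpha\to\beta$, the componentwise formula $(f^{\dagger})_r:=(f_r)^{\dagger}$ does define a modification $f^{\dagger}\colon\beta\to\alpha$. I would obtain this by applying $\dagger_{\mathcal{D}}$ to the modification equation for $f$, using contravariance of the dagger and its compatibility with horizontal composition, and then --- crucially --- using that the 2-morphism components $\alpha_X$ and $\beta_X$ are unitary, which is the defining property of a UPT, to rewrite $\alpha_X^{\dagger},\beta_X^{\dagger}$ as $\alpha_X^{-1},\beta_X^{-1}$ and rearrange the result into the modification equation for $f^{\dagger}$. (This is precisely the failure flagged before the statement that the restriction to UPTs repairs.) Involutivity, functoriality on each hom-category, and compatibility with horizontal composition of modifications are then immediate, since all these operations are defined componentwise and the corresponding facts hold in $\mathcal{D}$.

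For pivotality I would re-run the proof of Theorem~\ref{thm:pivinduced}, again taking $\hat{\iota}_{\alpha}$ to have components $\iota_{\alpha_r}$. The verification that $\hat{\iota}$ satisfies the icon axioms (monoidality, naturality) and that $\hat{\iota}_{\alpha}$ is a modification onto its conjugate $(\alpha^{**})^{\hat{\iota}_{\alpha}}$ is componentwise and goes through unchanged. The single use of pivotality of the functors in that proof was equation~\eqref{eq:ldualisrdual}, which identifies the chosen left dual of a pseudonatural transformation with its chosen right dual and is then fed into the computation showing $(\alpha^{**})^{\hat{\iota}_{\alpha}}=\alpha$. For a UPT I would supply this identification from the pivotal dagger structure of $\mathcal{D}$ instead: since $\alpha^{*}=\alpha^{\dagger}$ by definition of a UPT, and since $\Fun(\mathcal{C},\mathcal{D})$ is now a dagger 2-category, the right dual $\alpha^{*}$ is automatically also a left dual of $\alpha$ with the daggered cup and cap modifications; these coincide componentwise, on the nose, with the cup and cap of the chosen left dual ${}^{*}\alpha$ of Theorem~\ref{thm:pntduals}, because in the pivotal dagger 2-category $\mathcal{D}$ the chosen left dual of each $\alpha_r$ is precisely the dagger of its chosen right dual (equation~\eqref{eq:pivdagcat}). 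Hence the comparison 2-isomorphism of Proposition~\ref{prop:relateduals} is the identity, so ${}^{*}\alpha=\alpha^{*}$, and the remainder of the computation of Theorem~\ref{thm:pivinduced} carries over.

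It remains to check that the chosen right duals in $\Fun(\mathcal{C},\mathcal{D})$ (which exist by Corollary~\ref{cor:dualsexistence}) are dagger duals. For a UPT $\alpha$ the chosen right dual is $\alpha^{*}$ with cup and cap modifications $\eta,\epsilon$ whose components are $\eta_r,\epsilon_r$; condition~\eqref{eq:pivdagcat} for $\Fun(\mathcal{C},\mathcal{D})$ asks that $\eta$ be the dagger of the reflected $\epsilon$ and $\epsilon$ the dagger of the reflected $\eta$, which --- since daggers, reflections and the modification structure are all componentwise --- reduces to the statement that $[\alpha_r^{*},\eta_r,\epsilon_r]$ is a dagger dual in $\mathcal{D}$, true because $\mathcal{D}$ is pivotal dagger. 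As a byproduct, the 2-isomorphisms $\iota_{\alpha_r}$, hence the modifications $\hat{\iota}_{\alpha}$, are unitary, by Proposition~\ref{prop:pivdagisometry} applied in $\mathcal{D}$.

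I expect the pivotality step to be the main obstacle: concretely, one must be confident that pivotality of the functors $F,G$ is genuinely used \emph{only} through equation~\eqref{eq:ldualisrdual} in the proof of Theorem~\ref{thm:pivinduced}, i.e.\ that the double-dual component $(\alpha^{**})_X$ can be unfolded, via equation~\eqref{eq:dualpnt} applied twice together with the conjugation formula~\eqref{eq:conjpnt}, using only the pivotal dagger data of $\mathcal{D}$ and the snake equations, and then seen to equal the $\iota$-conjugate of $\alpha_X$. Everything else is componentwise bookkeeping or a direct appeal to structure already present in $\mathcal{D}$.
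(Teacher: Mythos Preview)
Your proposal is correct. The dagger step and the dagger-duals step match the paper's argument essentially verbatim. The genuine difference is in the pivotality step.

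You patch Theorem~\ref{thm:pivinduced}: having established the dagger on $\Fun(\mathcal{C},\mathcal{D})$, you observe that the daggered cup and cap of the right dual $\alpha^*$ are modifications exhibiting $\alpha^*$ as a left dual, and that componentwise these agree with the cup and cap of the chosen left dual ${}^*\alpha$ from Theorem~\ref{thm:pntduals} by~\eqref{eq:pivdagcat}; Proposition~\ref{prop:relateduals} then forces ${}^*\alpha=\alpha^*$, and you feed this into the long computation of Theorem~\ref{thm:pivinduced} in place of~\eqref{eq:ldualisrdual}. The paper instead bypasses that computation entirely: since $\alpha^*=\alpha^{\dagger}$ for a UPT, one has $\alpha^{**}=\alpha^{\dagger\dagger}$, and applying the explicit dagger formula~\eqref{eq:daggerpnt} twice together with Proposition~\ref{prop:pivdagisometry} yields $(\alpha^{**})_X$ directly as the $\iota$-conjugate of $\alpha_X$, from which the modification property of $\hat{\iota}_\alpha$ is immediate. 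Your route is more modular and makes precise that functor pivotality enters Theorem~\ref{thm:pivinduced} only through~\eqref{eq:ldualisrdual}; the paper's route is shorter and avoids revisiting the diagram chase. It is also worth noting that the paper derives ${}^*\alpha=\alpha^*$ only \emph{after} Theorem~\ref{thm:funcddagger}, as a corollary of the pivotal structure just constructed, whereas you obtain it beforehand from the dagger alone; both orderings are sound.
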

\begin{proof}
We first show that $f^{\dagger}$ is a modification $\beta \to \alpha$: 
\begin{calign}\nonumber
\includegraphics[valign=c]{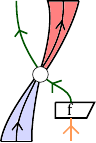}
~~=~~
\includegraphics[valign=c]{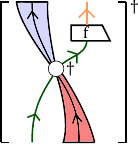}
~~=~~
\includegraphics[valign=c]{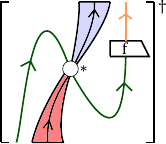}
~~=~~
\includegraphics[valign=c]{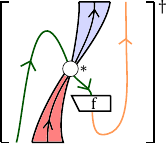}\\\nonumber
~~=~~
\includegraphics[valign=c]{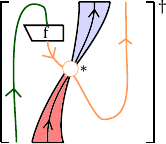}
~~=~~
\includegraphics[valign=c]{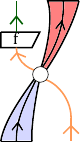}
\end{calign}
Here the second equality is by unitarity of $\alpha$, and the fourth equality is by transposition in $\Fun_u(\mathcal{C},\mathcal{D})$.

$\Fun_u(\mathcal{C},\mathcal{D})$ is therefore a dagger 2-category. We now show that it is pivotal dagger. First we demonstrate that $\hat{\iota}$ is indeed a pivotal structure. Since by Lemma~\ref{lem:unitaritydefsequiv} we have $\alpha^* = \alpha^{\dagger}$, we have the following expression for the components of $\alpha^{**}= \alpha^{\dagger \dagger}$:
\begin{calign}\label{eq:doubledagger}
\includegraphics[valign=c]{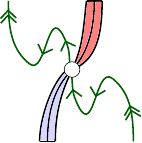}
~~=~~
\includegraphics[valign=c]{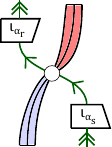}
\end{calign}
Here the last equality is by Lemma~\ref{lem:pivdagisometry}. We claim that $\hat{\iota}_{\alpha}: \alpha^{**} \to \alpha$ is a modification. Indeed, by~\eqref{eq:doubledagger} and unitarity of $\{\iota_X\}$ we clearly have:
\begin{calign}\nonumber
\includegraphics[valign=c]{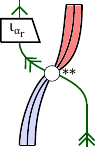}
~~=~~
\includegraphics[valign=c]{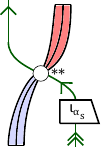}
\end{calign}
The proof that $\hat{\iota}$ is an invertible icon ${**}_{\Fun(\mathcal{C},\mathcal{D})} \to \id_{\Fun(\mathcal{C},\mathcal{D})}$, i.e. that the transformation is monoidal and natural, is given at the end of the proof of Proposition~\ref{prop:pivinduced}.

Finally, we must show that the duals of $\Fun_u(\mathcal{C},\mathcal{D})$ are dagger duals~\eqref{eq:pivdagcat}. This follows from the fact that the dagger of a modification is taken componentwise, and the cup and cap for each component come from the pivotal dagger structure in $\mathcal{D}$.
\end{proof}
\begin{corollary}
Let $\mathcal{C},\mathcal{D}$ be pivotal dagger 2-categories and let $\alpha: F_1 \to F_2$ be a UPT between pseudofunctors $\mathcal{C} \to \mathcal{D}$. Then the right dual $\alpha^*$ defined in Lemma~\ref{lem:pntduals} is equal to the left dual ${}^*\alpha$ defined in Lemma~\ref{lem:pntduals}.
\end{corollary}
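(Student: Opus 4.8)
The plan is to realise both ${}^*\alpha$ and $\alpha^*$ as left duals of $\alpha$ inside the pivotal dagger 2-category $\Fun(\mathcal{C},\mathcal{D})$ (restricted to unitary pseudofunctors and UPTs) and then to invoke uniqueness of duals. First I would note that since $\mathcal{C}$ is pivotal it has both left and right duals, so Theorem~\ref{thm:pntduals} applies in both directions and both $\alpha^*$ and ${}^*\alpha$ are defined; on objects their components are $(\alpha_r)^*$ and ${}^*(\alpha_r)$, which agree as $1$-morphisms of $\mathcal{D}$ under the standing convention that in a pivotal $2$-category the chosen left dual is the chosen right dual equipped with the cup and cap~\eqref{eq:pivldualdef}. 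By Theorem~\ref{thm:funcddagger} the $2$-category $\Fun(\mathcal{C},\mathcal{D})$ is pivotal; hence, applying~\eqref{eq:pivldualdef} inside $\Fun(\mathcal{C},\mathcal{D})$, the chosen right dual $\alpha^*$ of $\alpha$ is canonically a left dual of $\alpha$, with cup and cap modifications assembled from the right-dual modifications $\eta,\epsilon$ of $\alpha^*$ and the pivotal modification $\hat{\iota}_\alpha$ of Theorem~\ref{thm:funcddagger}.

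Next I would compute these induced left-dual modifications componentwise. Since modifications compose and tensor componentwise, and $\hat{\iota}_\alpha$ has components $\iota_{\alpha_r}$ while $\eta,\epsilon$ have components $\eta_r,\epsilon_r$, the left-dual cup and cap that $\Fun(\mathcal{C},\mathcal{D})$'s pivotal structure puts on $\alpha^*$ have, at each object $r$, exactly the data obtained by applying~\eqref{eq:pivldualdef} to $\alpha_r$ in $\mathcal{D}$ --- that is, precisely the chosen left dual $[{}^*(\alpha_r),\eta_r,\epsilon_r]$ of $\alpha_r$ that Theorem~\ref{thm:pntduals} uses to build ${}^*\alpha$. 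Thus $\alpha^*$ (with its induced left-dual structure) and ${}^*\alpha$ are left duals of $\alpha$ in $\Fun(\mathcal{C},\mathcal{D})$ sharing the same underlying pseudonatural transformation on objects and the same cup and cap modifications. (One should also check that ${}^*\alpha$ is itself a UPT, so that it is a $1$-morphism of the restricted $\Fun(\mathcal{C},\mathcal{D})$; this is immediate from~\eqref{eq:dualpnt}, since transposes and daggers of unitaries along dagger duals in the pivotal dagger category $\mathcal{D}$ are again unitary.) Finally I would apply the left-dual version of Proposition~\ref{prop:relateduals} to these two left duals: there is a unique modification ${}^*\alpha \to \alpha^*$ intertwining the two sets of cup and cap data; since that data coincides, the identity modification does the job, so the intertwiner is the identity and ${}^*\alpha = \alpha^*$. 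In particular the structure $2$-morphisms $({}^*\alpha)_X$ and $(\alpha^*)_X$ agree for every $X: r \to s$.

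I expect the only real work here to be bookkeeping rather than a genuine obstacle: one must verify carefully that the left-dual structure $\Fun(\mathcal{C},\mathcal{D})$'s pivotal structure induces on $\alpha^*$ restricts on each component to the canonical left dual of $\alpha_r$ in $\mathcal{D}$, i.e. that the convention of Theorem~\ref{thm:pntduals} and the convention of Definition~\ref{def:pivcat} together with equation~\eqref{eq:pivldualdef} genuinely coincide. It is worth noting why the abstract route is preferable to a direct diagrammatic comparison of the formulas~\eqref{eq:dualpnt}: here $F_1,F_2$ are \emph{not} assumed pivotal, so the identity~\eqref{eq:ldualisrdual} used in the proof of Theorem~\ref{thm:pivinduced} is not available, and the two formulas for $({}^*\alpha)_X$ and $(\alpha^*)_X$ differ superficially through the appearance of $F_i$ applied to the (non-equal but isomorphic) duals ${}^*X$ and $X^*$ in $\mathcal{C}$.

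As an alternative that sidesteps the pivotal bookkeeping, one can combine Proposition~\ref{prop:unitaritydefsequiv} (which gives $\alpha^* = \alpha^{\dagger}$ for a UPT $\alpha$) with the mirror-image statement ${}^*\alpha = \alpha^{\dagger}$, proved by the left-handed analogue of the argument for Proposition~\ref{prop:unitaritydefsequiv}: unitarity of the components $\alpha_X$ makes $[\alpha^{\dagger},\eta',\epsilon']$ a left dual of $\alpha$ with the cup and cap of ${}^*\alpha$, and uniqueness of duals (Proposition~\ref{prop:relateduals}) then forces $\alpha^{\dagger} = {}^*\alpha$; chaining the two equalities yields ${}^*\alpha = \alpha^*$.
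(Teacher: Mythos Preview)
Your main approach is essentially the paper's: both show that $\alpha^*$, equipped with the left cup and cap coming from the pivotal structure (i.e.\ the componentwise left duals of $\alpha_r$ in $\mathcal{D}$), is a left dual of $\alpha$ in $\Fun(\mathcal{C},\mathcal{D})$ with the same cup and cap modifications as ${}^*\alpha$, and then invoke Proposition~\ref{prop:relateduals}. The paper does this by directly verifying the pullthrough (modification) equation for the left cup from the one for the right cup and $\iota_{\alpha_r}$; you instead package that verification into the statement that $\Fun(\mathcal{C},\mathcal{D})$ is pivotal (Theorem~\ref{thm:funcddagger}) and read off the componentwise description of its left duality. This is the same argument at a slightly higher level of abstraction.

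Your alternative route through $\alpha^{\dagger}$ is a genuinely different and somewhat slicker argument: rather than going via the double dual and $\hat{\iota}$, it uses Proposition~\ref{prop:unitaritydefsequiv} to identify $\alpha^* = \alpha^{\dagger}$, and then the mirror argument (unitarity of $\alpha_X$ makes $\alpha^{\dagger}$ a left dual with the cup and cap of ${}^*\alpha$, forcing ${}^*\alpha = \alpha^{\dagger}$ by Proposition~\ref{prop:relateduals}). This bypasses the pivotal bookkeeping entirely and uses only the dagger structure, at the cost of needing to state and check the left-handed analogue of Proposition~\ref{prop:unitaritydefsequiv}; the paper's route instead reuses the pivotal machinery already built in Theorem~\ref{thm:funcddagger}.
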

\begin{proof}
For every 1-morphism $X: r \to s$ of $\mathcal{C}$ the right dual UPT satisfies the following equation with respect to the double right dual UPT:
\begin{calign}
\includegraphics[valign=c]{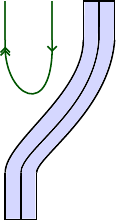}
~~=~~
\includegraphics[valign=c]{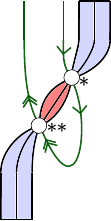}
~~=~~
\includegraphics[valign=c]{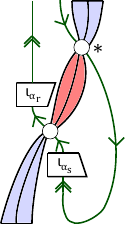}
\end{calign}
Postcomposing the leftmost and rightmost expressions by $\iota_{\alpha_r} \otimes \id_{\alpha_r^*} \otimes \id_{F_1(X)}$, we obtain the following pullthrough equation for the cup of the left duality:
\begin{calign}
\includegraphics[valign=c]{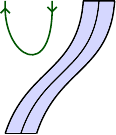}
~~=~~
\includegraphics[valign=c]{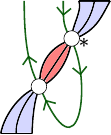}
\end{calign}
A similar pullthrough equation can be obtained for the cap of the left duality. It follows that $\alpha^*$ is a left dual of $\alpha$ with the same cup and cap as the chosen left dual of $\alpha$. We must therefore have $\alpha^*={}^*\alpha$ by Lemma~\ref{lem:relateduals} in $\Fun_u(\mathcal{C},\mathcal{D})$.
\end{proof}

\section{Morita theory for fibre functors on $\Rep(G)$}\label{sec:apps}

We finish by discussing an application of the results in this work. One reason for proving that $\Fun_u(\mathcal{C},\mathcal{D})$ is a pivotal dagger 2-category is that this provides an appropriate setting for Morita theory, which relates 1-morphisms out of an object $r$ to Frobenius monoids in its pivotal dagger category of endomorphisms $\End(r):=\Hom(r,r)$.

\begin{definition}\label{def:Frobenius}
A \emph{monoid} in a monoidal dagger category is an object $A$ with multiplication and unit morphisms, depicted as follows:%
\begin{calign}
\begin{tz}[zx,master]
\coordinate (A) at (0,0);
\draw (0.75,1) to (0.75,2);
\mult{A}{1.5}{1}
\end{tz}
&
\begin{tz}[zx,slave]
\coordinate (A) at (0.75,2);
\unit{A}{1}
\end{tz}
\\[0pt]\nonumber
m:A\otimes A \to A& u: \mathbbm{1} \to A 
\end{calign}\hspace{-0.2cm}
These morphisms satisfy the following associativity and unitality equations:
\begin{calign}\label{eq:assocandunitality}
\begin{tz}[zx]
\coordinate(A) at (0.25,0);
\draw (1,1) to [out=up, in=-135] (1.75,2);
\draw (1.75,2) to [out=-45, in=up] (3.25,0);
\draw (1.75,2) to (1.75,3);
\mult{A}{1.5}{1}
\node[zxvertex=\zxwhite,zxdown] at (1.75,2){};
\end{tz}
\quad = \quad
\begin{tz}[zx,xscale=-1]
\coordinate(A) at (0.25,0);
\draw (1,1) to [out=up, in=-135] (1.75,2);
\draw (1.75,2) to [out=-45, in=up] (3.25,0);
\draw (1.75,2) to (1.75,3);
\mult{A}{1.5}{1}
\node[zxvertex=\zxwhite,zxdown] at (1.75,2){};
\end{tz}
&
\begin{tz}[zx]
\coordinate (A) at (0,0);
\draw (0,-0.25) to (0,0);
\draw (0.75,1) to (0.75,2);
\mult{A}{1.5}{1}
\node[zxvertex=\zxwhite,zxdown] at (1.5,0){};
\end{tz}
\quad =\quad
\begin{tz}[zx]
\draw (0,0) to (0,2);
\end{tz}
\quad= \quad
\begin{tz}[zx,xscale=-1]
\coordinate (A) at (0,0);
\draw (0,-0.25) to (0,0);
\draw (0.75,1) to (0.75,2);
\mult{A}{1.5}{1}
\node[zxvertex=\zxwhite,zxdown] at (1.5,0){};
\end{tz}
\end{calign}
Analogously, a \textit{comonoid} is an object $A$ with a coassociative comultiplication $\delta: A \to A\otimes A$ and a counit $\epsilon:A\to \mathbbm{1}$. The dagger of an monoid $(A,m,u)$ is a comonoid $(A,m^{\dagger},u^{\dagger})$.  A monoid $(A,m,u)$ is called \textit{Frobenius} if the monoid and adjoint comonoid structures are related by the following \emph{Frobenius equation}:
\begin{calign}\label{eq:Frobenius}
\begin{tz}[zx]
\draw (0,0) to [out=up, in=-135] (0.75,2) to (0.75,3);
\draw (0.75,2) to [out=-45, in=135] (2.25,1);
\draw (2.25,0) to (2.25,1) to [out=45, in=down] (3,3);
\node[zxvertex=\zxwhite,zxup] at (2.25,1){};
\node[zxvertex=\zxwhite,zxdown] at (0.75,2){};
\end{tz}
\quad = \quad
\begin{tz}[zx]
\coordinate (A) at (0,0);
\coordinate (B) at (0,3);
\draw (0.75,1) to (0.75,2);
\mult{A}{1.5}{1}
\comult{B}{1.5}{1}
\end{tz}
\quad = \quad 
\begin{tz}[zx,xscale=-1]
\draw (0,0) to [out=up, in=-135] (0.75,2) to (0.75,3);
\draw (0.75,2) to [out=-45, in=135] (2.25,1);
\draw (2.25,0) to (2.25,1) to [out=45, in=down] (3,3);
\node[zxvertex=\zxwhite,zxup] at (2.25,1){};
\node[zxvertex=\zxwhite,zxdown] at (0.75,2){};
\end{tz}
\end{calign}
A Frobenius monoid is \emph{special} if the following equation is satisfied:
\begin{calign}
\begin{tz}[zx,every to/.style={out=up, in=down}]\draw (0,0) to (0,1) to [out=135] (-0.75,2) to [in=-135] (0,3) to (0,4);
\draw (0,1) to [out=45] (0.75,2) to [in=-45] (0,3);
\node[zxvertex=\zxwhite, zxup] at (0,1){};
\node[zxvertex=\zxwhite,zxdown] at (0,3){};\end{tz}
\quad = \quad 
\begin{tz}[zx]
\draw (0,0) to +(0,4);
\end{tz}
\end{calign}
\end{definition}
\noindent
Let $\mathcal{C}$ be a pivotal dagger 2-category. We say that $\mathcal{C}$ is \emph{$\mathbb{C}$-linear} if the $2$-morphism sets are complex vector spaces such that horizontal and vertical composition of 2-morphisms are bilinear maps and the dagger is an antilinear map. We assume additionally that, for any 2-morphism $f: X \to Y$, $f^{\dagger} \circ f = 0$ implies $f=0$. We say that an object $r$ of $\mathcal{C}$ is \emph{simple} if $\Hom(\id_r,\id_r) \cong \mathbb{C}$.

The variant of Morita theory consider is essentially as follows. Let $\mathcal{C}$ be a $\mathbb{C}$-linear pivotal dagger 2-category, let $s$ be a simple object, and let $X :r \to s$ be a 1-morphism. Let $d_X$ be the nonzero scalar such that $\dim_L(X) = d_X \id_s$. Observe that $\End(r)$ is a monoidal dagger category. Then, making use of the left duality in the pivotal dagger 2-category, the object $X \otimes X^*$ in $\End(r)$ acquires the structure of a special Frobenius monoid with the following multiplication and unit morphisms:
\begin{calign}\label{eq:frobeniusmorita}
\frac{1}{\sqrt{d_X}}~~
\includegraphics[valign=c]{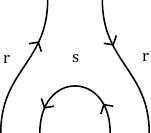}
&&
\sqrt{d_X}~~
\includegraphics[valign=c]{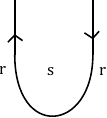}
\end{calign}
We will see that 1-morphisms from $r$ to simple objects can in fact be classified in terms of relations between their corresponding special Frobenius monoids. 

We are here particularly interested in the pivotal dagger 2-category $\Fun_u(\Rep(G),\Hilb)$, where $\Rep(G)$ is the pivotal dagger category of continuous finite-dimensional unitary representations of a compact quantum group $G$ and $\Hilb$ is the category of Hilbert spaces and linear maps. We restrict to $\mathbb{C}$-linear unitary monoidal functors, which we call \emph{fibre functors}. 
It is not important for our purposes here to discuss the definition of the category $\Rep(G)$ (see e.g.~\cite[\S{}2.3.2]{Verdon2020} for this). All that matters here is that $\Rep(G)$ is a pivotal dagger category with a privileged \emph{canonical fibre functor} $F: \Rep(G) \to \Hilb$.

In this case, since $\Rep(G)$ and $\Hilb$ are one-object 2-categories, we obtain a simpler description of UPTs and modifications. In particular:

\begin{itemize}
\item Let $F_1,F_2$ be fibre functors on $\Rep(G)$.
A unitary pseudonatural transformation $(\alpha,H): F_1 \to F_2$ is defined by the following data:
\begin{itemize}
\item An Hilbert space $H$ (drawn as a green wire).
\item For every object $X$ of $\Rep(G)$, a unitary $\alpha_X: F_1(X) \otimes H \to H \otimes F_2(X)$ (drawn as a white vertex):
\begin{calign}
\includegraphics[scale=1.2,valign=c]{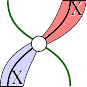}
\end{calign}
\end{itemize}
These unitaries must obey the naturality and monoidality conditions~(\ref{eq:pntnat}-\ref{eq:pntmonunit}). We call $\dim(H)$ the \emph{dimension} of the UPT.

\item Let $(\alpha,H), (\beta,H'): F_1 \to F_2$ be UPTs. (We colour the $H$-wire green and the $H'$-wire orange.) A modification $f: \alpha \to \beta$  is a linear map $f: H \to H'$ satisfying the following equation for all unitaries $\{\alpha_X,\beta_X\}$:
\begin{calign}\label{eq:uptmod}
\includegraphics[scale=1,valign=c]{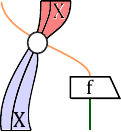}
~~=~~
\includegraphics[scale=1,valign=c]{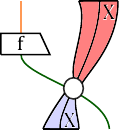}
\end{calign}
\end{itemize}
It is clear that $\Fun_u(\Rep(G),\Hilb)$ is $\mathbb{C}$-linear. Moreover, every object of $\Fun(\Rep(G),\Hilb)$ is simple.

In~\cite[\S{}3]{Verdon2020} we characterised the category $\End(F)$ of unitary pseudonatural transformations and modifications from the canonical fibre functor to itself, showing that it is isomorphic to the category $\Rep(A_G)$ of finite-dimensional $*$-representations of the \emph{compact quantum group algebra} $A_G$ associated to the compact quantum group $G$. 
Morita theory will therefore allow us to classify fibre functors accessible by a UPT from the canonical fibre functor, and UPTs from the canonical fibre functor, in terms of special Frobenius monoids in the category $\Rep(A_G)$.

\subsection{Classification of UPTs from the canonical fibre functor}
We begin with a technical definition.\begin{definition}
We say that a dagger 2-category has \emph{split dagger idempotents} if, for any 1-morphism $X: r \to s$ and any 2-morphism $\alpha: X \to X$ such that $\alpha=\alpha^{\dagger}=\alpha^2$ (we call such 2-morphisms \emph{dagger idempotent}), there exists a 1-morphism $V: r \to s$ and an isometry $\iota: V \to X$ such that $\iota \circ \iota^{\dagger} = \alpha$.
\end{definition}
\begin{lemma}
The category $\Fun_u(\mathcal{C},\mathcal{D})$ has split dagger idempotents if $\mathcal{D}$ has split dagger idempotents. 
\end{lemma}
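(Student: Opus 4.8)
The plan is to split $p$ componentwise in $\mathcal{D}$ and reassemble the pieces into a UPT. Since the dagger on $\Fun(\mathcal{C},\mathcal{D})$ and vertical composition of modifications are both computed componentwise (Theorem~\ref{thm:funcddagger}), the hypothesis $p = p^{\dagger} = p^2$ says precisely that each component $p_r : \alpha_r \to \alpha_r$ is a dagger idempotent in $\mathcal{D}$. As $\mathcal{D}$ has split dagger idempotents, for each object $r$ of $\mathcal{C}$ we choose a $1$-morphism $V_r : F(r) \to G(r)$ and an isometry $\iota_r : V_r \to \alpha_r$ with $\iota_r \circ \iota_r^{\dagger} = p_r$ and $\iota_r^{\dagger} \circ \iota_r = \id_{V_r}$.

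Next I would define a candidate pseudonatural transformation $\beta : F \to G$ by $\beta_r := V_r$ and, for each $1$-morphism $X : r \to s$ of $\mathcal{C}$,
\[
\beta_X \;:=\; (\iota_r^{\dagger} \otimes \id_{G(X)}) \circ \alpha_X \circ (\id_{F(X)} \otimes \iota_s),
\]
that is, the conjugate of the vertex $\alpha_X$ by the isometries. The key input is the modification equation for $p$ on $X$, namely $(p_r \otimes \id_{G(X)}) \circ \alpha_X = \alpha_X \circ (\id_{F(X)} \otimes p_s)$. Substituting $p_r = \iota_r \circ \iota_r^{\dagger}$ and $p_s = \iota_s \circ \iota_s^{\dagger}$, using unitarity of $\alpha_X$ and the isometry relations, a short computation gives $\beta_X^{\dagger} \circ \beta_X = \id$ and $\beta_X \circ \beta_X^{\dagger} = \id$, so every $\beta_X$ is unitary; hence once $\beta$ is shown to be a pseudonatural transformation it is automatically a UPT.

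The bulk of the argument is checking that $\beta$ satisfies naturality~\eqref{eq:pntnat} and monoidality~(\ref{eq:pntmon}--\ref{eq:pntmonunit}). Naturality follows by expanding $\beta$, applying naturality of $\alpha$, and shuffling functorial boxes with the interchange law and Lemma~\ref{lem:pushpast}, as in the proofs of Theorem~\ref{thm:pntduals} and Proposition~\ref{prop:unitaritydefsequiv}. For monoidality one stacks $\beta_X$ and $\beta_Y$: between the two adjacent $\alpha$-vertices a factor $\iota_s \circ \iota_s^{\dagger} = p_s$ appears, which is absorbed by the modification equation for $p$ on $Y$ together with $p_t \circ \iota_t = \iota_t$, after which the monoidality axiom for $\alpha$ and a further round of interchange with the multiplicators $m^{F}$, $m^{G}$ finishes the identity; the unit case~\eqref{eq:pntmonunit} is similar, using only $\iota_r^{\dagger} \circ \iota_r = \id$. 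I expect this multiplicator bookkeeping for~\eqref{eq:pntmon} to be the most involved step, but it is routine given that $p$ is a modification, and it is the only real obstacle.

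Finally I would check that $\iota := \{\iota_r\}$ is a modification $\iota : \beta \to \alpha$: the identity $(\iota_r \otimes \id_{G(X)}) \circ \beta_X = \alpha_X \circ (\id_{F(X)} \otimes \iota_s)$ is immediate from the definition of $\beta_X$, the relation $p_r = \iota_r \circ \iota_r^{\dagger}$, the modification equation for $p$, and $p_s \circ \iota_s = \iota_s$. Since the dagger on $\Fun(\mathcal{C},\mathcal{D})$ is componentwise, $\iota^{\dagger} \circ \iota$ has components $\iota_r^{\dagger} \circ \iota_r = \id_{V_r}$, so $\iota$ is an isometry, and $\iota \circ \iota^{\dagger}$ has components $\iota_r \circ \iota_r^{\dagger} = p_r$, so $\iota \circ \iota^{\dagger} = p$. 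Thus $\iota$ splits the dagger idempotent $p$ in $\Fun(\mathcal{C},\mathcal{D})$, as required.
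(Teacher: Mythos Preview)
Your proposal is correct and follows essentially the same approach as the paper: split the idempotent componentwise in $\mathcal{D}$, define the new UPT by conjugating each $\alpha_X$ by the splitting isometries, and observe that the isometries assemble into a modification. The paper's proof is terser, simply asserting that the conjugated transformation is a UPT and that the $\iota_r$ form a modification, whereas you have (correctly) sketched the verifications of unitarity, naturality, monoidality, and the modification equation that the paper leaves implicit.
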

\begin{proof}
Let $\alpha: F_1 \to F_2$ be a UPT and let $f: \alpha \to \alpha$ be a dagger idempotent modification. Since for each object $r$ of $\mathcal{C}$ the component $f_r:\alpha_r \to \alpha_r$ is itself a dagger idempotent in $\mathcal{D}$, there exist objects $I_r$ of $D$ and isometries $\iota_{f,r}: I_r \to \alpha_r$ such that: 
\begin{align}\label{eq:splittingconjisomeqs}
\iota_{f,r}^{\dagger} \circ \iota_{f,r}   = \id_{I_r} &&  \iota_{f,r} \circ \iota_{f,r}^{\dagger} = f_r
\end{align}
Now we define a new UPT $\alpha^{\iota_f}$ whose components $\{\alpha^{\iota_f}_X\}$ are given as follows:
\begin{calign}
\includegraphics[valign=c]{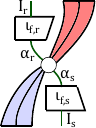}
\end{calign}
It is clear that this is a UPT and that $\iota_f$, with components defined as $(\iota_{f})_r = \iota_{f,r}$, is a modification $\alpha^{\iota_f} \to \alpha$ satisfying $\iota_f^{\dagger} \circ \iota_f = \id_{\alpha}$ and $\iota_f \circ \iota_f^{\dagger}= f$.
\end{proof}
\noindent
It immediately follows that $\Fun_u(\Rep(G),\Hilb)$ has split dagger idempotents, since $\Hilb$ does. In order to classify UPTs from the canonical fibre functor we will need a notion of equivalence of 1-morphisms.
\begin{definition}
Let $r,s,t$ be objects in a dagger 2-category $\mathcal{C}$. We say that two 1-morphisms $X: r \to s$ and $Y: r \to t$ are \emph{equivalent} if there exists a dagger equivalence $E: t \to s$ and a unitary 2-morphism $\tau: X \to Y \otimes E$.
\end{definition}
\noindent
In $\Fun_u(\Rep(G),\Hilb)$ equivalence of UPTs can be put in more familiar terms.
\begin{lemma}
Two UPTs $\alpha_1: F \to F_1$ and $\alpha_2: F \to F_2$ are equivalent in $\Fun_u(\Rep(G),\Hilb)$ if and only if there exists a unitary monoidal natural isomorphism $E: F_2 \to F_1$ and a unitary modification $\tau: \alpha_1 \to \alpha_2 \otimes E$.
\end{lemma}
\begin{proof}
Suppose that there is an equivalence $\alpha_1 \cong \alpha_2$. 
Let $[\tilde{E}:t \to s, \tilde{E}^{-1},\eta,\epsilon]$ be the data of the dagger equivalence $F_2 \to F_1$, and let $\tilde{\tau}: \alpha_1 \to \alpha_2 \otimes \tilde{E}$ be the unitary modification.

We first observe that $\eta$ is a unitary modification $\id_{F_1} \to \tilde{E}^{-1} \otimes \tilde{E}$. Considering underlying Hilbert spaces this yields a unitary map $\mathbb{C} \to H_{\tilde{E}^{-1}} \otimes H_{\tilde{E}}$, which implies that both these Hilbert spaces are one-dimensional. Therefore there is a unitary isomorphism $\omega: H_{\tilde{E}} \to \mathbb{C}$. Conjugating $\tilde{E}$ by this unitary isomorphism we obtain a unitary monoidal natural isomorphism $E: F_2 \to F_1$ (i.e. a UPT whose underlying Hilbert space is $\mathbb{C}$). Then $\tau:= (\id_{\alpha_2} \otimes \omega) \circ \tilde{\tau}$ is a unitary modification $\alpha_1 \to \alpha_2 \otimes E$.

In the other direction, a unitary monoidal natural isomorphism is a dagger equivalence: the weak inverse is the actual inverse, and the unitary modifications witnessing the equivalence are trivial.
\end{proof}
\noindent
We now define a corresponding equivalence relation for Frobenius monoids. 
\begin{definition}
Let $A,B$ be Frobenius monoids in a monoidal dagger category. We say that a morphism $f: A \to B$ is a \emph{$*$-homomorphism} precisely when it satisfies the following equations:
\begin{calign}\label{eq:homo}
\begin{tz}[zx, master, every to/.style={out=up, in=down},yscale=-1]
\draw (0,0) to (0,2) to [out=135] (-0.75,3);
\draw (0,2) to [out=45] (0.75, 3);
\node[zxnode=\zxwhite] at (0,1) {$f$};
\node[zxvertex=\zxwhite, zxdown] at (0,2) {};
\end{tz}
=
\begin{tz}[zx, every to/.style={out=up, in=down},yscale=-1]
\draw (0,0) to (0,0.75) to [out=135] (-0.75,1.75) to (-0.75,3);
\draw (0,0.75) to [out=45] (0.75, 1.75) to +(0,1.25);
\node[zxnode=\zxwhite] at (-0.75,2) {$f$};
\node[zxnode=\zxwhite] at (0.75,2) {$f$};
\node[zxvertex=\zxwhite, zxdown] at (0,0.75) {};
\end{tz}
&
\begin{tz}[zx,slave, every to/.style={out=up, in=down},yscale=-1]
\draw (0,0) to (0,2) ;
\node[zxnode=\zxwhite] at (0,1) {$f$};
\node[zxvertex=\zxwhite, zxup] at (0,2) {};
\end{tz}
=
\begin{tz}[zx,slave, every to/.style={out=up, in=down},yscale=-1]
\draw (0,0) to (0,0.75) ;
\node[zxvertex=\zxwhite, zxup] at (0,0.75) {};
\end{tz}
&
\begin{tz}[zx,slave, every to/.style={out=up, in=down},scale=-1]
\draw (0,0) to (0,3);
\node[zxnode=\zxwhite] at (0,1.5) {$f^\dagger$};
\end{tz}
=~~
\begin{tz}[zx,slave,every to/.style={out=up, in=down},scale=-1,xscale=1]
\draw (0,1.5) to (0,2) to [in=left] node[pos=1] (r){} (0.5,2.5) to [out=right, in=up] (1,2)  to [out=down, in=up] (1,0);
\draw (-1,3) to [out=down,in=up] (-1,1) to [out=down, in=left] node[pos=1] (l){} (-0.5,0.5) to [out=right, in=down] (0,1) to (0,1.5);
\draw (0.5,2.5) to[in=down] (0.5,3); 
\node[zxvertex=\zxwhite] at (0.5,3){};
\draw (-0.5,0) to[in=down] (-0.5,0.5);
\node[zxvertex=\zxwhite] at (-.5,0){};
\node[zxnode=\zxwhite] at (0,1.5) {$f$};
\node[zxvertex=\zxwhite] at (l.center){};
\node[zxvertex=\zxwhite] at (r.center){};
\end{tz}
\end{calign}
We call a unitary $*$-homomorphism a \emph{unitary $*$-isomorphism}. It is easy to check that a unitary $*$-isomorphism also obeys the following \emph{$*$-cohomomorphism} equations:
\begin{calign}
\label{eq:cohomo}
\begin{tz}[zx, master, every to/.style={out=up, in=down}]
\draw (0,0) to (0,2) to [out=135] (-0.75,3);
\draw (0,2) to [out=45] (0.75, 3);
\node[zxnode=\zxwhite] at (0,1) {$f$};
\node[zxvertex=\zxwhite, zxup] at (0,2) {};
\end{tz}
=
\begin{tz}[zx, every to/.style={out=up, in=down}]
\draw (0,0) to (0,0.75) to [out=135] (-0.75,1.75) to (-0.75,3);
\draw (0,0.75) to [out=45] (0.75, 1.75) to +(0,1.25);
\node[zxnode=\zxwhite] at (-0.75,2) {$f$};
\node[zxnode=\zxwhite] at (0.75,2) {$f$};
\node[zxvertex=\zxwhite, zxup] at (0,0.75) {};
\end{tz}
&
\begin{tz}[zx,slave, every to/.style={out=up, in=down}]
\draw (0,0) to (0,2) ;
\node[zxnode=\zxwhite] at (0,1) {$f$};
\node[zxvertex=\zxwhite, zxup] at (0,2) {};
\end{tz}
=
\begin{tz}[zx,slave, every to/.style={out=up, in=down}]
\draw (0,0) to (0,0.75) ;
\node[zxvertex=\zxwhite, zxup] at (0,0.75) {};
\end{tz}
&
\begin{tz}[zx,slave, every to/.style={out=up, in=down}]
\draw (0,0) to (0,3);
\node[zxnode=\zxwhite] at (0,1.5) {$f^\dagger$};
\end{tz}
=~~
\begin{tz}[zx,slave,every to/.style={out=up, in=down}]
\draw (0,1.5) to (0,2) to [in=left] node[pos=1] (r){} (0.5,2.5) to [out=right, in=up] (1,2)  to [out=down, in=up] (1,0);
\draw (-1,3) to [out=down,in=up] (-1,1) to [out=down, in=left] node[pos=1] (l){} (-0.5,0.5) to [out=right, in=down] (0,1) to (0,1.5);
\draw (0.5,2.5) to[in=down] (0.5,3); 
\node[zxvertex=\zxwhite] at (0.5,3){};
\draw (-0.5,0) to[in=down] (-0.5,0.5);
\node[zxvertex=\zxwhite] at (-.5,0){};
\node[zxnode=\zxwhite] at (0,1.5) {$f$};
\node[zxvertex=\zxwhite] at (l.center){};
\node[zxvertex=\zxwhite] at (r.center){};
\end{tz}
\end{calign}
\end{definition}
\begin{theorem}\label{thm:starisoclass}
Let $\mathcal{C}$ be a $\mathbb{C}$-linear pivotal dagger 2-category with split dagger idempotents. Let $s,t$ be simple objects, and let $X:r \to s$ and $Y: r \to t$ be 1-morphisms. Then $X$ and $Y$ are equivalent in $\mathcal{C}$ if and only if the special Frobenius monoids $X \otimes X^*$ and $Y \otimes Y^*$ in $\End(r)$ are unitarily $*$-isomorphic.
\end{theorem}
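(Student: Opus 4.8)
The statement is an ``if and only if''; I would prove the two implications separately. The forward direction is a direct construction; the reverse direction --- producing a dagger equivalence from a unitary $*$-isomorphism --- is where the real work lies.

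\emph{Forward direction.} Suppose $X$ and $Y$ are equivalent, witnessed by a dagger equivalence $E: t \to s$ and a unitary $2$-morphism $\tau: X \to Y \otimes E$. Promoting $E$ to an adjoint dagger equivalence, its evaluation $\epsilon_E: E \otimes E^* \to \id_t$ is unitary and belongs to a right-dual pair. Using the pivotal dagger structure to form the conjugate $\tau_*: X^* \to E^* \otimes Y^*$ of $\tau$ (itself unitary), I would set
$$
g := (\id_Y \otimes \epsilon_E \otimes \id_{Y^*}) \circ (\tau \otimes \tau_*) : X \otimes X^* \longrightarrow Y \otimes Y^*,
$$
which is unitary as a composite of unitaries. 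A short computation with the nested-duals formulas and the equivalence $\tau$ gives $\dim_L(X) = \dim_L(Y \otimes E) = d_Y\,\id_s$, using that $\dim_L(E) = \id_s$ for a dagger equivalence; hence $d_X = d_Y$ and $g$ respects the normalising scalars in~\eqref{eq:frobeniusmorita}. It then remains to verify diagrammatically that $g$ is a $*$-homomorphism: preservation of multiplication and unit follows from the snake equations for $E$ and unitarity of $\tau$ and $\epsilon_E$, and the $f^\dagger$-equation of~\eqref{eq:homo} follows from dagger pivotality. Being unitary, $g$ is then a unitary $*$-isomorphism.

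\emph{Reverse direction.} Suppose $g: X \otimes X^* \to Y \otimes Y^*$ is a unitary $*$-isomorphism. Morally $g$ exhibits a Morita equivalence of these special Frobenius monoids in $\End(r)$, and because each has the special form $Z \otimes Z^*$ this equivalence should collapse to a dagger equivalence between $s$ and $t$. Concretely, I would build a $2$-morphism $p: Y^* \otimes X \to Y^* \otimes X$ by creating a pair with the coevaluation $\eta_X: \id_s \to X^* \otimes X$ at the $s$-end of $Y^* \otimes X$, applying $g$ to the resulting $X \otimes X^*$ block, and contracting the emerging $Y^* \otimes Y$ block with the pivotal left-duality cap $Y^* \otimes Y \to \id_t$ (with a suitable normalising scalar). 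Multiplicativity of $g$ gives $p^2 = p$, while the $f^\dagger$-equation for $g$ together with dagger pivotality gives $p = p^\dagger$; so $p$ is a dagger idempotent, and splitting it by hypothesis produces a $1$-morphism $E: t \to s$ and an isometry $\iota: E \to Y^* \otimes X$ with $\iota \circ \iota^\dagger = p$ and $\iota^\dagger \circ \iota = \id_E$.

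It remains to (a) produce a unitary $\tau: X \to Y \otimes E$, and (b) show $E$ is a dagger equivalence. For (a), $\tau$ is assembled from $\iota$, the counit of $Y \otimes Y^*$, and $g$, with a scalar chosen so that $\tau^\dagger \circ \tau = \id_X$ and $\tau \circ \tau^\dagger = \id_{Y \otimes E}$; these follow from $\iota^\dagger \circ \iota = \id_E$, $\iota \circ \iota^\dagger = p$, unitarity of $g$, the special Frobenius relations, and simplicity of $s$. For (b), running the same splitting with $g^\dagger = g^{-1}$ and with the roles of $X$ and $Y$ (equivalently $s$ and $t$) exchanged yields $E': s \to t$, and one checks that the evident $2$-morphisms $E' \otimes E \to \id_s$ and $E \otimes E' \to \id_t$ built from $\iota$, $\iota'$ and the Frobenius structures are unitary and satisfy the snake equations. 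I expect (b) to be the main obstacle: it is precisely where the ``abstract'' isomorphisms $E \otimes E^* \cong \id_t$ and $E^* \otimes E \cong \id_s$ must be upgraded to an honest dagger equivalence, and doing this cleanly forces one to combine all of the hypotheses --- specialness of the Frobenius monoids, simplicity of both $s$ and $t$, and the full strength of the $*$-isomorphism axioms on $g$ --- in a single diagrammatic argument.
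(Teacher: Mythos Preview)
Your reverse direction follows the paper's line: build a dagger idempotent on $Y^* \otimes X$ from $g$, split it to get $E$ and an isometry, then extract $\tau$ and the equivalence. The paper handles your step (b) a little differently --- rather than splitting a second idempotent with $g^{-1}$ to produce a separate $E'$, it shows directly that the pivotal dual $E^*$, with cup and cap rescaled by $\sqrt{d_Y/d_X}$ and $\sqrt{d_X/d_Y}$, is already a weak inverse, reading the required identities off the $*$-homomorphism and $*$-cohomomorphism equations for $g$ --- but your outline is workable.

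The forward direction has a genuine gap. You assert $\dim_L(E) = \id_s$ for a dagger equivalence and conclude $d_X = d_Y$; this is false in a general pivotal dagger $2$-category. The weak inverse $E^{-1}$ from the adjoint equivalence and the chosen pivotal dual $E^*$ are related by the canonical isomorphism $u: E^* \to E^{-1}$ of Proposition~\ref{prop:relateduals}, and $u$ need not be unitary: the paper computes $u^\dagger = d_E^{-1}\,u^{-1}$ and $\dim_L(E) = d_E\,\id_s$ with $d_E = d_X/d_Y$. This bites your construction of $g$ in two places. First, there is no unitary $\epsilon_E : E \otimes E^* \to \id_t$ to invoke: the unitary cap from the adjoint equivalence lives on $E \otimes E^{-1}$, whereas your conjugate $\tau_*$ lands in $E^* \otimes Y^*$, so to compose you must insert $u$ and lose unitarity (equivalently, use the pivotal cap, which is not unitary). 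Second, the units of the two Frobenius monoids carry the normalisations $\sqrt{d_X}$ and $\sqrt{d_Y}$ from~\eqref{eq:frobeniusmorita}, so an unnormalised $g$ cannot preserve them when $d_X \neq d_Y$. The fix is the paper's: take the $*$-isomorphism to be $\sqrt{d_X/d_Y}$ times the evident diagram, and verify the $*$-homomorphism axioms using the identity~\eqref{eq:capcup}, which records precisely the failure of the pivotal cap of $E$ to be unitary.
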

\begin{proof}
Suppose that $X$ and $Y$ are equivalent by some dagger equivalence $[E,E^{-1},\eta,\epsilon]$ and unitary 2-morphism $\tau: X \to Y \otimes E$. WLOG we may take $[E^{-1},\eta,\epsilon]$ to be a right dual for $E$. We will show that $X \otimes X^*$ and $Y \otimes Y^*$ are unitarily $*$-isomorphic. 

We first consider the relationship between the right dual $[E^{-1},\eta,\epsilon]$ and the chosen right dual for $E$ in the pivotal dagger 2-category $\mathcal{C}$. Let $u: E^* \to E^{-1}$ be the isomorphism relating the right duals $E^*$ and $E^{-1}$ by Lemma~\ref{lem:relateduals}:
\begin{calign}
\includegraphics[scale=.8,valign=c]{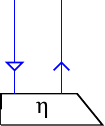}
~~=~~
\includegraphics[scale=.8,valign=c]{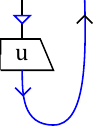}
&&
\includegraphics[scale=.8,valign=c]{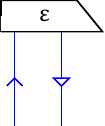}
~~=~~
\includegraphics[scale=.8,valign=c]{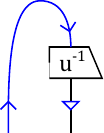}
\end{calign}
(Here and throughout we draw the equivalence $E$ and its duals with a blue wire, and the $E^{-1}$ wire with a triangular arrow.) 
Let $d_E$ be the scalar such that $\dim_R(E) = \frac{1}{d_E} \id_t$.
We first observe that 
\begin{equation}
u^{\dagger} = \frac{1}{d_E} u^{-1},
\end{equation}
which can be seen by the following equalities:
\begin{calign}\nonumber
\includegraphics[scale=.8,valign=c]{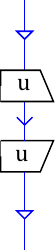}
~~=~~
\includegraphics[scale=.8,valign=c]{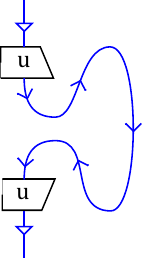}
~~=~~
\includegraphics[scale=.8,valign=c]{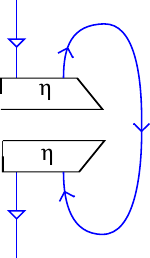}
~~=~~
\includegraphics[scale=.8,valign=c]{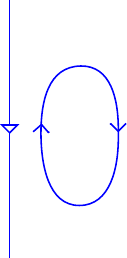}
\end{calign}
We can therefore make the following further observation:
\begin{calign}\label{eq:capcup}
\includegraphics[scale=.8,valign=c]{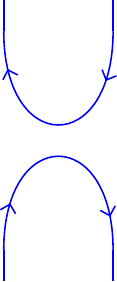}
~~=~~
\includegraphics[scale=.8,valign=c]{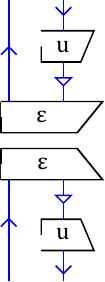}
~~=~~
\frac{1}{d_E}~
\includegraphics[scale=.8,valign=c]{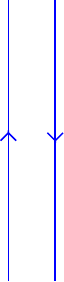}
\end{calign}
We also note that 
\begin{equation}\dim_L(E) = d_E \id_s,
\end{equation}
which is seen as follows:
\begin{calign}\nonumber
\id_{s} 
~~=~~ 
\includegraphics[scale=.8,valign=c]{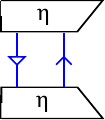}
~~=~~
\includegraphics[scale=.8,valign=c]{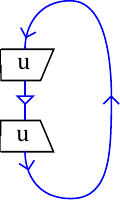}
~~=~~
\frac{1}{d_E}~
\includegraphics[scale=.8,valign=c]{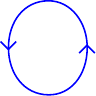}
~~=~~
\frac{1}{d_E}\dim_L(E)
\end{calign}
Finally we consider the relationship between $d_E$ and $d_X,d_Y$:
\begin{calign}\nonumber
d_X \id_s
~~=~~
\includegraphics[scale=1,valign=c]{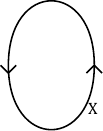}
~~=~~
\includegraphics[scale=1,valign=c]{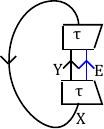}
~~=~~
\includegraphics[scale=1,valign=c]{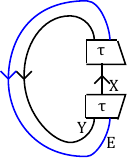}
~~=~~
\includegraphics[scale=1,valign=c]{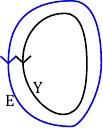}
~~=~~d_Y d_E \id_s
\end{calign}
Here the second and fourth equalities are by unitarity of $\tau$, and the third equality is by pulling $\tau$ around the cup and cap of the duality.
It follows that:
\begin{equation}\label{eq:dedef}
d_E = \frac{d_X}{d_Y}
\end{equation}
Now we can define our unitary $*$-isomorphism $X \otimes X^* \to Y \otimes Y^*$. Consider the following 2-morphism:
\begin{calign}
\sqrt{\frac{d_X}{d_Y}}~~
\includegraphics[scale=1,valign=c]{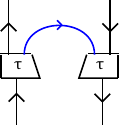}
\end{calign}
We show that this 2-morphism is a unitary $*$-isomorphism. For unitarity:
\begin{calign}
\frac{d_X}{d_Y}~~
\includegraphics[scale=1,valign=c]{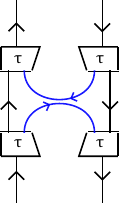}
~~=~~
\includegraphics[scale=1,valign=c]{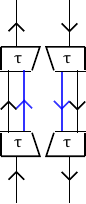}
~~=~~
\includegraphics[scale=1,valign=c]{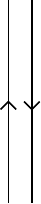}
\end{calign}
Here the first equality is by~\eqref{eq:capcup} and the second is by unitarity of $\tau$.
\begin{calign}
\frac{d_X}{d_Y}~~
\includegraphics[scale=1,valign=c]{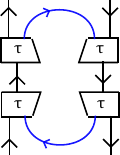}
~~=~~
\frac{d_X}{d_Y}~~
\includegraphics[scale=1,valign=c]{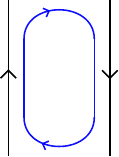}
~~=~~
\includegraphics[scale=1,valign=c]{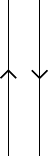}
\end{calign}
Here the first equality is by unitarity of $\tau$ and the second is by~\eqref{eq:dedef}.

For the first $*$-homomorphism condition:
\begin{calign}\nonumber
\frac{d_X}{d_Y} \cdot \frac{1}{\sqrt{d_Y}}~~
\includegraphics[scale=1,valign=c]{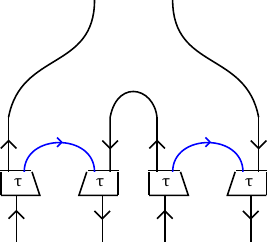}
~~=~~
\frac{d_X}{d_Y} \cdot \frac{1}{\sqrt{d_Y}}~~\includegraphics[scale=1,valign=c]{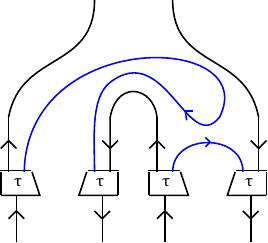}
\\\nonumber
=~~
\frac{1}{\sqrt{d_Y}}
\includegraphics[scale=1,valign=c]{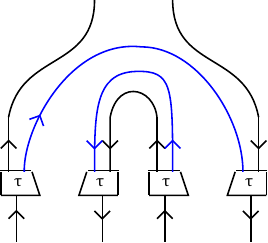}
~~=~~
\frac{1}{\sqrt{d_Y}}
\includegraphics[scale=1,valign=c]{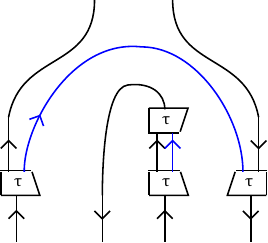}
\\=~~
\sqrt{\frac{d_X}{d_Y}} \cdot \frac{1}{\sqrt{d_X}}~~
\includegraphics[scale=1,valign=c]{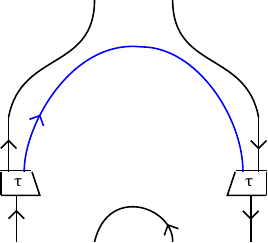}
\end{calign}
Here the second equality is by~\eqref{eq:capcup} and the fourth equality is by unitarity of $\tau$.

For the second $*$-homomorphism condition:
\begin{calign}
\sqrt{\frac{d_X}{d_Y}} \cdot \sqrt{d_X}~~
\includegraphics[scale=1,valign=c]{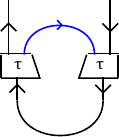}
~~=~~
\frac{d_X}{\sqrt{d_Y}}
\includegraphics[scale=1,valign=c]{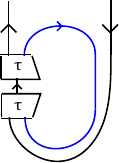}
~~=~~
\frac{d_X}{\sqrt{d_Y}}
\includegraphics[scale=1,valign=c]{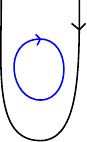}
~~=~~
\sqrt{d_Y}
\includegraphics[scale=1,valign=c]{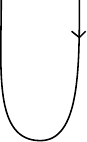}
\end{calign}
Here the second equality is by unitarity of $\tau$ and the third equality is by definition of $d_E$.

The third $*$-homomorphism condition is implied by unitarity and the first two $*$-homomorphism conditions.

One direction is therefore proved. For the other direction, let $f: X \otimes X^* \to Y \otimes Y^*$ be a unitary $*$-isomorphism. We will now construct a dagger equivalence $E: t \to s$ and a unitary 2-morphism $\tau: X \to Y \otimes E$.

We first observe that the following modification $\tilde{f}: Y^* \otimes X \to Y^* \otimes X$ is a dagger idempotent:
\begin{calign}
\frac{1}{\sqrt{d_Xd_Y}}
\includegraphics[scale=1,valign=c]{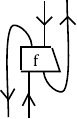} 
\end{calign}
Indeed, we have the following equations for dagger idempotency. For idempotency:
\begin{calign}
\frac{1}{d_Xd_Y}
\includegraphics[scale=1,valign=c]{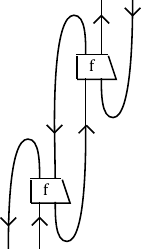}
~~=~~
\frac{1}{\sqrt{d_Y}(d_X)^{3/2}}
\includegraphics[scale=1,valign=c]{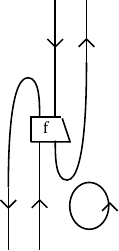}
~~=~~
\frac{1}{\sqrt{d_Xd_Y}}
\includegraphics[scale=1,valign=c]{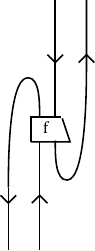}
\end{calign}
Here the first equality is by the first $*$-homomorphism condition~\eqref{eq:homo}. To see that the idempotent is dagger:
\begin{calign}
\frac{1}{\sqrt{d_Xd_Y}}
\includegraphics[scale=1,valign=c]{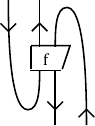}
~~=~~
\frac{1}{\sqrt{d_Xd_Y}}
\includegraphics[scale=1,valign=c]{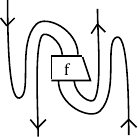}
~~=~~
\frac{1}{\sqrt{d_Xd_Y}}
\includegraphics[scale=1,valign=c]{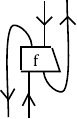}
\end{calign}
Here the first equality is by the third $*$-cohomomorphism condition~\eqref{eq:cohomo}.

Since dagger idempotents split, we obtain a new 1-morphism $E: t \to s$ and an isometry $\tilde{\tau}: E \to Y^* \otimes X$ satisfying $\tilde{\tau} \circ \tilde{\tau}^{\dagger} = \tilde{f}$, i.e.:
\begin{calign}
\includegraphics[scale=1,valign=c]{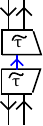}
~~=~~
\frac{1}{\sqrt{d_Xd_Y}}
\includegraphics[scale=1,valign=c]{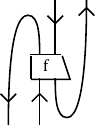}
\end{calign}
We will first show that $E$ is a dagger equivalence. Indeed, we observe that 
\begin{equation}\label{eq:dimre}
\dim_R(E) = \frac{d_Y}{d_X} \id_t
\end{equation}
by the following equalities:
\begin{calign}
\includegraphics[scale=1,valign=c]{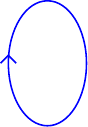}
~~=~~
\includegraphics[scale=1,valign=c]{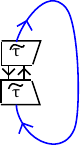}
~~=~~
\includegraphics[scale=1,valign=c]{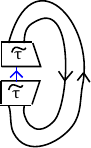}
~~=~~
\frac{1}{\sqrt{d_X d_Y}}
\includegraphics[scale=1,valign=c]{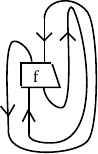}
~~=~~
\frac{1}{d_X}
\includegraphics[scale=1,valign=c]{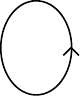}
\end{calign}
Here the first equality is by the fact that $\tilde{\tau}$ is an isometry, the second equality is by sliding $\tilde{\tau}$ around the cup and cap, the third equality is by $\tilde{\tau} \circ \tilde{\tau}^{\dagger} = \tilde{f}$, and the fourth equality is by the second $*$-homomorphism condition~\eqref{eq:homo}.
Likewise, we can show 
\begin{equation}\label{eq:dimle}
\dim_L(E) = \frac{d_X}{d_Y} \id_s;
\end{equation}
for this we use the same technique with the second $*$-cohomomorphism condition~\eqref{eq:cohomo}. 

We therefore propose that $E^*$ is a weak inverse for $E$, with the following 2-morphisms witnessing the equivalence:
\begin{calign}\label{eq:equivwitnesses}
\sqrt{\frac{d_Y}{d_X}}~~
\includegraphics[scale=1,valign=c]{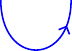}
&&
\sqrt{\frac{d_X}{d_Y}}~~
\includegraphics[scale=1,valign=c]{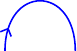}
\end{calign}
The equations~\eqref{eq:dimle} and~\eqref{eq:dimre} show that the 2-morphisms~\eqref{eq:equivwitnesses} are an isometry and a coisometry respectively. For unitarity we must show that they are also a coisometry and an isometry respectively.

For this we first observe the following decomposition of the unitary $*$-isomorphism $f$ in terms of the isometry $\tilde{\tau}$, which follows straightforwardly from the definition of $\tilde{f}$ and $\tilde{\tau}$:
\begin{calign}\label{eq:ffromtau}
\includegraphics[scale=1,valign=c]{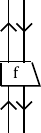}
~~=~~
\includegraphics[scale=1,valign=c]{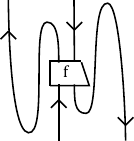}
~~=~~
\sqrt{d_X d_Y}
\includegraphics[scale=1,valign=c]{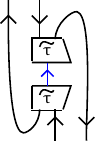}
~~=~~
\sqrt{d_X d_Y}
\includegraphics[scale=1,valign=c]{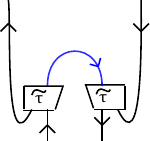}
\end{calign}
It will also be useful to note the following expression of $f^{\dagger}$ in terms of $\tilde{\tau}$ for later:
\begin{calign}\label{eq:fdaggerfromtau}
\includegraphics[scale=1,valign=c]{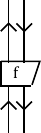}
~~=~~
\includegraphics[scale=1,valign=c]{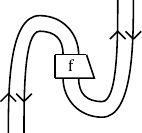}
~~=~~
\sqrt{d_X d_Y}
\includegraphics[scale=1,valign=c]{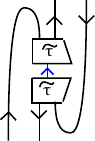}
~~=~~
\sqrt{d_X d_Y}
\includegraphics[scale=1,valign=c]{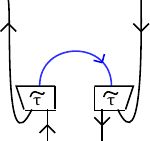}
\end{calign}
Here the second equality was by the third $*$-cohomomorphism equation~\eqref{eq:cohomo}.

Using~\eqref{eq:ffromtau}, we now consider what the first $*$-homomorphism~\eqref{eq:homo} and $*$-cohomomorphism~\eqref{eq:cohomo} equations tell us about $\tilde{\tau}$. We begin with the first $*$-homomorphism equation:
\begin{calign}\nonumber
d_X \sqrt{d_Y}
\includegraphics[scale=1,valign=c]{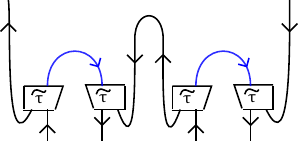}
~~=~~
\sqrt{d_Y}
\includegraphics[scale=1,valign=c]{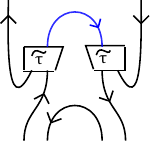}
\\\nonumber
\Rightarrow~~
d_X~
\includegraphics[scale=1,valign=c]{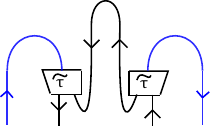}
~~=~~
\includegraphics[scale=1,valign=c]{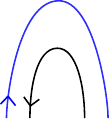}
\\\label{eq:taustarhom1}
\Leftrightarrow~~
d_X
\includegraphics[scale=1,valign=c]{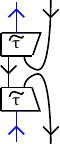}
~~=~~
\includegraphics[scale=1,valign=c]{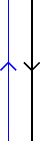}
\qquad \qquad \qquad
\Leftrightarrow~~
d_X~
\includegraphics[scale=1,valign=c]{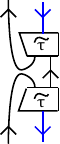}
~~=~~
\includegraphics[scale=1,valign=c]{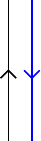}
\end{calign}
Here for the first implication we bent the top left and top right legs down and precomposed with $\tilde{\tau}$ on the left and $\tilde{\tau}^*$ on the right, using the fact that $\tilde{\tau}$ is an isometry. For the second implication we bent the two rightmost legs upwards. For the third implication we took the transpose.

We now consider the first $*$-cohomomorphism equation (the derivation of these implications is precisely as before):
\begin{calign}\nonumber
\sqrt{d_X} d_Y
\includegraphics[scale=1,valign=c]{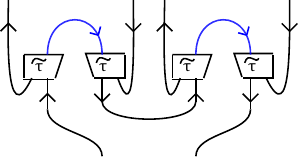}
~~=~~
\sqrt{d_X}
\includegraphics[scale=1,valign=c]{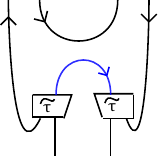}
\\\nonumber
\Rightarrow~~
d_Y~
\includegraphics[scale=1,valign=c]{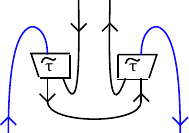}
~~=~~
\includegraphics[scale=1,valign=c]{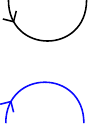}
\\ \label{eq:taustarcohom1}
\Leftrightarrow~~
d_Y~
\includegraphics[scale=1,valign=c]{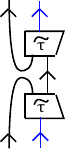}
~~=~~
\includegraphics[scale=1,valign=c]{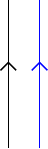}
\qquad \qquad \qquad
\Leftrightarrow~~
d_Y~
\includegraphics[scale=1,valign=c]{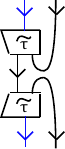}
~~=~~
\includegraphics[scale=1,valign=c]{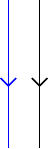}
\end{calign}
These equations are all we need to show that the 2-morphisms~\eqref{eq:equivwitnesses} are unitary. Indeed, we show that the first is a coisometry:
\begin{calign}\nonumber
\frac{d_Y}{d_X}~~
\includegraphics[scale=1,valign=c]{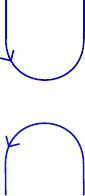}
~~=~~
\frac{d_Y}{(d_X)^3}~
\includegraphics[scale=1,valign=c]{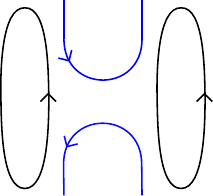}
\\\nonumber
=~~
d_X d_Y
\includegraphics[scale=1,valign=c]{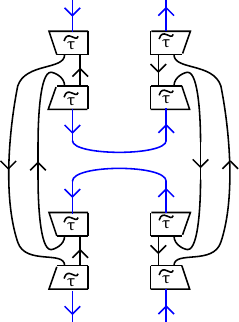}
~~=~~
\includegraphics[scale=1,valign=c]{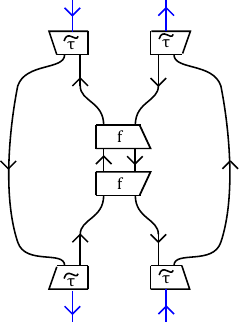}
\\
=~~
\includegraphics[scale=1,valign=c]{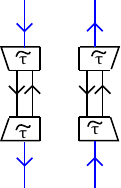}
~~=~~
\includegraphics[scale=1,valign=c]{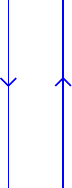}
\end{calign}
Here the second equality is by~\eqref{eq:taustarhom1}; the third equality is by~\eqref{eq:fdaggerfromtau}; the fourth equality is by unitarity of $f$; and the last equality follows since $\tilde{\tau}$ is an isometry.

We similarly show that the second 2-morphism of~\eqref{eq:equivwitnesses} is an isometry:
\begin{calign}\nonumber
\frac{d_X}{d_Y}~~
\includegraphics[scale=1,valign=c]{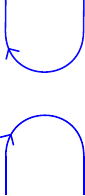}
~~=~~
\frac{d_X}{(d_Y)^3}~
\includegraphics[scale=1,valign=c]{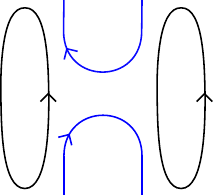}
\\\nonumber
=~~
d_X d_Y
\includegraphics[scale=1,valign=c]{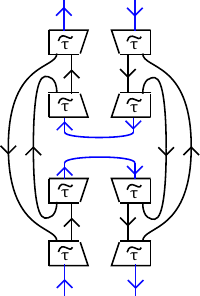}
~~=~~
\includegraphics[scale=1,valign=c]{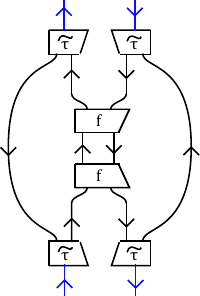}
\\
=~~
\includegraphics[scale=1,valign=c]{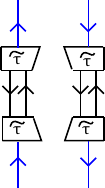}
~~=~~
\includegraphics[scale=1,valign=c]{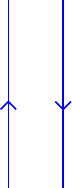}
\end{calign}
Here the second equality is by~\eqref{eq:taustarcohom1}; the third equality is by~\eqref{eq:ffromtau}; the fourth equality is by unitarity of $f$; and the last equality follows since $\tilde{\tau}$ is an isometry. 

We have therefore shown that $E$ is a dagger equivalence. Lastly, we need to define a unitary 2-morphism $\tau: X \to Y \otimes E$. We define $\tau$ to be the following 2-morphism:
\begin{calign}
\sqrt{d_Y}
\includegraphics[scale=1,valign=c]{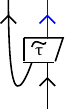}
\end{calign}
We need to show that $\tau$ is unitary. We already saw that it is a coisometry~\eqref{eq:taustarcohom1}. To show that it is an isometry we consider the second $*$-cohomomorphism equation~\eqref{eq:cohomo}:
\begin{calign}\nonumber
\sqrt{d_X}d_Y~~
\includegraphics[scale=1,valign=c]{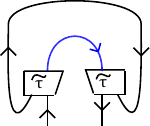}
~~=~~
\sqrt{d_X}~~
\includegraphics[scale=1,valign=c]{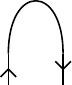}
\\
\Leftrightarrow~~
d_Y~~
\includegraphics[scale=1,valign=c]{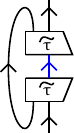}
~~=~~
\includegraphics[scale=1,valign=c]{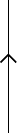}
\end{calign}
Here the implication is by bending the bottom right leg upwards.
The 2-morphism $\tau$ is therefore unitary and the result follows.  
\end{proof}
\noindent
We are almost ready to classify UPTs from the canonical fibre functor $F$. To classify UPTs in terms of special Frobenius monoids, we need some intrinsic characterisation of those special Frobenius monoids in $\End(F)$ which are \emph{split}: that is, which arise as $\alpha \otimes \alpha^*$ for some UPT $\alpha$ whose source is $F$. In~\cite[Def. 4.10]{Verdon2020} (c.f.~\cite[Def. 3.2]{Musto2019}) we define the notion of a \emph{simple Frobenius monoid} in $\End(F)$. For any simple Frobenius monoid $A$, we construct a fibre functor $F'$ and a UPT $\alpha: F \to F'$ such that $A \cong \alpha \otimes \alpha^*$. In the other direction, every special Frobenius algebra $\alpha \otimes \alpha^*$ is a simple Frobenius monoid.

By Theorem~\ref{thm:starisoclass} we therefore obtain the following classification. 
\begin{corollary}
Let $G$ be a compact quantum group and let $F: \Rep(G) \to \Hilb$ be the canonical fibre functor. There is a bijective correspondence between:
\begin{itemize}
\item Unitary $*$-isomorphism classes of simple Frobenius monoids in $\End(F) \cong \Rep(A_G)$.
\item Equivalence classes of UPTs whose source is the canonical fibre functor $F$.
\end{itemize}
\end{corollary}

\subsection{Classification of fibre functors}
We have classified equivalence classes of UPTs from the canonical fibre functor. We now classify the fibre functors $F'$ accessible from the canonical fibre functor $F$, i.e such that there exists a UPT $\alpha: F \to F'$.

We first observe another perspective on the special Frobenius monoid~\eqref{eq:frobeniusmorita}. 
\begin{definition}
Let $X: r \to s$ be a 1-morphism in a dagger 2-category. We say that $X$ is \emph{special} if it has a right dual $[X^*,\eta_s,\epsilon_s]$ satisfying the following equation:
\begin{calign}
\includegraphics[scale=1,valign=c]{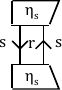}
~~=~~
\includegraphics[scale=1,valign=c]{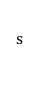}
\end{calign} 
\end{definition}
\begin{lemma}\label{lem:1morphspecial}
In a $\mathbb{C}$-linear pivotal dagger 2-category, all 1-morphisms into a simple object are special. 
\end{lemma}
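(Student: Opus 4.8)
The plan is to start from the chosen right dual of $X$ coming from the pivotal dagger structure and rescale its unit and counit by reciprocal scalars, so as to normalise the associated ``bubble'' at $\id_s$ to the identity; simplicity of $s$ is exactly what makes that bubble a scalar to begin with. Write $[X^*,\eta,\epsilon]$ for the chosen right dual, so $\eta\colon\id_s\to X^*\otimes X$ and $\epsilon\colon X\otimes X^*\to\id_r$, and consider the $2$-endomorphism $\eta^\dagger\circ\eta$ of $\id_s$. In a pivotal dagger $2$-category $\eta^\dagger$ is the left cap of $X$, so $\eta^\dagger\circ\eta=\dim_L(X)$ in the sense of Definition~\ref{def:trace}; this is precisely the composite that the defining equation of a special $1$-morphism asks to equal $\id_{\id_s}$ (equivalently, it is the middle bubble whose triviality makes $X\otimes X^*$ a \emph{special} Frobenius monoid, cf.~\eqref{eq:frobeniusmorita}).

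Since $s$ is simple, $\Hom(\id_s,\id_s)\cong\mathbb{C}$, hence $\eta^\dagger\circ\eta=d\,\id_{\id_s}$ for a scalar $d$, and I claim $d>0$. It is self-adjoint, hence real, because the dagger restricted to $\End(\id_s)\cong\mathbb{C}$ is complex conjugation. It is nonzero: if $\eta^\dagger\circ\eta=0$ then $\eta=0$ by the non-degeneracy hypothesis on $\dagger$, and then a snake equation~\eqref{eq:rightsnakes} forces $\id_X=0$ (the lemma being trivial in the degenerate case where $X$ is a zero $1$-morphism, since then $X\otimes X^*$ is zero). And it is positive, since $\eta^\dagger\circ\eta$ is a positive $2$-endomorphism of the simple $1$-morphism $\id_s$ in the $\mathbb{C}$-linear dagger sense --- this is the same positivity of $d_X$ already used when defining the normalised Morita monoid in~\eqref{eq:frobeniusmorita}. (Alternatively, one could simply decree the pivotal dual of $X$ normalised so that $\dim_L(X)$ is a positive multiple of $\id_s$, which is the standard convention for $1$-morphisms into simple objects.)

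With $d>0$ in hand I would set $\tilde\eta:=d^{-1/2}\eta$ and $\tilde\epsilon:=d^{1/2}\epsilon$ and verify that $[X^*,\tilde\eta,\tilde\epsilon]$ is still a right dual for $X$: each side of the snake equations~\eqref{eq:rightsnakes} is invariant under rescaling $\eta$ by a scalar $c$ and $\epsilon$ by $c^{-1}$, so the snakes for $\tilde\eta,\tilde\epsilon$ follow from those for $\eta,\epsilon$. Finally, using that $d$ is real, $\tilde\eta^\dagger\circ\tilde\eta=\overline{d^{-1/2}}\,d^{-1/2}\,(\eta^\dagger\circ\eta)=d^{-1}\cdot d\,\id_{\id_s}=\id_{\id_s}$, so $[X^*,\tilde\eta,\tilde\epsilon]$ exhibits $X$ as special. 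The only point that is not pure bookkeeping is the positivity $d>0$; the rest is just the observation that reciprocal rescaling of a dual preserves the snake equations while multiplying the bubble by the square of the scale.
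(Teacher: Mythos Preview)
Your proposal is correct and takes essentially the same approach as the paper: rescale the chosen cup and cap by $d^{-1/2}$ and $d^{1/2}$ respectively, where $d$ is the scalar $\dim_L(X)\in\End(\id_s)\cong\mathbb{C}$. You are in fact more careful than the paper, which simply asserts $d_X$ is nonzero and writes $\sqrt{d_X}$ without discussing positivity; your observation that the dagger nondegeneracy and simplicity force $d>0$ (so that $\overline{d^{-1/2}}\,d^{-1/2}\,d=1$) fills a small gap the paper leaves implicit.
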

\begin{proof}
Let $X: r \to s$ be a 1-morphism into a simple object, and let $[\alpha^*,\eta,\epsilon]$ be its chosen right dual. Let $d_X$ be the nonzero scalar such that $\dim_L(X) = d_X \id_s$. Now we normalise the cup and cap 2-morphisms:
\begin{align*}
\tilde{\eta} := \frac{1}{\sqrt{d_X}}\eta && \tilde{\epsilon} := \sqrt{d_X}\epsilon 
\end{align*}
Clearly the snake equations will still be obeyed.
\end{proof}
\noindent
A 1-morphism $X: r \to s$ in a dagger 2-category with a special right dual $[X^*,\tilde{\eta},\tilde{\epsilon}]$ induces a special Frobenius monoid on the object $X \otimes X^*$ in $\End(r)$, with multiplication and unit defined as follows:
\begin{calign}\label{eq:specialdualfrobmon}
\includegraphics[scale=.7,valign=c]{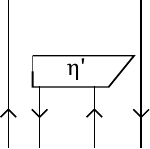}
&&
\includegraphics[scale=.7,valign=c]{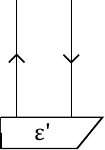}
\end{calign}
We observe that, in a pivotal dagger $2$-category, when the special dual of a 1-morphism is defined as in Lemma~\ref{lem:1morphspecial} then~\eqref{eq:specialdualfrobmon} is precisely the the special Frobenius monoid of~\eqref{eq:frobeniusmorita}.

For our classification we use the notion of \emph{Morita equivalence} of special Frobenius monoids.
\def\d{0.5}
\def\h{2.25}
\def\inang{-45}
\begin{definition} \label{def:daggerbimodule}Let $A$ and $B$ be special Frobenius monoids in a monoidal dagger category. An $A{-}B$-\textit{dagger bimodule} is an object $M$ together with an morphism $\rho:A\otimes M\otimes B \to M$ fulfilling the following equations:
\begin{calign}\label{eq:bimodule}
\begin{tz}[zx,every to/.style={out=up, in=down}]
\draw (0,0) to (0,3);
\draw (-\d-1.5,0) to [in=-135] (-\d-0.75,1) to[in=180-\inang] (0,\h);
\draw (-\d,0) to [in=-45] (-\d-0.75,1) ;
\draw (\d,0) to [in=-135] (\d+0.75,1) to [in=\inang] (0,\h);
\draw (\d+1.5,0) to [in=-45] (\d+0.75,1) ;
\node[zxvertex=\zxwhite, zxdown] at (-\d-0.75,1){};
\node[zxvertex=\zxwhite, zxdown] at (\d+0.75,1){};
\node[box,zxdown] at (0,\h) {$\rho$};
\end{tz}
~~=~~
\def\htop{2.25}
\def\hbot{1.25}
\begin{tz}[zx,every to/.style={out=up, in=down}]
\draw (0,0) to (0,3);
\draw (-\d-1.5,0) to [in=-135] (0,\htop);
\draw (-\d,0) to [in=-135] (0,\hbot);
\draw (\d,0) to [in=-45] (0,\hbot);
\draw (\d+1.5,0) to [in=-45] (0,\htop);
\node[box,zxdown] at (0,\hbot) {$\rho$};
\node[box,zxdown] at (0,\htop) {$\rho$};
\end{tz}
&
\begin{tz}[zx, every to/.style={out=up, in=down}]
\draw (0,0) to (0,3);
\draw (-\d,1.2) to [in=-135] (0,\h);
\draw (\d,1.2) to [in=-45] (0,\h);
\node[box,zxdown] at (0,\h) {$\rho$};
\node[zxvertex=\zxwhite] at (-\d,1.2){};
\node[zxvertex=\zxwhite] at (\d,1.2){};
\end{tz}
~~=~~~
\begin{tz}[zx, every to/.style={out=up, in=down}]
\draw (0,0) to (0,3);
\end{tz}
&
\def\x{0.2}
\begin{tz}[zx, every to/.style={out=up, in=down},xscale=0.8]
\draw (0,0) to (0,3);
\draw (-\x,1.5) to [out=up, in=right] (-0.75-\x, 2.25) to [out=left, in=up] (-1.5-\x, 1.5) to (-1.5-\x,0);
\draw (\x,1.5) to [out=up, in=left] (0.75+\x, 2.25) to [out=right, in=up] (1.5+\x, 1.5) to (1.5+\x,0);
\node[zxvertex=\zxwhite] at (-0.75-\x, 2.25){};
\node[zxvertex=\zxwhite] at (0.75+\x, 2.25){};
\node[box] at (0,1.5) {$\rho^\dagger$};
\end{tz}
~~=~~
\begin{tz}[zx, every to/.style={out=up, in=down},xscale=0.8]
\draw (0,0) to (0,3);
\draw (-1.25, 0) to [in=-135] (0,1.95) ;
\draw (1.25,0) to [in=-45] (0,1.95);
\node[box] at (0,1.95) {$\rho$};
\end{tz}
\end{calign}
\end{definition}
\noindent
We usually denote an $A{-}B$-dagger bimodule $M$ by $_AM_B$.
\ignore{
For a dagger bimodule ${}_AM_B$, we introduce the following shorthand notation:
\begin{calign}\label{eq:commute}
\begin{tz}[zx, every to/.style={out=up, in=down}]
\draw (0,0) to (0,3);
\draw (1,0) to [in=-45] (0,\h);
\node[boxvertex,zxdown] at (0,\h) {};
\end{tz}
~:=~\begin{tz}[zx, every to/.style={out=up, in=down}]
\draw (0,0) to (0,3);
\draw (-\d,1.2) to [in=-135] (0,\h);
\draw (1,0) to [in=-45] (0,\h);
\node[box,zxdown] at (0,\h) {$\rho$};
\node[zxvertex=\zxwhite] at (-\d,1.2){};
\end{tz}
&
\begin{tz}[zx, every to/.style={out=up, in=down},xscale=-1]
\draw (0,0) to (0,3);
\draw (1,0) to [in=-45] (0,\h);
\node[boxvertex,zxdown] at (0,\h) {};
\end{tz}
~:=~\begin{tz}[zx, every to/.style={out=up, in=down},xscale=-1]
\draw (0,0) to (0,3);
\draw (-\d,1.2) to [in=-135] (0,\h);
\draw (1,0) to [in=-45] (0,\h);
\node[box,zxdown] at (0,\h) {$\rho$};
\node[zxvertex=\zxwhite] at (-\d,1.2){};
\end{tz}
&
\begin{tz}[zx, every to/.style={out=up, in=down}]
\draw (0,0) to (0,3);
\draw (1,0) to [in=-45] (0,\h);
\draw (-1,0) to [in=-135] (0,\h);
\node[boxvertex,zxdown] at (0,\h) {};
\end{tz}
~:=~\begin{tz}[zx, every to/.style={out=up, in=down}]
\draw (0,0) to (0,3);
\draw (-1,0) to [in=-135] (0,\h);
\draw (1,0) to [in=-45] (0,\h);
\node[box,zxdown] at (0,\h) {$\rho$};
\end{tz}
~=~
\begin{tz}[zx, every to/.style={out=up, in=down}]
\draw (0,0) to (0,3);
\draw (1,0) to [in=-45] (0,\h);
\draw (-1,0) to [in=-135] (0, 1.5);
\node[boxvertex,zxdown] at (0,1.5){};
\node[boxvertex,zxdown] at (0,\h) {};
\end{tz}
~=~
\begin{tz}[zx, every to/.style={out=up, in=down},xscale=-1]
\draw (0,0) to (0,3);
\draw (1,0) to [in=-45] (0,\h);
\draw (-1,0) to [in=-135] (0, 1.5);
\node[boxvertex,zxdown] at (0,1.5){};
\node[boxvertex,zxdown] at (0,\h) {};
\end{tz}
\end{calign}
\ignore{It is easy to see that by the `only-left' and `only-right' actions the $A-B$-bimodule structure induces left $A$-module and right $B$-module structures on $M$.}
\noindent
Every special dagger Frobenius monoid $A$ gives rise to a trivial dagger bimodule ${}_AA_A$:
\begin{calign}
\begin{tz}[zx, every to/.style={out=up, in=down}]
\draw (0,0) to (0,3);
\draw (-1,0) to [in=-135] (0,2.);
\draw (1,0) to [in=-45] (0,2.);
\node[boxvertex,zxdown] at (0,2.){};
\end{tz}
~~:= ~~
\begin{tz}[zx]
\coordinate(A) at (0.25,0);
\draw (1,1) to [out=up, in=-135] (1.75,2);
\draw (1.75,2) to [out=-45, in=up] (3.25,0);
\draw (1.75,2) to (1.75,3);
\mult{A}{1.5}{1}
\node[zxvertex=\zxwhite,zxdown] at (1.75,2){};
\end{tz}
~~= ~~
\begin{tz}[zx,xscale=-1]
\coordinate(A) at (0.25,0);
\draw (1,1) to [out=up, in=-135] (1.75,2);
\draw (1.75,2) to [out=-45, in=up] (3.25,0);
\draw (1.75,2) to (1.75,3);
\mult{A}{1.5}{1}
\node[zxvertex=\zxwhite,zxdown] at (1.75,2){};
\end{tz}\end{calign}%
}
\begin{definition} A \textit{morphism of dagger bimodules} $_AM_B\to {}_AN_B$ is a morphism $f:M\to N$ that commutes with the action of the Frobenius monoids:
\begin{calign}
\begin{tz}[zx]
\draw (0,0) to (0,3);
\draw (-1,0) to [out=up, in=-135] (0,2.15);
\draw (1,0) to [out=up, in=-45] (0,2.15) ;
\node[zxnode=\zxwhite] at (0,0.85) {$f$};
\node[boxvertex,zxdown] at (0,2.15){};
\end{tz}
=
\begin{tz}[zx]
\draw (0,0) to (0,3);
\draw (-1,0) to [out=up, in=-135] (0,0.85);
\draw (1,0) to [out=up, in=-45] (0,0.85) ;
\node[zxnode=\zxwhite] at (0,2.15) {$f$};
\node[boxvertex,zxdown] at (0,0.85){};
\end{tz}
\end{calign}
Two dagger bimodules are \textit{isomorphic}, here written ${}_AM_B\cong{}_AN_B$, if there is a unitary morphism of dagger bimodules ${}_AM_B\to{}_AN_B$.
\end{definition}
\noindent
In a monoidal dagger category in which dagger idempotents split, we can compose dagger bimodules ${}_AM_B$ and ${}_BN_C$ to obtain an $A{-}C$-dagger bimodule ${}_AM{\otimes_B}N_C$ as follows. First note that the following endomorphism is a dagger idempotent:
\begin{calign}\label{eq:idempotentforrelprod}
\begin{tz}[zx,every to/.style={out=up, in=down}]
\draw (0,0) to (0,3);
\draw (2,0) to (2,3);
\draw (0,2.25) to [out=-45, in=left] (1, 1.2) to[out=right, in=-135] (2,2.25);
\draw (-1,1.2) to[out=90,in=-135] (0,2.25);
\draw (3,1.2) to[out=90,in=-45] (2,2.25);
\node[zxvertex=\zxwhite] at (1,1.2){};
\node[zxvertex=\zxwhite] at (-1,1.2){};
\node[zxvertex=\zxwhite] at (3,1.2){};
\node[boxvertex,zxdown] at (0,2.25){};
\node[boxvertex,zxdown] at (2,2.25){};
\node[dimension, left] at (0,0) {$M$};
\node[dimension, right] at (2,0) {$N$};
\end{tz}
\end{calign}
The \emph{relative tensor product} ${}_AM{\otimes_B}N_C$ is defined as the image of the splitting of this idempotent. We depict the isometry $i: M\otimes_B N\to M\otimes N$ as a downwards pointing triangle:
\begin{calign}\label{eq:moritaidempotentsplit}
\begin{tz}[zx,every to/.style={out=up, in=down}]
\draw (0,0) to (0,3);
\draw (2,0) to (2,3);
\draw (0,2.25) to [out=-45, in=left] (1, 1.2) to[out=right, in=-135] (2,2.25);
\draw (-1,1.2) to[out=90,in=-135] (0,2.25);
\draw (3,1.2) to[out=90,in=-45] (2,2.25);
\node[zxvertex=\zxwhite] at (1,1.2){};
\node[zxvertex=\zxwhite] at (-1,1.2) {};
\node[zxvertex=\zxwhite] at (3,1.2){};
\node[boxvertex,zxdown] at (0,2.25){};
\node[boxvertex,zxdown] at (2,2.25){};
\end{tz}
~~=~~
\begin{tz}[zx,every to/.style={out=up, in=down}]
\draw (0,0) to (0,0.5);
\draw (0,2.5) to (0,3);
\draw (2,0) to (2,0.5);
\draw (2,2.5) to (2,3);
\draw (1,0.5) to (1,2.5);
\node[dimension, right] at (1,1.5) {$M{\otimes_B}N$};
\node[triangleup=2] at (1,0.5){};
\node[triangledown=2] at (1,2.5){};
\end{tz}
&
\begin{tz}[zx]
\clip (-1.2, -0.3) rectangle (1.9,3.3);
\draw (0,0) to (0,1);
\draw (-1,1) to (-1,2);
\draw (1,1) to (1,2);
\draw (0, 2) to (0,3);
\node[triangleup=2] at (0,2){};
\node[triangledown=2] at (0,1){};
\node[dimension, right] at (0,0) {$M{\otimes_B}N$};
\end{tz}
=~~
\begin{tz}[zx]
\clip (-0.2, -0.3) rectangle (1.9,3.3);
\draw (0,0) to (0,3);
\node[dimension, right] at (0,0) {$M{\otimes_B}N$};
\end{tz}
\end{calign}
\ignore{One can convince onself that $f:M\otimes N\to M\otimes_B N$ is a coequalizer for the }
For dagger bimodules ${}_AM_B$ and ${}_BN_C$, the relative tensor product $M\otimes_B N$ is itself an $A{-}C$-dagger bimodule with the following action $A\otimes(M{\otimes_B}N) \otimes C\to M{\otimes_B}N$:
\begin{equation}
\begin{tz}[zx,every to/.style={out=up, in=down}]
\draw (0,-0.) to (0,1) ;
\draw (0,3) to (0,3.5);
\draw (-1,1) to (-1,2.5);
\draw (1,1) to (1,2.5);
\draw (-2,-0.) to [in=-135] (-1,1.75);
\draw (2,-0.) to [in=-45] (1,1.75);
\node[triangledown=2] at (0,1){};
\node[triangleup=2] at (0,2.5){};
\node[boxvertex] at (-1,1.75){};
\node[boxvertex] at (1,1.75){};
\end{tz}
\end{equation}
\noindent
\begin{definition} \label{def:daggermoritaequiv}Two special Frobenius monoids $A$ and $B$ are \textit{Morita equivalent} if there are dagger bimodules $_AM_B$ and $_BN_A$ such that $_AM{\otimes_B}N_A\cong {}_AA_A$ and ${_BN{\otimes_A}M_B \cong {}_BB_B}$.
\end{definition}
\noindent
We make use of the following result.
\begin{theorem}[{\cite[Thm. A.1]{Musto2019}}]
Let $\mathcal{C}$ be a dagger 2-category in which all dagger idempotents split and let $X: r \to s$ and $Y: r \to t$ be special 1-morphisms. Then the special Frobenius monoids $X \otimes X^*$ and $Y \otimes Y^*$ in $\End(r)$ are Morita equivalent if and only if $s$ is dagger equivalent to $t$.
\end{theorem}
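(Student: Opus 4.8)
The plan is to reduce both implications to one lemma about a single special $1$-morphism, together with transitivity of a cross-object version of the Morita relation. Call a special Frobenius monoid $A$ in $\End(r)$ and a special Frobenius monoid $B$ in $\End(r')$ \emph{Morita equivalent} if there are dagger bimodules ${}_AM_B$ and ${}_BN_A$, with underlying $1$-morphisms $M\colon r\to r'$ and $N\colon r'\to r$, such that $M\otimes_B N\cong{}_AA_A$ and $N\otimes_A M\cong{}_BB_B$; for $r=r'$ this is Definition~\ref{def:daggermoritaequiv}. Using the standard properties of the relative tensor product of dagger bimodules — associativity, the unit laws, and closure of dagger bimodules under $\otimes_B$, all available because $\mathcal{C}$ has split dagger idempotents, and established exactly as for Frobenius algebras in monoidal categories — this is an equivalence relation. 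Moreover, two \emph{trivial} monoids $\id_s$ and $\id_t$ are Morita equivalent precisely when $s$ is dagger equivalent to $t$: bimodules over trivial monoids are just $1$-morphisms, relative tensors over them are ordinary composites, and bimodule isomorphisms are unitary $2$-morphisms, so the two conditions defining Morita equivalence become exactly the data of a dagger equivalence $s\simeq t$ (with $E$ and $E^*$ the witnessing bimodules in one direction).

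Granting the lemma below, the theorem follows. For $(\Leftarrow)$, a dagger equivalence $E\colon s\to t$ makes $\id_s$ and $\id_t$ Morita equivalent, so by the lemma and transitivity $X\otimes X^*$, $\id_s$, $\id_t$ and $Y\otimes Y^*$ are all Morita equivalent; since $X\otimes X^*$ and $Y\otimes Y^*$ both lie in $\End(r)$, this is an honest Morita equivalence in the sense of Definition~\ref{def:daggermoritaequiv}. For $(\Rightarrow)$, Morita equivalence of $X\otimes X^*$ and $Y\otimes Y^*$ in $\End(r)$, combined with the lemma (used in reverse, via symmetry of the relation) and transitivity, gives a Morita equivalence of $\id_s$ and $\id_t$; composing the witnessing bimodules yields $1$-morphisms $P\colon s\to t$ and $Q\colon t\to s$ with $P\otimes Q\cong\id_s$ and $Q\otimes P\cong\id_t$ unitarily, i.e. a dagger equivalence $s\simeq t$.

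The lemma is: for every special $1$-morphism $X\colon r\to s$, with special right dual $[X^*,\tilde\eta,\tilde\epsilon]$, the special Frobenius monoid $A:=X\otimes X^*$ in $\End(r)$ is Morita equivalent to $\id_s$ in $\End(s)$. I would take the witnessing dagger bimodules to be $X$ itself and $X^*$: the left $A$-action on $X$ is $\id_X\otimes\tilde\eta^\dagger$, the right $A$-action on $X^*$ is its mate, and the $\id_s$-actions are trivial; the dagger-bimodule axioms then reduce to the snake equations and the speciality identity $\tilde\eta^\dagger\circ\tilde\eta=\id_{\id_s}$, verified by a short diagram chase. The isomorphism $X\otimes_{\id_s}X^*\cong{}_AA_A$ is immediate, since $\otimes_{\id_s}$ is ordinary composition and the induced bimodule structure is, by construction of the Frobenius structure~\eqref{eq:specialdualfrobmon}, that of $A$.

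The remaining isomorphism, $X^*\otimes_A X\cong\id_s$, is the crux and the step I expect to be the main obstacle. The relative tensor $X^*\otimes_A X$ is the splitting of a dagger idempotent on $X^*\otimes X$, and $X^*\otimes X$ is genuinely larger than $\id_s$ in general, so the idempotent is not the identity. Instead I would show, by a diagram chase using the Frobenius relations and speciality of $A$, that this idempotent equals $\tilde\eta\circ\tilde\eta^\dagger$; then, since $\tilde\eta$ is an isometry $\id_s\to X^*\otimes X$ with $\tilde\eta^\dagger\circ\tilde\eta=\id_{\id_s}$, the object $\id_s$ (with $\tilde\eta$ as its embedding) \emph{is} a splitting of that idempotent, whence $X^*\otimes_A X\cong\id_s$; finally one checks the induced $\id_s$-$\id_s$-bimodule structure is the trivial one. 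Everything else in the proof is relative-tensor bookkeeping, so this identification of the splitting idempotent with $\tilde\eta\circ\tilde\eta^\dagger$ is the essential content.
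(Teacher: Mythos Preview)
Your argument is correct. The reduction via a cross-object Morita relation and the key lemma that $X$ and $X^*$ witness a Morita equivalence between $X\otimes X^*$ and $\id_s$ is the standard 2-categorical proof, and your identification of the relative-tensor idempotent on $X^*\otimes X$ with $\tilde\eta\circ\tilde\eta^\dagger$ goes through: after inserting the Frobenius copairing of $A=X\otimes X^*$ and applying the two actions, two dagger-snake equations collapse the diagram to exactly $\tilde\eta\circ\tilde\eta^\dagger$.

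The paper itself gives no argument here; it simply cites \cite[Thm.~A.1]{Musto2019} and notes that one reads the diagrams in the opposite horizontal direction. That reference argues more directly: rather than passing through a generalised Morita relation and the trivial monoids $\id_s,\id_t$, it takes $Y^*\otimes X$ and $X^*\otimes Y$ (with the evident actions) as the candidate $A$--$B$ and $B$--$A$ dagger bimodules in $\End(r)$ and checks the two relative-tensor conditions by the same idempotent-splitting computation you isolate. Your detour through cross-object Morita equivalence is conceptually cleaner and makes transitivity do the bookkeeping, at the cost of introducing a notion not stated in the paper; the direct approach stays entirely inside $\End(r)$ and matches Definition~\ref{def:daggermoritaequiv} on the nose, at the cost of a slightly longer single computation. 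Both routes hinge on the same essential step you flagged: recognising that the separability idempotent for the relative tensor over $X\otimes X^*$ is split by the special cup $\tilde\eta$.
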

\begin{proof}
The proof is identical to that of~\cite[Thm. A.1]{Musto2019}, which classifies objects from which there is a morphism \emph{into} $a$; one need only read the diagrams from left to right rather than from right to left.
\end{proof}
\noindent
We can equate dagger equivalence of objects in $\Fun_u(\Rep(G),\Hilb)$ to a more familiar notion. 
\begin{lemma}
In $\Fun_u(\Rep(G),\Hilb)$ there exists a dagger equivalence between two objects $F_1,F_2$ iff these functors are unitarily monoidally naturally isomorphic.
\end{lemma}
\begin{proof}
For a pseudonatural transformation $(\alpha,H): F_1 \to F_2$ to be a dagger equivalence in $\Fun_u(\Rep(G),\Hilb)$, there must exist a pseudonatural transformation $(\alpha^{-1},K): G \to F$ and an unitary isomorphism $f: \mathbb{C} \to H \otimes K$. But then $H$ must be 1-dimensional, and therefore unitarily isomorphic to the unit object $\mathbb{C}$. Conjugating $(\alpha,H)$ by this isomorphism, we obtain a unitary monoidal natural isomorphism $F_1 \to F_2$. In the other direction, a unitary monoidal natural isomorphism is clearly a dagger equivalence; the weak inverse is the actual inverse and the unitary 2-morphisms witnessing the equivalence are trivial.  \end{proof}
\noindent
Putting these results together, we obtain the following classification. 
\begin{corollary}
Let $G$ be a compact quantum group and let $F: \Rep(G) \to \Hilb$ be the canonical fibre functor. There is a bijective correspondence between the following structures:
\begin{itemize}
\item Unitary monoidal natural isomorphism classes of fibre functors accessible from $F$ by a UPT.
\item Morita equivalence classes of simple Frobenius monoids in $\End(F) \cong \Rep(A_G)$.
\end{itemize}
\end{corollary}

\bibliographystyle{alphaurl}
\bibliography{bibliography}

\end{document}